\theoremstyle{thmstyleone}%
\newtheorem{theorem}{Theorem}[section]
\newtheorem{lem}{Lemma}[section]
\theoremstyle{thmstyletwo}%
\newtheorem{example}{Example}%
\newtheorem{remark}{Remark}%
\newtheorem{cor}{Corollary}[section]
\theoremstyle{thmstylethree}%
\newtheorem{definition}{Definition}[section]%
\begin{document}

\title[Article Title]{ The  Schur complements for $SDD_{1}$ matrices and their application to linear complementarity problems    }


\author[1]{\fnm{Yang} \sur{Hu}}

\author*[1,2]{\fnm{Jianzhou} \sur{Liu}}\email{liujz@xtu.edu.cn}

\author[3]{\fnm{Wenlong} \sur{Zeng}}

\affil[1]{ School of Mathematics and Computational Science, Xiangtan University, Xiangtan, 411105,
	Hunan, People’s Republic of China}

\affil[2]{ Hunan Key Laboratory for Computation and Simulation in Science and Engineering, Xiangtan
	University, Xiangtan, People’s Republic of China}
	
	\affil[3]{Department of Mathematics, Shanghai University, Shanghai 200444,  People’s Republic of China}


\abstract{ In this paper we propose a new scaling method to study the Schur complements of $SDD_{1}$  matrices. Its core is related to the non-negative property of the inverse $M$-matrix, while numerically improving the Quotient formula.  Based on the Schur complement and a novel norm splitting manner, we establish an upper bound for the infinity norm of the inverse  of $SDD_{1}$ matrices, which depends solely on the original matrix entries.  We apply the new bound to derive an error bound for linear complementarity problems of $B_{1}$-matrices. Additionally, new lower and upper bounds for the determinant of $SDD_{1}$ matrices are presented.       Numerical experiments validate the effectiveness and superiority of our results.}

\keywords{$SDD_{1}$ matrix, Determinant, Linear conplementarity problem, Schur complement}


\maketitle
\section{Introduction}\label{sec1}

The linear complementarity problem, denoted by  LCP($M,q$), consists of finding a vector  $x\in \mathbb{R}^n $ satisfying
\begin{align*}
	x\geq 0,Mx+q\geq 0,(Mx+q)^{T}x=0,
\end{align*}
or proving the nonexistence of such a solution. Here, $M=(m_{ij})\in{\mathbb R^{n\times n}}$ and  $ q\in  \mathbb{R}^n$. The LCP($M,q$) has various applications,  such as the problems of finding a Nash equilibrium point of bimatrix game, linear and quadratic programming or some free boundary problems of fluid mechanics and optimal stopping in Markov chains \cite{Berman A, Schafer,Cottle}.

\par 
The LCP($M,q$) has unique solution for any  $ q\in  \mathbb{R}^n$ if and only if $M$ is a $P$-matrix \cite{Cottle}. A matrix $M\in{\mathbb R^{n\times n}}$
is called a $P$-matrix if all its principal minors are positive. Furthermore,  $x^{*}$ solves the LCP($M,q$) if and only if  $x^{*}$ solves
\begin{align*}
	r(x):={\rm min}\{x,Mx+q\}=0,
\end{align*}
where the min operator $r(x)$ denotes the componentwise minimum of two vectors. When the involved matrix is a $P$-matrix, Chen and Xiang gave the following error bound for the LCP($M,q$) \cite{Chen X}:
\begin{align*}
	\left\|x-x^{*}\right\|_{\infty}\leq \underset{\rm d\in[0,1]^{n}}{\rm max}\left\|(I-D+DM)^{-1}\right\|_{\infty} \left\|r(x)\right\|_{\infty} ,
\end{align*}
where $x^{*}$ is the solution of the LCP($M,q$), $D=diag\{d_{1},d_{2},\dots,d_{n}\}$ over  $0\leq d_{i}\leq 1$ for  $i\in N$. Therefore, one of the important problems regarding the LCP($M,q$) is to estimate :
\begin{align}
	\underset{\rm d\in[0,1]^{n}}{\rm{max}}\left\|(I-D+DM)^{-1}\right\|_{\infty}. \label{0.1}
\end{align}\label{new 1}
Since it plays a significant role in gauging the error bound.  This involves finding an upper bound for $	\left\|M^{-1}\right\|_{\infty}$. 
Since any principal submatrix of a $P$-matrix is  nonsingular, based on this property, we can partition matrix $M$ into
\begin{align*}
	M=\left(
	\begin{array}{cc}
		A&B\\
		C&D\\
	\end{array}
	\right),
\end{align*}
where $A=(a_{ij})\in \mathbb R^{m\times m}(0<m<n)$. Denote $M/A=D-CA^{-1}B$,  we have
\begin{align*}
	XMY=\left(
	\begin{array}{cc}
		A&O  \\
		O &M/A  \\
	\end{array}
	\right),
\end{align*}
where 
\begin{align*}
	X=\left(
	\begin{array}{cc}
		I_{m}&O  \\
		-	CA^{-1} &I_{n-m}   \\
	\end{array}
	\right) \quad {\text{and}} \quad 	Y=\left(
	\begin{array}{cc}
		I_{m}&-A^{-1}B  \\
		O &I_{n-m}   \\
	\end{array}
	\right).
\end{align*}
Furthermore, we obtain that
\begin{align*}
	M^{-1}=Y\left(
	\begin{array}{cc}
		A^{-1}&O  \\
		O &(M/A)^{-1}   \\
	\end{array}
	\right)X.
\end{align*}
By the fundamental properties of the infinity norm, we have:
\begin{align}
	\left\|M^{-1}\right\|_{\infty}\leq  \left\|Y\right\|_{\infty} {\rm max} \{  	\left\|A^{-1}\right\|_{\infty},  	\left\|(M/A)^{-1}\right\|_{\infty}    \}	\left\|X\right\|_{\infty}.  \label{new2}
\end{align}
 This demonstrates that  (\ref{new2}) transforms the upper bound estimation of 	$\left\|M^{-1}\right\|_{\infty}$ into four small-scale matrix upper bounds.  Building on this framework, scholars have obtained upper bounds of $\left\|M^{-1}\right\|_{\infty}$ for certain subclasses of $P$-matrices \cite{Li C,  Sang, ZWL}. However,  the estimation provided by (\ref{new2}) exhibits significant inaccuracy, particularly when $	\left\|A^{-1}\right\|_{\infty}$ and $\left\|X\right\|_{\infty}$ are large. In this paper, we establish a strictly sharper upper bound for $\left\|M^{-1}\right\|_{\infty}$ that demonstrably outperforms (\ref{new2}):
\begin{align}
	\left\|M^{-1}\right\|_{\infty}\leq  \left\|Y\right\|_{\infty} {\rm max} \{  	\left\|A^{-1}\right\|_{\infty},  	\left\|(M/A)^{-1}\right\|_{\infty} \left\|X\right\|_{\infty}   \}. 
\end{align}\label{new 3}
\quad Now, let us formally define the Schur complement. Let $n$ be apositive integer, where $n\geq 2$, and define the set $N=\{1,2,\dots,n\}$.     For nonempty index sets $\alpha, \beta\subsetneq N$, we denote by $|\alpha|$ the cardinality  of $\alpha$ and $\overline \alpha=N\backslash \alpha$ the complement of $\alpha$ in  $N$. We write $A(\alpha, \beta)$ to mean the submatrix of $A\in \mathbb C^{n\times n}$ lying in the rows indexed by $\alpha$ and the columns indexed by $\beta$. $A(\alpha, \alpha)$ is abbreviated to $A(\alpha)$.   Assume that $A(\alpha)$ is nonsingular, the Schur complement of $A$ with respect to $A(\alpha)$, which is denoted by $A/A(\alpha)$ or simply $A/\alpha$, is defined as
\begin{equation}
	A/\alpha=A(\overline\alpha)-A(\overline\alpha,\alpha)A(\alpha)^{-1}A(\alpha,\overline\alpha).
\end{equation}
The  Schur complement is a useful tool in many fields such as control theory, numerical algebra, polynomial optimization, magnetic resonance imaging and simulation \cite{Liu SIAM, Horn, Torun,Zqq,Zfz}.   When employing Schur complement techniques to reduce matrix dimensionality, one of the important problems is whether some important properties or structures of the given matrices are inherited by the Schur complements.  It is known that the Schur complements of positive semidefinite matrices are positive semidefinite, the same is true of $M$-matrix, and $H$-matrix.

  In  2010, Pe\~{n}a introduced a novel subclass of $H$-matrices known as $SDD_{1}$ matrices \cite{Pena}.  It has significant applications in many fields such as $H$-matrix identification, determination of strong $H$-tensors, mathematical programming \cite{Dai,Li Q,Kolot, Sk, WYQ}.   Furthermore, the derived  $B_{1}$-matrix  from $SDD_1$  matrix constitutes a significant subclass of $P$-matrices. To date, no precise upper bound related to (\ref{new 1}) has been established for $B_{1}$-matrices, which motivates our work to address this fundamental gap.  Additionally, 
  Pe\~{n}a\cite{Pena} provided an example demonstrating that the Schur complement of an $SDD_{1}$ matrix is not necessarily $SDD_{1}$. This naturally raises an important question: Under what conditions does the Schur complement of an $SDD_{1}$ matrix preserve its $SDD_{1}$ property? 

In this paper, we propose a new scaling method called prior construction  and apply it to the study of the Schur complement for $SDD_{1}$ matrices.  The core of this method lies in the non-negative property of the inverse matrix of $M$-matrix. Using the prior construction method, our main contributions are as follows:
\begin{itemize}
	\item[\textbullet] When $\emptyset\neq \alpha\subsetneq N_{2}$, $A/\alpha$ remains an $SDD_{1}$ matrix.  More specifically, under this condition, the $D_{1}$ diagonal dominant degree of $A/\alpha$ is stronger than that of the original matrix in the corresponding rows, i.e.,
	\begin{align*}
		|a^{\prime}_{tt}|-P_{t}(A/\alpha)\geq |a_{j_{t}j_{t}}|-P_{j_{t}}(A)>0.
	\end{align*}
\end{itemize}

\begin{itemize}
	\item[\textbullet] When $\alpha=N_{2}$,  $A/\alpha$ is an SDD matrix. Based on the Schur complements, we give a new upper bound for the infinity norm of the inverse of an $SDD_{1}$ matrix. Building on this bound, we establish an error bound for the linear complementarity problem of $B_{1}$-matrices, which only  dependent on the entries of the original matrix. 
\end{itemize}
\begin{align*}
	\left\|A^{-1}\right\|_{\infty} \leq  \left(1+\underset{i\in N_{2}}{\rm max}\frac{P_{i}(A)}{|a_{ii}|}\right){\rm max}\{\phi,\psi\}.
\end{align*}

\begin{itemize}
	\item[\textbullet] When $N_{2}\subsetneq \alpha \subsetneq N$, we give the exact lower bounds of $|a^{\prime}_{tt}|-R_{t}(A/\alpha)$, which effectively addresses the question that the Quotient formula cannot answer. As an application, we determine an upper and a lower bounds for the determinant of the $SDD_{1}$ matrix. Notably, it is found that our result outperforms Huang' upper and lower bounds \cite{Htz} in a specific ordering.
	\begin{align*}
		\prod\limits_{i=1}^n\left(|a_{ii}|-\sum_{j=i+1}^{n}|a_{ij}|y_{j}\right)\leq |\det(A)|\leq 	\prod\limits_{i=1}^n\left(|a_{ii}|+\sum_{j=i+1}^{n}|a_{ij}|y_{j}\right).
	\end{align*}
\end{itemize}

The rest of the paper is organized as follows. In section \ref{section.2},  we present the necessary definitions and lemmas. In section \ref{section.3}, we propose the prior construction method and analyze the Schur complements of $SDD_{1}$ matrices under three distinct cases. In Section \ref{section.4}, based on the Schur complement, we establish a new upper bound for the infinity norm of the inverse of $SDD_{1}$ matrices. Moreover, we propose a new subclass of $H$-matrices, termed $S$-$SDD_{1}$ matrices. In Section \ref{section.5}, we derive error bound for the linear complementarity problem of $B_{1}$-matrices.   In Section \ref{section.6}, we derive new upper and lower bounds for the determinant of $SDD_{1}$ matrices. Numerical experiments validate the effectiveness and superiority of our proposed results.

\section{Preliminaries}\label{section.2}
To establish our principal results, we first introduce requisite definitions and preliminary lemmas. Given two matrices  $A=(a_{ij})\in \mathbb C^{n\times n}$ and $B=(b_{ij})\in  \mathbb C^{n\times n}$, $A\geq B$ means that $a_{ij}\geq b_{ij}$ for all $i,j\in N$. The modulus matrix is defined as $|A|=(|a_{ij}|)$.  Denote
\begin{align*}
	R_{i}(A)=\underset{j\in{N\backslash{\{i\}}}}{\sum}|a_{ij}|,
\end{align*}
and
\begin{align*}
 N_{1}=\{i\in{N}:|a_{ii}|\leq R_{i}(A)\}, \quad 	N_{2}=\{i\in{N}:|a_{ii}|>R_{i}(A)\}.
\end{align*}
We call $N_{1}$ as the non-diagonally dominant set of $A$ and $N_{2}$ as the diagonally dominant set of $A$.

\begin{definition} \rm
	\cite{Varga}  Let $A=(a_{ij})\in{\mathbb C^{n\times n}}$. If, for all $i\in N$
	\begin{align*}
		|a_{ii}|>R_{i}(A),
	\end{align*}
then $A$ is called a strictly diagonally dominant (SDD) matrix.\\
\end{definition}

\begin{definition}\rm \cite{Pena}  Let $A=(a_{ij})\in \mathbb C^{n\times n}$$(n\geq 2)$. If, for all $i\in N$ 
	\begin{align}
		|a_{ii}|>P_{i}(A):=\underset{j\in N_{1}\backslash \{i\}}\sum|a_{ij}|+\underset{j\in N_{2}\backslash \{i\}}\sum|a_{ij}|\frac{R_{j}(A)}{|a_{jj}|}, \label{eq 2.1}
	\end{align}
then $A$ is called an $SDD_{1}$ matrix.
\end{definition}

\begin{definition}\rm \cite{Berman A}
	A matrix $A$ is called an  $M$-matrix if there exist a nonnegative matrix $B$ and a real number $s>\rho(B)$ such that $A=sI-B$, where $\rho(B)$ is the spectral radius of $B$.
\end{definition}

\begin{definition}\rm \cite{Horn} Given a matrix $A=(a_{ij})\in \mathbb C^{n\times n}$, the comparison matrix of $A$ denoted by $< A>=(\mu_{ij})$, is defined as
	\begin{align*}
		\mu_{ij}=&\left\{\begin{aligned}\notag
			|a_{ii}|,  i=j, \\
			\thinspace
			-|a_{ij}|,i\neq{j}.
		\end{aligned}\right. 
	\end{align*}
\end{definition}

\begin{definition}\rm \cite{Horn}
	A matrix  $A$  is called an $H$-matrix if $< A>$ is an $M$-matrix.
\end{definition}

\begin{lem}\label{Lemma 2.1}\rm \cite{Varga}
	If $A$ is  an  $H$-matrix, then
	\begin{align*}
		< A>^{-1}\geq |A^{-1}|.
	\end{align*}
\end{lem}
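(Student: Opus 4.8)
The plan is to reduce the desired entrywise inequality to a Neumann-series comparison. First I would unpack the hypothesis: since $A$ is an $H$-matrix, $\langle A\rangle$ is a nonsingular $M$-matrix, so $\langle A\rangle^{-1}\ge 0$ and, because every nonsingular $M$-matrix has positive diagonal, $|a_{ii}|>0$ for all $i$. In particular $A$ is nonsingular and the diagonal part $D=\mathrm{diag}(a_{11},\dots,a_{nn})$ is invertible. Writing $A=D-N$ with $N$ having zero diagonal and $(i,j)$ entry $-a_{ij}$ for $i\ne j$, we have $\langle A\rangle=|D|-|N|$, where $|D|=\mathrm{diag}(|a_{11}|,\dots,|a_{nn}|)$ and $|N|=(|a_{ij}|)$ with zero diagonal. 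Factoring out the diagonals gives $A=D(I-J)$ with $J=D^{-1}N$, and $\langle A\rangle=|D|(I-\tilde J)$ with $\tilde J=|D|^{-1}|N|=|J|\ge 0$.

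The key step is to establish $\rho(\tilde J)<1$. The identity $\langle A\rangle=|D|-|N|$ exhibits a regular splitting of the nonsingular $M$-matrix $\langle A\rangle$ (the diagonal matrix $|D|$ has nonnegative inverse and $|N|\ge 0$); by the standard regular-splitting characterization of $M$-matrices this forces $\rho(|D|^{-1}|N|)=\rho(\tilde J)<1$. Equivalently, one may pick $u>0$ with $\langle A\rangle u>0$, deduce $\tilde J u<u$, and conclude $\rho(\tilde J)<1$. Since the spectral radius is dominated by that of the modulus matrix, $\rho(J)\le\rho(|J|)=\rho(\tilde J)<1$. Hence both Neumann series $(I-J)^{-1}=\sum_{k\ge 0}J^{k}$ and $(I-\tilde J)^{-1}=\sum_{k\ge 0}\tilde J^{k}$ converge, and the latter is a sum of nonnegative matrices, so $(I-\tilde J)^{-1}\ge 0$.

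Finally, iterating the triangle inequality for matrix products gives $|J^{k}|\le|J|^{k}=\tilde J^{k}$ for every $k\ge 0$, whence
\[
|(I-J)^{-1}|=\Bigl|\sum_{k\ge 0}J^{k}\Bigr|\le\sum_{k\ge 0}|J^{k}|\le\sum_{k\ge 0}\tilde J^{k}=(I-\tilde J)^{-1}.
\]
Multiplying on the right by the nonnegative diagonal matrix $|D^{-1}|=|D|^{-1}$ preserves the inequality, so $|A^{-1}|=|(I-J)^{-1}D^{-1}|\le|(I-J)^{-1}|\,|D|^{-1}\le(I-\tilde J)^{-1}|D|^{-1}=\langle A\rangle^{-1}$, which is the claim. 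I expect the middle paragraph to be the only real obstacle — namely, cleanly extracting the spectral bound $\rho(\tilde J)<1$ from the $M$-matrix hypothesis on $\langle A\rangle$; once that is in place, the remaining steps are routine termwise modulus estimates on absolutely convergent series.
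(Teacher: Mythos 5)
Your proof is correct: the paper does not prove this lemma itself (it is quoted from Varga), and your Jacobi-splitting argument --- writing $A=D(I-J)$, $\langle A\rangle=|D|(I-|J|)$, extracting $\rho(|J|)<1$ from the regular splitting of the nonsingular $M$-matrix $\langle A\rangle$, and comparing Neumann series termwise --- is exactly the classical (Ostrowski-style) proof found in the cited reference. The only cosmetic point is that your early claim ``in particular $A$ is nonsingular'' is really a consequence of $\rho(J)\le\rho(|J|)<1$ established later, not of the hypothesis alone, so it should be stated after the spectral bound.
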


\begin{lem} \rm \cite{Pena} The matrix classes $SDD$, $SDD_{1}$, $H$ satisfy the following inclusion relations:
	\begin{align*}
		\{SDD\}\subsetneq \{SDD_{1} \}\subsetneq \{H  \}.
	\end{align*}
	\end{lem}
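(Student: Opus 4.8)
The plan is to handle the two inclusions separately and then record the strictness witnesses. For $\{SDD\}\subseteq\{SDD_{1}\}$ I would argue straight from the definitions: if $A=(a_{ij})$ is SDD then $|a_{ii}|>R_{i}(A)$ for every $i$, hence $N_{1}=\emptyset$, $N_{2}=N$, and $R_{j}(A)/|a_{jj}|<1$ for all $j$, so
\begin{align*}
	P_{i}(A)=\sum_{j\in N\setminus\{i\}}|a_{ij}|\,\frac{R_{j}(A)}{|a_{jj}|}<\sum_{j\in N\setminus\{i\}}|a_{ij}|=R_{i}(A)<|a_{ii}|,
\end{align*}
which is exactly (\ref{eq 2.1}); thus $A\in\{SDD_{1}\}$. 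Strictness here is witnessed by a matrix with one non-dominant row, e.g. $\left(\begin{smallmatrix}2&1\\3&2\end{smallmatrix}\right)$, for which $P_{1}=1<2$ and $P_{2}=\tfrac{3}{2}<2$ although its second row is not strictly dominant.

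For $\{SDD_{1}\}\subseteq\{H\}$ my approach is to exhibit a positive vector $x$ with $\langle A\rangle x>0$. Because the $SDD_{1}$ hypothesis forces $|a_{ii}|>P_{i}(A)\ge 0$, the comparison matrix $\langle A\rangle$ has positive diagonal and nonpositive off-diagonal entries, so the existence of such an $x$ is equivalent to $\langle A\rangle$ being a nonsingular $M$-matrix, i.e. to $A$ being an $H$-matrix (equivalently, $\langle A\rangle\,\mathrm{diag}(x)$ is then SDD). Writing $(\langle A\rangle x)_{i}=|a_{ii}|x_{i}-\sum_{j\ne i}|a_{ij}|x_{j}$, the natural first guess is $x_{i}=1$ on $N_{1}$ and $x_{i}=R_{i}(A)/|a_{ii}|$ on $N_{2}$: for $i\in N_{1}$ this gives $(\langle A\rangle x)_{i}=|a_{ii}|-P_{i}(A)>0$ immediately, while for $i\in N_{2}$ it gives
\begin{align*}
	(\langle A\rangle x)_{i}=R_{i}(A)-P_{i}(A)=\sum_{j\in N_{2}\setminus\{i\}}|a_{ij}|\Big(1-\frac{R_{j}(A)}{|a_{jj}|}\Big)\ge 0 .
\end{align*}

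The step I expect to be the actual obstacle is that this last inequality need not be strict: if a row $i\in N_{2}$ has nonzero entries only into $N_{1}$ then $R_{i}(A)=P_{i}(A)$ and the candidate produces a zero component (and there is the degenerate case $R_{i}(A)=0$ as well). To repair this I would perturb the scaling on $N_{2}$ to $x_{i}=R_{i}(A)/|a_{ii}|+\varepsilon$ (and $x_{i}=\varepsilon$ when $R_{i}(A)=0$), with a single $\varepsilon>0$ chosen last. Then for $i\in N_{2}$ the component becomes $\big(R_{i}(A)-P_{i}(A)\big)+\varepsilon\big(|a_{ii}|-\sum_{j\in N_{2}\setminus\{i\}}|a_{ij}|\big)$, whose $\varepsilon$-coefficient is at least $|a_{ii}|-R_{i}(A)>0$, hence the whole expression is strictly positive for every $\varepsilon>0$; for $i\in N_{1}$ it becomes $\big(|a_{ii}|-P_{i}(A)\big)-\varepsilon\sum_{j\in N_{2}}|a_{ij}|$, which stays positive once $\varepsilon$ is small enough (uniformly over the finitely many rows of $N_{1}$). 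Fixing such an $\varepsilon$ yields $x>0$ with $\langle A\rangle x>0$, so $A\in\{H\}$. Finally, strictness of $\{SDD_{1}\}\subsetneq\{H\}$ follows from an $H$-matrix on which this canonical scaling genuinely fails; for instance $A=\left(\begin{smallmatrix}1/2&1&0\\0&2&3\\1/4&0&6\end{smallmatrix}\right)$ satisfies $A\,\mathrm{diag}(4,1,\tfrac13)=\left(\begin{smallmatrix}2&1&0\\0&2&1\\1&0&2\end{smallmatrix}\right)$, which is SDD, so $A$ is an $H$-matrix, while its first row has $P_{1}(A)=1>\tfrac12=|a_{11}|$, so $A\notin\{SDD_{1}\}$; explicit examples of this kind are also given in \cite{Pena}.
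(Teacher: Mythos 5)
This lemma is quoted from Pe\~{n}a's paper and the manuscript itself gives no proof, so there is nothing internal to compare against; judged on its own, your argument is correct and self-contained. The first inclusion is immediate from the definitions as you say (one cosmetic nit: when $R_{i}(A)=0$ the displayed middle inequality $P_{i}(A)<R_{i}(A)$ degenerates to an equality, but the conclusion $|a_{ii}|>P_{i}(A)$ still follows from $R_{i}(A)<|a_{ii}|$), and your $2\times 2$ witness for strictness checks out. For $\{SDD_{1}\}\subseteq\{H\}$ you use the standard characterization that a $Z$-matrix $\langle A\rangle$ with positive diagonal is a nonsingular $M$-matrix iff $\langle A\rangle x>0$ for some $x>0$ (equivalently $\langle A\rangle\,\mathrm{diag}(x)$ is SDD), and your $\varepsilon$-perturbed scaling $x_{i}=1$ on $N_{1}$, $x_{i}=R_{i}(A)/|a_{ii}|+\varepsilon$ on $N_{2}$ is computed correctly: on $N_{2}$ the component is $(R_{i}(A)-P_{i}(A))+\varepsilon\bigl(|a_{ii}|-\sum_{j\in N_{2}\setminus\{i\}}|a_{ij}|\bigr)\geq\varepsilon\bigl(|a_{ii}|-R_{i}(A)\bigr)>0$, and on $N_{1}$ it is $(|a_{ii}|-P_{i}(A))-\varepsilon R^{N_{2}}_{i}(A)>0$ for $\varepsilon$ small, so the repair you anticipated is exactly what is needed. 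This is essentially Pe\~{n}a's original scaling argument, and it is also the same flavor of $\varepsilon$-dependent diagonal scaling that reappears in the paper's Lemma 5.3; your $3\times 3$ example of an $H$-matrix with $P_{1}(A)=1>\tfrac12=|a_{11}|$ correctly settles strictness of the second inclusion.
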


\begin{lem}\label{Lemma 2.2}\rm
	Let $A=(a_{ij})\in \mathbb C^{n\times n}$, $x,y\in \mathbb C^{n}$. If $A\geq 0$ and $x\leq y$, then $Ax\leq Ay$.
\end{lem}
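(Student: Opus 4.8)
The plan is to reduce the claim to the elementary fact that a sum of products of nonnegative real numbers is nonnegative, using only the linearity of the map $v\mapsto Av$. First I would introduce the difference vector $z:=y-x$; by the hypothesis $x\leq y$, interpreted entrywise, every component satisfies $z_{j}\geq 0$, so $z\geq 0$. Note that the relation $A\geq 0$ forces each $a_{ij}$ to be a nonnegative real number, so all quantities below are real and the order relations make sense entrywise.

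Next I would examine $Az$ componentwise. For each $i\in N$,
\begin{align*}
	(Az)_{i}=\sum_{j=1}^{n}a_{ij}z_{j},
\end{align*}
and since every $a_{ij}\geq 0$ while every $z_{j}\geq 0$, each summand $a_{ij}z_{j}$ is nonnegative; hence $(Az)_{i}\geq 0$. As $i$ was arbitrary, $Az\geq 0$.

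Finally, I would invoke the linearity of matrix--vector multiplication: $Ay-Ax=A(y-x)=Az\geq 0$, which is precisely $Ax\leq Ay$. There is essentially no obstacle in this argument; the only point deserving a word of care is the consistent entrywise reading of the relations $A\geq 0$, $x\leq y$, and the conclusion $Az\geq 0$, so that nonnegativity is preserved under the sums and products appearing in $(Az)_{i}$. This monotonicity statement is kept in this minimal form because it will serve repeatedly as the workhorse behind the ``prior construction'' scaling estimates in Section~\ref{section.3}.
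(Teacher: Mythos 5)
Your argument is correct and is essentially the same as the paper's own proof: both pass to the difference vector $y-x$, observe componentwise that $\sum_{j}a_{ij}(y_{j}-x_{j})\geq 0$ by nonnegativity of the entries and of the difference, and conclude $Ay\geq Ax$ by linearity. No gap to report.
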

\begin{proof}
	Denote $x=(x_{1},x_{2},\dots,x_{n})^{T}$, $y=(y_{1},y_{2},\dots,y_{n})^{T}$. Obviously $x_{i} \leq y_{i}$,  for all $i\in N$. Let
	\begin{align*}
		z = A(y - x)=(z_{1},z_{2},\dots,z_{n})^{T}.
	\end{align*}
	Then, for each $i\in N$,
	\begin{align*}
		z_{i} = \sum_{j=1}^{n} a_{ij} (y_{j} - x_{j}) \geq 0.
	\end{align*}
	Thus, we have $A(y - x) \geq 0$, which implies $Ay \geq Ax$. The proof is completed.
\end{proof}
\begin{lem}\rm  \label{Lemma 2.3} \cite{Zfz} 
	(Quotient formula) Let $A$ be a square matrix. If $B$ is a proper nonsingular principal submatrix of $A$ and $C$ is a proper nonsingular principal submatrix of $B$, then $B/C$ is a proper nonsingular principal submatrix of $A/C$, and
	\begin{align*}
		A/B=(A/C)/(B/C).
	\end{align*}
\end{lem}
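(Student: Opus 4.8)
The plan is to carry out the standard two-stage block reduction underlying Schur complements. Put $C=A(\gamma)$ and $B=A(\beta)$ with $\emptyset\neq\gamma\subsetneq\beta\subsetneq N$, and partition $A$ conformally with the ordered splitting of $N$ into $\gamma$, $\beta\setminus\gamma$ and $N\setminus\beta$:
\[
A=\begin{pmatrix}A_{11}&A_{12}&A_{13}\\ A_{21}&A_{22}&A_{23}\\ A_{31}&A_{32}&A_{33}\end{pmatrix},\qquad A_{11}=C,\quad \begin{pmatrix}A_{11}&A_{12}\\ A_{21}&A_{22}\end{pmatrix}=B.
\]
Applying the definition of the Schur complement with $\alpha=\gamma$ gives
\[
A/C=\begin{pmatrix}A_{22}-A_{21}A_{11}^{-1}A_{12}&A_{23}-A_{21}A_{11}^{-1}A_{13}\\ A_{32}-A_{31}A_{11}^{-1}A_{12}&A_{33}-A_{31}A_{11}^{-1}A_{13}\end{pmatrix},
\]
whose leading principal block, indexed by $\beta\setminus\gamma\subsetneq\overline\gamma$, is precisely $A_{22}-A_{21}A_{11}^{-1}A_{12}=B/C$. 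Thus $B/C$ is a proper principal submatrix of $A/C$, and it is nonsingular because the classical Schur determinant identity \cite{Horn} gives $\det B=\det C\cdot\det(B/C)$ with $\det B\neq 0\neq\det C$.

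With this in hand I would compute both sides of the asserted identity and match them. Writing $A/C$ in $2\times 2$ block form with $(1,1)$-block $S:=B/C$ and the other three blocks read off above, the definition gives
\[
(A/C)/(B/C)=\bigl(A_{33}-A_{31}A_{11}^{-1}A_{13}\bigr)-\bigl(A_{32}-A_{31}A_{11}^{-1}A_{12}\bigr)S^{-1}\bigl(A_{23}-A_{21}A_{11}^{-1}A_{13}\bigr).
\]
On the other hand $A/B=A_{33}-\begin{pmatrix}A_{31}&A_{32}\end{pmatrix}B^{-1}\begin{pmatrix}A_{13}\\ A_{23}\end{pmatrix}$, into which I would substitute the Banachiewicz inversion formula for a block matrix,
\[
B^{-1}=\begin{pmatrix}A_{11}^{-1}+A_{11}^{-1}A_{12}S^{-1}A_{21}A_{11}^{-1}&-A_{11}^{-1}A_{12}S^{-1}\\ -S^{-1}A_{21}A_{11}^{-1}&S^{-1}\end{pmatrix}.
\]
Expanding the triple product and regrouping, both $(A/C)/(B/C)$ and $A/B$ are seen to equal
\begin{align*}
A_{33}&{}-A_{31}A_{11}^{-1}A_{13}-A_{32}S^{-1}A_{23}+A_{32}S^{-1}A_{21}A_{11}^{-1}A_{13}\\
&{}+A_{31}A_{11}^{-1}A_{12}S^{-1}A_{23}-A_{31}A_{11}^{-1}A_{12}S^{-1}A_{21}A_{11}^{-1}A_{13},
\end{align*}
which establishes $A/B=(A/C)/(B/C)$.

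The only genuine effort is the bookkeeping in this last step: one must match the handful of cross-terms produced by the Banachiewicz expansion of $A_{33}-\begin{pmatrix}A_{31}&A_{32}\end{pmatrix}B^{-1}\begin{pmatrix}A_{13}\\ A_{23}\end{pmatrix}$ with those produced by expanding $\bigl(A_{32}-A_{31}A_{11}^{-1}A_{12}\bigr)S^{-1}\bigl(A_{23}-A_{21}A_{11}^{-1}A_{13}\bigr)$, but this is routine once both are written out. A shorter though less self-contained alternative, worth recording as a remark, covers the case $\det A\neq 0$ through the identity $A/\alpha=\bigl(A^{-1}(\overline\alpha)\bigr)^{-1}$: since then $(A/\gamma)^{-1}=A^{-1}(\overline\gamma)$, taking the principal submatrix indexed by $N\setminus\beta\subseteq\overline\gamma$ and inverting yields $(A/\gamma)/(\beta\setminus\gamma)=\bigl(A^{-1}(\overline\beta)\bigr)^{-1}=A/\beta$; the general possibly singular case then follows by applying this to $A+\varepsilon I$ for small $\varepsilon>0$ and letting $\varepsilon\to 0$, as all the quantities involved are rational functions of the entries of $A$ agreeing on a dense set.
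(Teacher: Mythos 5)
The paper gives no proof of this lemma at all: it is quoted verbatim from Zhang's monograph \cite{Zfz}, so there is no internal argument to compare against, and your write-up has to stand on its own. It does. The three-block partition correctly identifies $B/C$ as the principal block of $A/C$ indexed by $\beta\setminus\gamma$, its nonsingularity follows from Schur's determinant formula (the paper's Lemma 2.4 applied to $B$), and expanding $A/B=A_{33}-\begin{pmatrix}A_{31}&A_{32}\end{pmatrix}B^{-1}\begin{pmatrix}A_{13}\\ A_{23}\end{pmatrix}$ via the Banachiewicz form of $B^{-1}$ produces exactly the six terms you list, which are the same six terms obtained by expanding $(A/C)/(B/C)$; I verified the cross terms and they match, so the identity is established. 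Two minor remarks: the lemma speaks of arbitrary principal submatrices, so the "conformal ordered splitting" tacitly invokes a simultaneous row-and-column permutation putting $\gamma$, $\beta\setminus\gamma$, $N\setminus\beta$ in leading position — harmless, since the Schur complement with respect to an index set is unaffected by such a relabelling, but worth one explicit sentence. The closing remark via $A/\alpha=\bigl(A^{-1}(\overline\alpha)\bigr)^{-1}$ plus an $\varepsilon$-perturbation is also sound (all quantities are rational in the entries and continuous at $\varepsilon=0$ because $B$, $C$ and $B/C$ are nonsingular there), though it is redundant given the self-contained computation.
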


\begin{lem}\rm \label{Lemma 2.4} \cite{Zfz}
	(Schur’s   formula) Let $A=(a_{ij})\in \mathbb C^{n\times n}$, if
	\begin{align*}
		A=\left(
	\begin{array}{cc}
		A(\alpha)&A(\alpha,\overline \alpha)\\
		A(\overline \alpha,\alpha)&A(\overline \alpha)\\
	\end{array}
	\right),
	\end{align*}
where $A(\alpha)$ is a nonsingular leading principal submatrix of $A$, then 
\begin{align*}
	det(A)=det\left(A(\alpha)\right)\times det(A/\alpha).
\end{align*}
\end{lem}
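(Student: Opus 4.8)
The plan is to reduce $\det(A)$ to the determinant of a block upper-triangular matrix, mirroring the block factorization of $M$ already used in the Introduction. Since the leading block $A(\alpha)$ is nonsingular, I would first record the identity
\begin{align*}
	A=\left(
	\begin{array}{cc}
		I&O\\
		A(\overline\alpha,\alpha)A(\alpha)^{-1}&I\\
	\end{array}
	\right)
	\left(
	\begin{array}{cc}
		A(\alpha)&A(\alpha,\overline\alpha)\\
		O&A(\overline\alpha)-A(\overline\alpha,\alpha)A(\alpha)^{-1}A(\alpha,\overline\alpha)\\
	\end{array}
	\right),
\end{align*}
which is verified by direct block multiplication; by the definition of the Schur complement the lower-right block of the second factor is precisely $A/\alpha$.

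Next I would take determinants on both sides and invoke the multiplicativity of the determinant. The first factor is block lower triangular with identity diagonal blocks, so its determinant equals $1$; the second factor is block upper triangular, so its determinant equals $\det(A(\alpha))\cdot\det(A/\alpha)$. Putting these together yields $\det(A)=\det(A(\alpha))\cdot\det(A/\alpha)$, which is the claimed formula.

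The only ingredient beyond routine algebra is the classical fact that the determinant of a block-triangular matrix equals the product of the determinants of its diagonal blocks, which follows from Laplace expansion along the zero block (or from a further decomposition into elementary block row/column operations). Because $A(\alpha)$ is a \emph{leading} principal submatrix, no permutation of rows or columns is needed, so no sign bookkeeping arises and there is no substantive obstacle here — the argument is essentially verification.
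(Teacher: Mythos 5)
Your proof is correct: the block factorization of $A$ into a unit lower-triangular factor and a block upper-triangular factor with diagonal blocks $A(\alpha)$ and $A/\alpha$, followed by multiplicativity of the determinant, is the standard argument for Schur's formula. The paper itself does not prove this lemma — it is quoted from the reference on Schur complements — but your argument is exactly the classical one and is consistent with the block decomposition $XAY=\mathrm{diag}(A(\alpha),A/\alpha)$ that the paper uses in the Introduction and in Section 4, so there is nothing to add.
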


\begin{lem}\rm \label{Lemma 2.5} If $a>b>0$ and $c\geq 0$, then $\frac{b+c}{a+c}\geq \frac{b}{a}$.
	\end{lem}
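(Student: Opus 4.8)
The plan is to reduce the claimed inequality to the manifestly true statement $c(a-b)\ge 0$ by clearing denominators. First I would observe that both denominators are strictly positive: $a>0$ holds by hypothesis, and $a+c>0$ since $a>0$ and $c\ge 0$. Hence multiplying the target inequality $\frac{b+c}{a+c}\ge \frac{b}{a}$ through by the positive quantity $a(a+c)$ preserves its direction, so it is equivalent to $a(b+c)\ge b(a+c)$.

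Next I would expand both sides to get $ab+ac\ge ab+bc$, i.e. $ac\ge bc$, which rearranges to $c(a-b)\ge 0$. Since $c\ge 0$ and $a-b>0$ by hypothesis, this product is nonnegative, and running the chain of equivalences backwards establishes the claim. The only step meriting any attention — and it is a very minor one — is confirming the positivity of the denominators before cross-multiplying, so that no inequality sign is inadvertently reversed; beyond that there is no real obstacle.
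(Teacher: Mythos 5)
Your proof is correct: the denominators $a$ and $a+c$ are positive, so cross-multiplication legitimately reduces the inequality to $c(a-b)\geq 0$, which holds by hypothesis. The paper states this lemma without proof, and your argument is exactly the standard elementary verification one would supply.
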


\begin{lem}\label{Lemma 2.6}\rm
	\cite{ZWL} Let $\gamma>0$ and $\eta\geq 0$. Then for any $x\in[0,1]$, we have
	\begin{flalign*}
		\begin{split}
			&({\rm i}) \frac{\eta x}{1-x+\gamma x}\leq \frac{\eta}{\gamma};\\
			&({\rm ii}) {\rm min\{\gamma,1\}}\leq 1-x+\gamma x\leq {\rm max\{\gamma,1\}}.
		\end{split}&
	\end{flalign*}
\end{lem}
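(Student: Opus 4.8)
The plan is to prove the two inequalities independently, each by a short one-variable argument, disposing first of the degenerate cases in (i).

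For (i), I would first observe that if $\eta=0$ or $x=0$ the left-hand side vanishes while $\eta/\gamma\ge 0$, so nothing is to be proved; hence I may assume $\eta>0$ and $x\in(0,1]$. The only fact I need about the denominator is the elementary inequality $1-x+\gamma x\ge \gamma x$, which is just $1-x\ge 0$ rewritten; combined with $\gamma x>0$ this shows the denominator is positive, and then
\[
\frac{\eta x}{1-x+\gamma x}\le \frac{\eta x}{\gamma x}=\frac{\eta}{\gamma},
\]
as required.

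For (ii), I would set $g(x):=1-x+\gamma x=1+(\gamma-1)x$, which is affine in $x$ with $g(0)=1$ and $g(1)=\gamma$. Since an affine function on a compact interval attains its extrema at the endpoints, for every $x\in[0,1]$ we have $\min\{g(0),g(1)\}\le g(x)\le \max\{g(0),g(1)\}$, that is, $\min\{1,\gamma\}\le 1-x+\gamma x\le \max\{1,\gamma\}$. Equivalently, one can split into the cases $\gamma\ge 1$ (so $(\gamma-1)x$ ranges over $[0,\gamma-1]$) and $\gamma<1$ (so $(\gamma-1)x$ ranges over $[\gamma-1,0]$) and read off the bounds directly.

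I do not expect any genuine obstacle; the argument is entirely elementary. The one point worth a moment's care is in (i): before manipulating the fraction one must know the denominator is strictly positive, which is exactly the step $1-x+\gamma x\ge \gamma x>0$ used above (equivalently it follows from the lower bound in (ii), since $\min\{\gamma,1\}>0$), so if one prefers, part (ii) can be proved first and then invoked to justify the division in part (i).
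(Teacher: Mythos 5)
Your argument is correct: the degenerate cases in (i) are handled properly, the key step $1-x+\gamma x\ge \gamma x>0$ for $x\in(0,1]$ justifies the division, and the affine-endpoint argument for (ii) is exactly the standard one. The paper itself only cites this lemma from the literature without reproducing a proof, and your elementary derivation is precisely what that cited proof amounts to, so nothing further is needed.
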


\section{Prior construction method for Schur complement} \label{section.3}
 For the sake of convenience, we supplement some symbols.  Given $A\in{\mathbb C^{n\times n}}$  and  $S\subseteq N$, denote 
 \begin{align*}
 	R^{S}_{i}(A)=\underset{S\backslash \{i\}}\sum |a_{ij}|, \quad Q^{S}_{i}(A)=\underset{j\in S\backslash \{i\}}\sum |a_{ij}|\frac{R_{j}(A)}{|a_{jj}|}.
 \end{align*}

\noindent   Specifically, $R^{N}_{i}(A)=R_{i}(A)$, and $	P_{i}(A)=R^{N_{1}}_{i}(A)+Q^{N_{2}}_{i}(A)$.   
Take $\emptyset \neq \alpha \subsetneq N$, we always label $\alpha=\{i_{1},i_{2},\dots,i_{k}\}$ and $\overline \alpha=\{j_{1},j_{2},\dots,j_{l}\}$ with
\begin{align*}
	i_{1}<i_{2}<\dots<i_{k},\quad j_{1}<j_{2}<\dots<j_{l},
	\end{align*}
and $k+l=n$. For any $t\in N$, denote
\begin{equation*}
	\begin{aligned}
		\begin{array}{c}
			a_{t\alpha}=(a_{ti_{1}},a_{ti_{2}},\dots,a_{ti_{k}}),\quad a_{\alpha t}=(a_{i_{1}t},a_{i_{2}t},\dots,a_{i_{k}t})^{T},\\
			\\
			|a_{t\alpha}|=(|a_{ti_{1}}|,|a_{ti_{2}}|,\dots,|a_{ti_{k}}|), \quad |a_{\alpha t}|=(|a_{i_{1}t}|,|a_{i_{2}t}|,\dots,|a_{i_{k}t}|)^{T}.
		\end{array}
	\end{aligned}
\end{equation*}
If $A(\alpha)$ is nonsingular, we denote $A/\alpha=(a^{\prime}_{tu})$. Then for any entry $a^{\prime}_{tu}$ of $A/\alpha$, there is an expression
\begin{align*}
	a^{\prime}_{tu}=a_{j_{t}j_{u}}-a_{j_{t}\alpha}A(\alpha)^{-1}a_{\alpha j_{u}}.
\end{align*}
If  $A(\alpha)$ is an  $H$-matrix, we take
\begin{align}
	\Delta_{j_{t}j_{u}}=|a_{j_{t}\alpha}|< A(\alpha)>^{-1}|a_{\alpha j_{u}}|.\label{eq3.1}
\end{align}
By the absolute value inequality and Lemma \ref{Lemma 2.1},  we obtain that 
\begin{align*}
	|a_{j_{t}j_{u}}|-\Delta_{j_{t}j_{u}}&\leq |a_{j_{t}j_{u}}|-|a_{j_{t}\alpha}||A(\alpha)^{-1}||a_{\alpha j_{u}}|\leq|a^{\prime}_{tu}|\\
	&\leq |a_{j_{t}j_{u}}|+|a_{j_{t}\alpha}||A(\alpha)^{-1}||a_{\alpha j_{u}}|\leq|a_{j_{t}j_{u}}|+\Delta_{j_{t}j_{u}} ,
\end{align*}
i.e.,
\begin{align}
	|a_{j_{t}j_{u}}|-\Delta_{j_{t}j_{u}}\leq |a^{\prime}_{tr}|\leq |a_{j_{t}j_{u}}|+\Delta_{j_{t}j_{u}}. \label{eq3.2}
\end{align}
Next, we give an important Lemma that runs through this paper.
\begin{lem}\rm \label{Lemma 3.1} Let  $A=(a_{ij})\in \mathbb C^{n\times n}$,  $\emptyset \neq \alpha\subseteq N_{2}$ and $|\alpha|<n$, $x=(x_{1},x_{2},\dots.x_{k})^{T}$. If, for all $t\in\{1,2,\dots,k\}$, $0\leq x_{t}\leq R^{\overline \alpha}_{i_{t}}(A)$, then
	\begin{align*}
		< A(\alpha)>^{-1}x\leq& (\frac{x_{1}+Q^{\alpha}_{i_{1}}(A)}{|a_{i_{1}i_{1}}|},\frac{x_{2}+Q^{\alpha}_{i_{2}}(A)}{|a_{i_{2}i_{2}}|},\dots.\frac{x_{k}+Q^{\alpha}_{i_{k}}(A)}{|a_{i_{k}i_{k}}|})^{T}\\
		\leq&(\frac{R_{i_{1}}(A)}{|a_{i_{1}i_{1}}|},\frac{R_{i_{2}}(A)}{|a_{i_{2}i_{2}}|},\dots,\frac{R_{i_{k}}(A)}{|a_{i_{k}i_{k}}|})^{T}.
	\end{align*}
	\end{lem}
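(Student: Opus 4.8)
The plan is to reduce the vector inequality to a scalar $M$-matrix inequality and then verify that inequality componentwise. Since $\emptyset\neq\alpha\subseteq N_{2}$, every row index $i_{t}\in\alpha$ satisfies $|a_{i_{t}i_{t}}|>R_{i_{t}}(A)\geq 0$, so $A(\alpha)$ is $SDD$, hence an $H$-matrix, and $\langle A(\alpha)\rangle$ is a nonsingular $M$-matrix with $\langle A(\alpha)\rangle^{-1}\geq 0$. By Lemma~\ref{Lemma 2.2}, to prove $\langle A(\alpha)\rangle^{-1}x\leq v$ for the candidate vector $v=\left(\frac{x_{1}+Q^{\alpha}_{i_{1}}(A)}{|a_{i_{1}i_{1}}|},\dots,\frac{x_{k}+Q^{\alpha}_{i_{k}}(A)}{|a_{i_{k}i_{k}}|}\right)^{T}$, it suffices to show $x\leq \langle A(\alpha)\rangle v$, because multiplying by the nonnegative matrix $\langle A(\alpha)\rangle^{-1}$ preserves the inequality and $\langle A(\alpha)\rangle^{-1}\langle A(\alpha)\rangle v=v$. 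This is the key move: replace the inverse by the matrix itself, which is exactly the ``prior construction'' idea advertised in the introduction.

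For the first inequality I would compute the $t$-th component of $\langle A(\alpha)\rangle v$ directly. Writing $v_{s}=\frac{x_{s}+Q^{\alpha}_{i_{s}}(A)}{|a_{i_{s}i_{s}}|}$, the $t$-th entry of $\langle A(\alpha)\rangle v$ equals
\begin{align*}
	|a_{i_{t}i_{t}}|\,v_{t}-\sum_{s\neq t}|a_{i_{t}i_{s}}|\,v_{s}
	=x_{t}+Q^{\alpha}_{i_{t}}(A)-\sum_{s\neq t}|a_{i_{t}i_{s}}|\frac{x_{s}+Q^{\alpha}_{i_{s}}(A)}{|a_{i_{s}i_{s}}|}.
\end{align*}
Using $x_{s}\leq R^{\overline\alpha}_{i_{s}}(A)$ and $Q^{\alpha}_{i_{s}}(A)=\sum_{u\in\alpha\setminus\{i_{s}\}}|a_{i_{s}u}|\frac{R_{i_{u}}(A)}{|a_{uu}|}$... more cleanly, note that $x_{s}+Q^{\alpha}_{i_{s}}(A)\leq R^{\overline\alpha}_{i_{s}}(A)+Q^{\alpha}_{i_{s}}(A)$, and since $R_{i_{s}}(A)=R^{\overline\alpha}_{i_{s}}(A)+R^{\alpha}_{i_{s}}(A)$ while $\alpha\subseteq N_{2}$ gives $R^{\alpha}_{i_{s}}(A)=\sum_{u\in\alpha\setminus\{i_{s}\}}|a_{i_{s}u}|\geq\sum_{u\in\alpha\setminus\{i_{s}\}}|a_{i_{s}u}|\frac{R_{i_{u}}(A)}{|a_{uu}|}=Q^{\alpha}_{i_{s}}(A)$ (because $u\in\alpha\subseteq N_{2}$ forces $\frac{R_{i_{u}}(A)}{|a_{uu}|}<1$), we get $x_{s}+Q^{\alpha}_{i_{s}}(A)\leq R_{i_{s}}(A)$, hence $v_{s}\leq\frac{R_{i_{s}}(A)}{|a_{i_{s}i_{s}}|}$; this simultaneously delivers the \emph{second} inequality of the lemma. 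Substituting the bound $\frac{x_{s}+Q^{\alpha}_{i_{s}}(A)}{|a_{i_{s}i_{s}}|}\leq\frac{R_{i_{s}}(A)}{|a_{i_{s}i_{s}}|}$ into the displayed expression and recognizing that $\sum_{s\neq t}|a_{i_{t}i_{s}}|\frac{R_{i_{s}}(A)}{|a_{i_{s}i_{s}}|}$ is precisely $Q^{\alpha}_{i_{t}}(A)$, the two $Q^{\alpha}_{i_{t}}(A)$ terms cancel and we are left with $\big(\langle A(\alpha)\rangle v\big)_{t}\geq x_{t}$, which is what is needed.

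I expect the main obstacle to be bookkeeping rather than any conceptual difficulty: one must keep the index sets straight (the rows of $A(\alpha)$ are re-indexed $1,\dots,k$ but correspond to the original indices $i_{1},\dots,i_{k}\in N$, and $Q^{\alpha}_{i_{s}}$ sums over $\alpha\setminus\{i_{s}\}$ in the \emph{original} labelling), and one must use the hypothesis $\alpha\subseteq N_{2}$ in two places — once to know $A(\alpha)$ is $SDD$ (so that $\langle A(\alpha)\rangle^{-1}\geq 0$ and the reduction via Lemma~\ref{Lemma 2.2} is legitimate), and once to know $R_{i_{u}}(A)/|a_{uu}|<1$ for $u\in\alpha$, which is what makes $Q^{\alpha}_{i_{s}}(A)\leq R^{\alpha}_{i_{s}}(A)$ and hence closes the estimate. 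No appeal to the Quotient formula or to $SDD_{1}$-ness of the whole matrix is required; only elementary properties of $M$-matrices and the definitions of $R^{S}_{i}$, $Q^{S}_{i}$ enter.
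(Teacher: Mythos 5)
Your proposal is correct and follows essentially the same route as the paper's proof: construct the candidate vector $v$, verify componentwise that $x\leq \langle A(\alpha)\rangle v$ (using $\alpha\subseteq N_{2}$ both for $\langle A(\alpha)\rangle^{-1}\geq 0$ and for $v_{s}\leq R_{i_{s}}(A)/|a_{i_{s}i_{s}}|$), and conclude via Lemma~\ref{Lemma 2.2}. Your write-up even makes explicit the step $x_{s}+Q^{\alpha}_{i_{s}}(A)\leq R_{i_{s}}(A)$ that the paper only asserts.
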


\begin{proof}
	Let $y=(y_{1},y_{2},\dots, y_{k})^{T}$, where \begin{align*}y_{t}=\frac{x_{t}+Q^{\alpha}_{i_{t}}(A)}{|a_{i_{t}i_{t}}|}\leq \frac{R_{i_{t}}(A)}{|a_{i_{t}i_{t}}|}.
		\end{align*}
	  Since $\emptyset \neq \alpha\subseteq N_{2}$, then $A(\alpha)$ is an SDD matrix, it is an $H$-matrix, thus $< A(\alpha)>^{-1}\geq 0$. we first demonstrate
	\begin{align*}
		x\leq < A(\alpha) > y.
		\end{align*}
		For any $t\in\{1,2,\dots,k\}$, consider $t$-th component of $< A(\alpha)> y$ as follows:
		\begin{align*}
	|a_{i_{t}i_{t}}|y_{t}-\underset{i_{u}\in \alpha\backslash \{i_{t}\}}\sum|a_{i_{t}i_{u}}|y_{u}\geq& x_{t}+Q^{\alpha}_{i_{t}}(A)-\underset{i_{u}\in \alpha\backslash \{i_{t}\}}\sum|a_{i_{t}i_{u}}|\frac{R_{i_{u}}(A)}{|a_{i_{u}i_{u}}|}\\
	=&x_{t}+Q^{\alpha}_{i_{t}}(A)-Q^{\alpha}_{i_{t}}(A)=x_{t}.		
		\end{align*}
	Due to the arbitrariness of $t$, it follows that $	x\leq  < A(\alpha)> y$. According to Lemma \ref{Lemma 2.2}, we  have $<A(\alpha)>^{-1}x\leq y$. The proof is completed.         	
	\end{proof}
	\begin{remark}\rm 
		It can be observed that in Lemma \ref{Lemma 3.1}, the key to the proof lies in constructing a fictitious positive vector $y$  such that $x$ is dominated by $<A(\alpha)>y$. Subsequently, by leveraging the non-negativity of $<A(\alpha)>^{-1}$ and Lemma \ref{Lemma 2.6}, we obtain the final estimate. Furthermore, the tighter the construction of the vector $y$, the more effective the bounding becomes. We refer to this procedure as  “ prior construction".
	\end{remark}

	For the sake of our topic, in the remaining part of this paper, unless otherwise specified, we always assume that the matrix $A$  under discussion satisfies the condition that both $N_{1}$  and  $N_{2}$ are non empty subsets.

\begin{lem}\rm \label{Lemma 3.2}
	Let  $A=(a_{ij})\in \mathbb C^{n\times n}$  with $\emptyset \neq \alpha \subseteq N_{2}$. Then the following hold:\\
	(i)For any $j_{t}\in \overline \alpha$, we have 
	\begin{align*}
 |a^{\prime}_{tt}|-R_{t}(A/\alpha)\geq |a_{j_{t}j_{t}}|-R^{\overline \alpha}_{j_{t}}(A)-Q^{\alpha}_{j_{t}}(A)\geq |a_{j_{t}j_{t}}|-R_{j_{t}}(A).
	\end{align*} 
	(ii) If $\emptyset \neq \alpha \subsetneq N_{2}$, then for any $j_{t}\in N_{2}\backslash \alpha$, we have 
	\begin{align*}
		\frac{R_{t}(A/\alpha)}{|a^{\prime}_{tt}|}\leq \frac{R_{j_{t}}(A)}{|a_{j_{t}j_{t}}|}.
	\end{align*}
\end{lem}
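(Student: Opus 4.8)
The plan is to run both parts through the entrywise bounds (\ref{eq3.2}) on the entries of $A/\alpha$ and to control the correction terms $\Delta_{j_t j_u}$ by $Q^{\alpha}_{j_t}(A)$ by means of Lemma \ref{Lemma 3.1}. First observe that since $\emptyset\neq\alpha\subseteq N_2$, the principal submatrix $A(\alpha)$ is $SDD$, hence an $H$-matrix with $<A(\alpha)>^{-1}\geq 0$; so $A/\alpha$ is well defined and (\ref{eq3.2}) applies. In particular, for $j_t\in\overline\alpha$ this gives
\begin{align*}
	|a^{\prime}_{tt}|\geq |a_{j_t j_t}|-\Delta_{j_t j_t},\qquad R_{t}(A/\alpha)=\sum_{u\neq t}|a^{\prime}_{tu}|\leq R^{\overline\alpha}_{j_t}(A)+\sum_{u\neq t}\Delta_{j_t j_u}.
\end{align*}

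The heart of the argument is the estimate $\sum_{j_u\in\overline\alpha}\Delta_{j_t j_u}\leq Q^{\alpha}_{j_t}(A)$ together with $\Delta_{j_t j_t}\leq Q^{\alpha}_{j_t}(A)$. For the first, summing (\ref{eq3.1}) over $j_u\in\overline\alpha$ yields $\sum_{j_u\in\overline\alpha}\Delta_{j_t j_u}=|a_{j_t\alpha}|<A(\alpha)>^{-1}x$, where $x$ is the vector with $s$-th component $\sum_{j_u\in\overline\alpha}|a_{i_s j_u}|=R^{\overline\alpha}_{i_s}(A)$ (here $i_s\in\alpha$, so no term is excluded); since $x_s=R^{\overline\alpha}_{i_s}(A)$ trivially satisfies $0\le x_s\le R^{\overline\alpha}_{i_s}(A)$, Lemma \ref{Lemma 3.1} gives $<A(\alpha)>^{-1}x\leq\bigl(R_{i_1}(A)/|a_{i_1 i_1}|,\dots,R_{i_k}(A)/|a_{i_k i_k}|\bigr)^{T}$, and left-multiplying by the nonnegative row vector $|a_{j_t\alpha}|$ produces $\sum_{j_u\in\overline\alpha}\Delta_{j_t j_u}\leq\sum_{i_u\in\alpha}|a_{j_t i_u}|R_{i_u}(A)/|a_{i_u i_u}|=Q^{\alpha}_{j_t}(A)$. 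The bound $\Delta_{j_t j_t}\leq Q^{\alpha}_{j_t}(A)$ is obtained in exactly the same way, applying Lemma \ref{Lemma 3.1} to the single column vector $|a_{\alpha j_t}|$, whose $s$-th component $|a_{i_s j_t}|$ satisfies $0\le|a_{i_s j_t}|\le R^{\overline\alpha}_{i_s}(A)$ because $j_t\in\overline\alpha$. I will also record that $Q^{\alpha}_{j_t}(A)\leq R^{\alpha}_{j_t}(A)$, which holds since $R_{i_u}(A)/|a_{i_u i_u}|<1$ for every $i_u\in\alpha\subseteq N_2$, and the trivial splitting $R_{j_t}(A)=R^{\overline\alpha}_{j_t}(A)+R^{\alpha}_{j_t}(A)$.

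Part (i) then follows by combining the two displayed inequalities and absorbing $\Delta_{j_t j_t}$ into the full sum over $\overline\alpha$:
\begin{align*}
	|a^{\prime}_{tt}|-R_{t}(A/\alpha)\geq |a_{j_t j_t}|-R^{\overline\alpha}_{j_t}(A)-\sum_{j_u\in\overline\alpha}\Delta_{j_t j_u}\geq |a_{j_t j_t}|-R^{\overline\alpha}_{j_t}(A)-Q^{\alpha}_{j_t}(A),
\end{align*}
and the last quantity is $\geq |a_{j_t j_t}|-R^{\overline\alpha}_{j_t}(A)-R^{\alpha}_{j_t}(A)=|a_{j_t j_t}|-R_{j_t}(A)$. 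For part (ii) I add the hypotheses $\alpha\subsetneq N_2$ and $j_t\in N_2\setminus\alpha$, which give $\Delta_{j_t j_t}\leq Q^{\alpha}_{j_t}(A)\leq R^{\alpha}_{j_t}(A)<R_{j_t}(A)<|a_{j_t j_t}|$, so $|a^{\prime}_{tt}|\geq |a_{j_t j_t}|-\Delta_{j_t j_t}>0$ and hence
\begin{align*}
	\frac{R_{t}(A/\alpha)}{|a^{\prime}_{tt}|}\leq\frac{R^{\overline\alpha}_{j_t}(A)+\sum_{u\neq t}\Delta_{j_t j_u}}{|a_{j_t j_t}|-\Delta_{j_t j_t}}.
\end{align*}
To finish, one shows the right-hand side is $\leq R_{j_t}(A)/|a_{j_t j_t}|$; since both denominators are positive, cross-multiplying reduces this to $|a_{j_t j_t}|\sum_{u\neq t}\Delta_{j_t j_u}+R_{j_t}(A)\Delta_{j_t j_t}\leq |a_{j_t j_t}|R^{\alpha}_{j_t}(A)$, and this follows because $R_{j_t}(A)\leq|a_{j_t j_t}|$ bounds the left side by $|a_{j_t j_t}|\bigl(\sum_{u\neq t}\Delta_{j_t j_u}+\Delta_{j_t j_t}\bigr)=|a_{j_t j_t}|\sum_{j_u\in\overline\alpha}\Delta_{j_t j_u}\leq|a_{j_t j_t}|Q^{\alpha}_{j_t}(A)\leq|a_{j_t j_t}|R^{\alpha}_{j_t}(A)$.

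The only genuine obstacle is the estimate in the second paragraph: one must choose the auxiliary vectors ($x$, the column sum, and $|a_{\alpha j_t}|$, a single column) so that the entrywise hypothesis of Lemma \ref{Lemma 3.1} is satisfied, and recognize that their entries are precisely $R^{\overline\alpha}_{i_s}(A)$ in the first case and bounded by $R^{\overline\alpha}_{i_s}(A)$ in the second. Once $\Delta_{j_t j_t}\leq Q^{\alpha}_{j_t}(A)$ and $\sum_{j_u\in\overline\alpha}\Delta_{j_t j_u}\leq Q^{\alpha}_{j_t}(A)$ are in hand, everything else is bookkeeping with the triangle inequality and the inclusion $\alpha\subseteq N_2$.
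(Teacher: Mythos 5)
Your proof is correct and takes essentially the same route as the paper: both parts rest on the entrywise bounds (\ref{eq3.2}) together with the key estimate $\sum_{j_u\in\overline\alpha}\Delta_{j_tj_u}\le Q^{\alpha}_{j_t}(A)$ obtained from Lemma \ref{Lemma 3.1} applied to $\bigl(R^{\overline\alpha}_{i_1}(A),\dots,R^{\overline\alpha}_{i_k}(A)\bigr)^{T}$. The only differences are cosmetic: in (ii) you finish by cross-multiplying and using $R_{j_t}(A)\le|a_{j_tj_t}|$ where the paper adds $\Delta_{j_tj_t}$ to numerator and denominator via Lemma \ref{Lemma 2.5}, and your strict inequality $R^{\alpha}_{j_t}(A)<R_{j_t}(A)$ should be non-strict, which is harmless since the strictness you actually need comes from $R_{j_t}(A)<|a_{j_tj_t}|$.
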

\begin{proof}
(i)	Note that 
	\begin{align*}
		|a^{\prime}_{tt}|-R_{t}(A/\alpha)&=|a^{\prime}_{tt}|-\underset{j_{u}\in \overline \alpha\backslash \{j_{t}\}}\sum|a^{\prime}_{tu}|\\
		&\geq |a_{j_{t}j_{t}}|-\Delta_{j_{t}j_{t}}-\underset{j_{u}\in \overline \alpha\backslash \{j_{t}\}}\sum \left(|a_{j_{t}j_{u}}|+\Delta_{j_{t}j_{u}}\right)\\
		&=|a_{j_{t}j_{t}}|-\underset{j_{u}\in \overline \alpha \backslash \{j_{t}\}}\sum|a_{j_{t}j_{u}}|-\underset{j_{u}\in \overline \alpha}\sum\Delta_{j_{t}j_{u}}.
	\end{align*}
	According to Lemma \ref{Lemma 3.1}, we have
	\begin{align*}
		\underset{j_{u}\in \overline \alpha}\sum\Delta_{j_{t}j_{u}}&=|a_{j_{t}\alpha}|< A(\alpha)>^{-1} \left(R^{\overline \alpha}_{i_{1}}(A),R^{\overline \alpha}_{i_{2}}(A),\dots,R^{\overline \alpha}_{i_{k}}(A)\right)^{T}\\
		&\leq |a_{j_{t}\alpha}|\left(\frac{R_{i_{1}}(A)}{|a_{i_{1}i_{1}}|},\frac{R_{i_{2}}(A)}{|a_{i_{2}i_{2}}|},\dots \frac{R_{i_{k}}(A)}{|a_{i_{k}i_{k}}|}\right)^{T}\\
		&=Q^{\alpha}_{j_{t}}(A).
	\end{align*}
	Therefore 
	\begin{align}
		|a^{\prime}_{tt}|-R_{t}(A/\alpha)\geq& |a_{j_{t}j_{t}}|-\underset{j_{u}\in \overline \alpha \backslash \{j_{t}\}}\sum|a_{j_{t}j_{u}}|-\underset{j_{u}\in \overline \alpha}\sum\Delta_{j_{t}j_{u}}\notag\\
		\geq &  |a_{j_{t}j_{t}}|-R^{\overline \alpha}_{j_{t}}(A)-Q^{\alpha}_{j_{t}}(A)\notag\\
		\geq& |a_{j_{t}j_{t}}|-R_{j_{t}}(A).\label{eq 3.3}
	\end{align}
	(ii) For  any $j_{t}\in N_{2}\backslash \alpha$,  owing to (\ref{eq 3.3}), it follows that
	\begin{align}
		 |a_{j_{t}j_{t}}|-\underset{j_{u}\in \overline \alpha \backslash \{j_{t}\}}\sum|a_{j_{t}j_{u}}|-\underset{j_{u}\in \overline \alpha}\sum\Delta_{j_{t}j_{u}}\geq |a_{j_{t}j_{t}}|-R_{j_{t}}(A)>0.\label{eq 3.4}
	\end{align}
Therefore, leveraging  Lemma \ref{Lemma 3.1}, Lemma \ref{Lemma 2.5} and (\ref{eq 3.4}), we can derive the following:
\begin{align*}
	\frac{R_{t}(A/\alpha)}{|a^{\prime}_{tt}|}=\frac{\underset{j_{u}\in \overline \alpha \backslash \{j_{t}\}}\sum|a^{\prime}_{tu}|}{|a^{\prime}_{tt}|}\leq \frac{\underset{j_{u}\in \overline \alpha\backslash \{j_{t}\}}\sum(|a_{j_{t}j_{u}}|+\Delta_{j_{t}j_{u}})}{|a_{j_{t}j_{t}}|-\Delta_{j_{t}j_{t}}}\leq \frac{\underset{j_{u}\in \overline \alpha \backslash \{j_{t}\}}\sum|a_{j_{t}j_{u}}|+\underset{j_{u}\in \overline \alpha}\sum\Delta_{j_{t}j_{u}}}{|a_{j_{t}j_{t}}|}\leq \frac{R_{j_{t}}(A)}{|a_{j_{t}j_{t}}|}.
\end{align*}	
The proof is completed.
\end{proof}

To continue our main theorem, we add the following definition:
	\begin{align*}
	\widetilde {N_{1}}:=\{j_{t}\in \overline \alpha:|a^{\prime}_{tt}|\leq R_{t}(A/\alpha)\},\quad \widetilde {N_{2}}:=	\{j_{t}\in \overline \alpha:|a^{\prime}_{tt}|> R_{t}(A/\alpha)\}.
	\end{align*}
It can be found that $\widetilde {N_{1}}$ describes the non-diagonally dominant set of $A/\alpha$ in terms of the elements of the original set, and correspondingly $\widetilde {N_{2}}$ describes the diagonally dominant set of $A/\alpha$ in terms of the elements of the original set.  To elucidate this concept, we present the following  example.

\begin{example}\label{Example4.1}\rm Consider the following matrix
	\begin{align*}
		A=\left(
		\begin{array}{ccccc}
			2& 0& 1 & 0&0\\
			 1&  5&   1& 1& 0  \\
			0 & 0 & 3 &0  &0 \\
			1 & 0 &0 &2 &1  \\
			0 & 2 & 0 &1& 2 \\
		\end{array}
		\right)  .	
	\end{align*}
We have $N_{1}=\{4,5\}$, $N_{2}=\{1,2,3\}$. Take $\alpha=\{1\}$, then\vspace{-11ex}
$$
\adjustbox{valign=B}{%
	$A/\alpha=\left(
	\begin{array}{cccc}
		5 & 0.5 & 1 & 0 \\
		0 & 3 & 0 & 0 \\
		0 & -0.5 & 2 & 1 \\
		2 & 0 & 1 & 2 \\
	\end{array}
	\right)$%
}
\quad 
\adjustbox{valign=B}{%
	$\begin{aligned}
		 \vphantom{\begin{array}{c}0\\0\\0\\0\end{array}} \\[-0.8ex]
		 &\dashrightarrow\\
		&\dashrightarrow \\[-0.8ex]
		&\dashrightarrow \\[-0.8ex]
		&\dashrightarrow
	\end{aligned}$%
}
\quad 
\adjustbox{valign=B}{%
	$\left(
	\begin{array}{c:c c c c}
		2 & 0 & 1 & 0 & 0 \\
		\hdashline
		1 & 5 & 1 & 1 & 0 \\      
		0 & 0 & 3 & 0 & 0 \\      
		1 & 0 & 0 & 2 & 1 \\      
		0 & 2 & 0 & 1 & 2 \\      
	\end{array}
	\right)$.%
}
$$
Then there is $	\widetilde {N_{1}}=\{5\}$, $\widetilde {N_{2}}=\{2,3,4\}$.
\end{example}
The following result establishes the quantitative relationship between $\widetilde {N_{1}}$ and $\widetilde {N_{2}}$ when $\alpha$ is a  nonempty proper subset of $N_{2}$.

\begin{lem}\rm \label{Lemma 3.3} Let  $A=(a_{ij})\in  \mathbb C^{n\times n}$, and $\emptyset \neq \alpha\subsetneq N_{2}$. Then, the following properties hold:\\
	(i) $N_{2}\backslash \alpha\subseteq \widetilde {N_{2}}$.\\
	(ii) $\widetilde {N_{1}}\subseteq N_{1}$.\\
	(iii) $N_{1}\backslash \widetilde {N_{1}}=\widetilde {N_{2}}\backslash (N_{2}\backslash \alpha).    $
\end{lem}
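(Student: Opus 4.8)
The plan is to prove the three set relations in Lemma \ref{Lemma 3.3} as consequences of the estimates already obtained in Lemma \ref{Lemma 3.2}, using only the sign of the quantity $|a_{j_tj_t}|-R_{j_t}(A)$ as a discriminant between $N_1$ and $N_2$.

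For part (i), take any $j_t\in N_2\backslash\alpha$. By definition of $N_2$ we have $|a_{j_tj_t}|-R_{j_t}(A)>0$, and Lemma \ref{Lemma 3.2}(i) gives $|a'_{tt}|-R_t(A/\alpha)\geq |a_{j_tj_t}|-R_{j_t}(A)>0$, so $j_t\in\widetilde{N_2}$; hence $N_2\backslash\alpha\subseteq\widetilde{N_2}$. For part (ii), I argue by contrapositive: if $j_t\notin N_1$ then $j_t\in N_2$, and since $\alpha\subsetneq N_2$ and $\alpha$ contains none of the indices we are discussing (the $j_t$ live in $\overline\alpha$), we in fact have $j_t\in N_2\backslash\alpha$, so by part (i) $j_t\in\widetilde{N_2}$, i.e. $j_t\notin\widetilde{N_1}$. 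Therefore $\widetilde{N_1}\subseteq N_1$. (One should state clearly at the outset that $\overline\alpha\supseteq N_1$ because $\alpha\subseteq N_2$, so every index in $\widetilde{N_1}$ or $\widetilde{N_2}$ is an index of $\overline\alpha$, and $N_1\subseteq\overline\alpha$; this is the bookkeeping that makes the complements line up.)

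Part (iii) is the substantive one. Since $\overline\alpha = N_1 \cup (N_2\backslash\alpha)$ is a disjoint union, and $\overline\alpha = \widetilde{N_1}\cup\widetilde{N_2}$ is also a disjoint union, I want to show $N_1\backslash\widetilde{N_1} = \widetilde{N_2}\backslash(N_2\backslash\alpha)$. From (i) we have $N_2\backslash\alpha\subseteq\widetilde{N_2}$, so $\widetilde{N_2}\backslash(N_2\backslash\alpha)$ makes sense as the ``extra'' part of $\widetilde{N_2}$; and from (ii) we have $\widetilde{N_1}\subseteq N_1$, so $N_1\backslash\widetilde{N_1}$ is the part of $N_1$ that became diagonally dominant. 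The cleanest route is pure set algebra on $\overline\alpha$: an index $j_t\in\overline\alpha$ lies in $N_1\backslash\widetilde{N_1}$ iff $j_t\in N_1$ and $j_t\in\widetilde{N_2}$ (using $\widetilde{N_1},\widetilde{N_2}$ partition $\overline\alpha$); and $j_t\in\widetilde{N_2}\backslash(N_2\backslash\alpha)$ iff $j_t\in\widetilde{N_2}$ and $j_t\in N_1$ (using $N_1, N_2\backslash\alpha$ partition $\overline\alpha$). The two characterizations coincide, which proves (iii). I expect the main obstacle to be purely organizational — keeping the four sets $N_1$, $N_2\backslash\alpha$, $\widetilde{N_1}$, $\widetilde{N_2}$ and their ambient set $\overline\alpha$ straight, and making sure the two disjoint-union decompositions of $\overline\alpha$ are invoked correctly — rather than any analytic difficulty, since all the hard inequalities were done in Lemma \ref{Lemma 3.2}.
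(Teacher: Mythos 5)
Your proposal is correct and follows essentially the same route as the paper: (i) and (ii) fall out of the lower bound in Lemma \ref{Lemma 3.2}, and (iii) is the same set-theoretic bookkeeping on the two disjoint decompositions of $\overline\alpha$ (the paper does it as a two-sided element-chase with contradictions, you phrase it as both sides equalling $N_1\cap\widetilde{N_2}$, which is only a cosmetic difference). Your explicit derivations of (i) and (ii), which the paper dismisses as ``evident,'' are accurate.
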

\begin{proof}
	Based on Lemma  \ref{Lemma 3.2}, it is evident that  (i) and (ii) hold true. We give a proof for (iii).\\
		($\implies$) Suppose $N_1 \setminus \widetilde{N_1} \neq \emptyset$. For any $x \in N_1 \setminus \widetilde{N_1}$, we have:
	\begin{itemize}[leftmargin=*,nosep]
		\item $x \in N_1$ by membership condition;
		\item $x \notin \widetilde{N_1}$ by set difference definition;
		\item $x \in \widetilde{N_2}$ through complementary relationship.
	\end{itemize}
	If $x \in N_2 \setminus \alpha$, this would contradict $x \in N_1$ since $N_1 \cap N_2 = \emptyset$. Therefore
	\[
	x \in \widetilde{N_2} \setminus (N_2 \setminus \alpha).
	\]
	By the arbitrariness of $x$, we establish
	\begin{align}
		N_1 \setminus \widetilde{N_1} \subseteq \widetilde{N_2} \setminus (N_2 \setminus \alpha). \label{eq:subset1}
	\end{align}
	The containment trivially holds when $N_1 \setminus \widetilde{N_1} = \emptyset$.\\
($\impliedby$) Assume $\widetilde{N_2} \setminus (N_2 \setminus \alpha) \neq \emptyset$. For any $x \in \widetilde{N_2} \setminus (N_2 \setminus \alpha)$, we deduce
	\begin{itemize}[leftmargin=*,nosep]
		\item $x \in \widetilde{N_2}$ by membership condition;
		\item $x \notin N_2 \setminus \alpha$ by set difference definition;
		\item $x \in N_1$ by $x\in \overline\alpha$ and $N_{1}\cap N_{2}=\emptyset $.			
	\end{itemize}
	If $x\in  \widetilde{N_1}$, this would contradict $x\in \widetilde{N_2}$ since $\widetilde{N_1}\cap \widetilde{N_2}=\emptyset$.		This implies
	\[
	x \in N_1 \setminus \widetilde{N_1}.
	\]
	By the arbitrariness of $x$, we obtain that
	\begin{align}
		\widetilde{N_2} \setminus (N_2 \setminus \alpha) \subseteq N_1 \setminus \widetilde{N_1}. \label{eq:subset2}
	\end{align}
	The containment trivially holds when $\widetilde{N_2} \setminus (N_2 \setminus \alpha) = \emptyset$.
	Combining \eqref{eq:subset1} and \eqref{eq:subset2}, we conclude
	\[
	N_1 \setminus \widetilde{N_1} = \widetilde{N_2} \setminus (N_2 \setminus \alpha). \qedhere.
	\]
The proof is completed.	
\end{proof}

\begin{figure}[h]
	\begin{minipage}{1.0 \linewidth}
		\centering
		\includegraphics[width=4in]{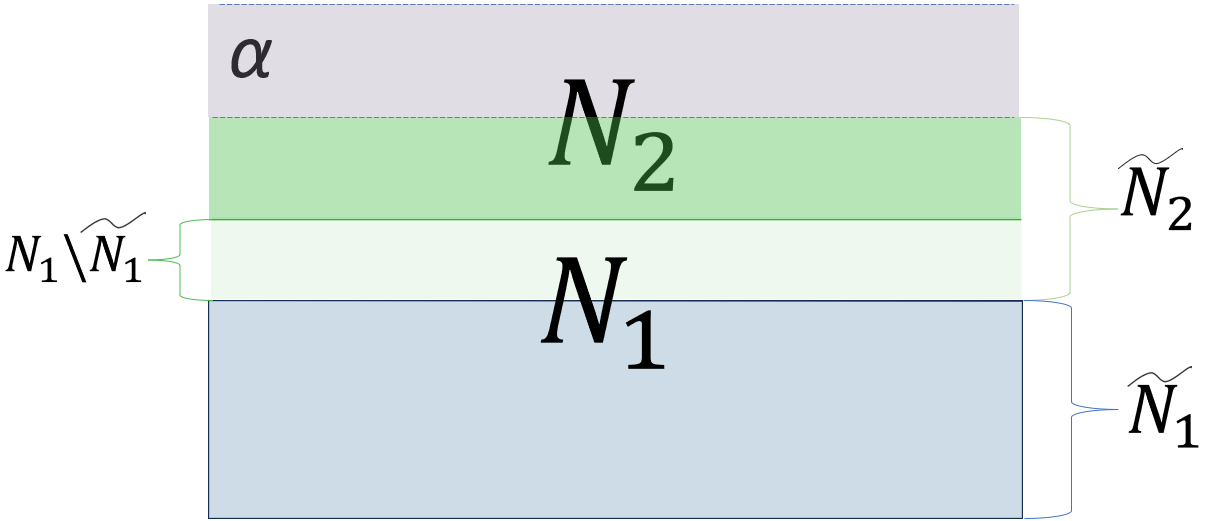}
	\end{minipage}
\qquad	\caption{The relation of $N_{1}$, $N_{2}$, $\widetilde N_{1}$, $\widetilde N_{2}$. }\label{Figure.1}
\end{figure}

We graphically express Lemma \ref{Lemma 3.3} with Figure. \ref{Figure.1}. Lemma \ref{Lemma 3.3} tells us that when a subset $\emptyset \neq\alpha\subsetneq N_{2}$ is taken, the Schur complement  $A/\alpha$ may cause the original non-diagonally dominant row to become diagonally dominant, and that the increase of the diagonally dominant row is equal to the decrease of the non-diagonally dominant row.
\par
Next, after doing enough preparatory work, we give one of the three main results of this Section.
\begin{theorem}\rm  \label{Theorem 3.1} Let $A=(a_{ij})\in \mathbb C^{n\times n}$ be an $SDD_{1}$ matrix. If $\emptyset \neq \alpha \subsetneq N_{2}$, then for any $j_{t}\in \overline \alpha$, we have
	\begin{align}
		|a^{\prime}_{tt}|-P_{t}(A/\alpha)&\geq |a_{j_{t}j_{t}}|-R^{N_{1}}_{j_{t}}(A)-Q^{N_{2}\backslash \alpha}_{j_{t}}(A)-\underset{h\in \alpha}\sum\frac{|a_{j_{t}h}|}{|a_{hh}|}\left(R^{N_{1}\cup \{j_{t}\}}_{h}(A)+Q^{N_{2}\backslash \{j_{t}\}}_{h}(A)\right)\notag\\
		&\geq |a_{j_{t}j_{t}}|-P_{j_{t}}(A)>0.\label{eq 3.5}
	\end{align}
\end{theorem}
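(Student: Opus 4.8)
The plan is to push $P_{t}(A/\alpha)$ up and $|a^{\prime}_{tt}|$ down, transferring every quantity onto the single row $j_{t}$ of $A$ together with the block $A(\alpha)$, and then to control the block contribution by the prior construction of Lemma~\ref{Lemma 3.1}. Since $\alpha\subseteq N_{2}$, the submatrix $A(\alpha)$ is SDD, hence an $H$-matrix, so the numbers $\Delta_{j_{t}j_{u}}$ of \eqref{eq3.1} are defined and the two-sided estimate \eqref{eq3.2} is available.

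First I would rewrite $P_{t}(A/\alpha)=R^{\widetilde{N_{1}}}_{t}(A/\alpha)+Q^{\widetilde{N_{2}}}_{t}(A/\alpha)$ using Lemma~\ref{Lemma 3.3}: one has $\widetilde{N_{1}}\subseteq N_{1}$, and $\widetilde{N_{2}}$ is the disjoint union of $N_{2}\setminus\alpha$ and $N_{1}\setminus\widetilde{N_{1}}$. On the part $N_{1}\setminus\widetilde{N_{1}}\subseteq\widetilde{N_{2}}$ I bound $R_{u}(A/\alpha)/|a^{\prime}_{uu}|\le1$, and on $N_{2}\setminus\alpha$ I apply Lemma~\ref{Lemma 3.2}(ii) to get $R_{u}(A/\alpha)/|a^{\prime}_{uu}|\le R_{j_{u}}(A)/|a_{j_{u}j_{u}}|$; since $\widetilde{N_{1}}\cup(N_{1}\setminus\widetilde{N_{1}})=N_{1}$, this regroups into $P_{t}(A/\alpha)\le\sum_{j_{u}\in N_{1}\setminus\{j_{t}\}}|a^{\prime}_{tu}|+\sum_{j_{u}\in(N_{2}\setminus\alpha)\setminus\{j_{t}\}}|a^{\prime}_{tu}|R_{j_{u}}(A)/|a_{j_{u}j_{u}}|$. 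Next I insert \eqref{eq3.2}: replacing $|a^{\prime}_{tt}|$ by $|a_{j_{t}j_{t}}|-\Delta_{j_{t}j_{t}}$ from below and each $|a^{\prime}_{tu}|$ by $|a_{j_{t}j_{u}}|+\Delta_{j_{t}j_{u}}$ from above, the terms without $\Delta$ collapse to $|a_{j_{t}j_{t}}|-R^{N_{1}}_{j_{t}}(A)-Q^{N_{2}\setminus\alpha}_{j_{t}}(A)$, while all the $\Delta$-terms assemble, via $\Delta_{j_{t}j_{u}}=|a_{j_{t}\alpha}|\langle A(\alpha)\rangle^{-1}|a_{\alpha j_{u}}|$, into a single scalar $|a_{j_{t}\alpha}|\langle A(\alpha)\rangle^{-1}v$, where $v\in\mathbb{R}^{k}$ has $i_{m}$-coordinate $v_{m}=|a_{i_{m}j_{t}}|+R^{N_{1}\setminus\{j_{t}\}}_{i_{m}}(A)+Q^{(N_{2}\setminus\alpha)\setminus\{j_{t}\}}_{i_{m}}(A)=R^{N_{1}\cup\{j_{t}\}}_{i_{m}}(A)+Q^{(N_{2}\setminus\alpha)\setminus\{j_{t}\}}_{i_{m}}(A)$.

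The crux is then to control $|a_{j_{t}\alpha}|\langle A(\alpha)\rangle^{-1}v$ by the prior construction. I would check that $v$ meets the hypothesis of Lemma~\ref{Lemma 3.1}, namely $0\le v_{m}\le R^{\overline\alpha}_{i_{m}}(A)$, which holds because $R_{j}(A)/|a_{jj}|<1$ for $j\in N_{2}$ and because $N_{1}\cup\{j_{t}\}$ and $(N_{2}\setminus\alpha)\setminus\{j_{t}\}$ partition $\overline\alpha$. Lemma~\ref{Lemma 3.1} then yields $\langle A(\alpha)\rangle^{-1}v\le\bigl(\tfrac{v_{m}+Q^{\alpha}_{i_{m}}(A)}{|a_{i_{m}i_{m}}|}\bigr)_{m}$, and a short set-algebra identity (using $\alpha\subseteq N_{2}$ and $j_{t}\notin\alpha$, so $N_{2}\setminus\{j_{t}\}=\alpha\cup((N_{2}\setminus\alpha)\setminus\{j_{t}\})$) gives $v_{m}+Q^{\alpha}_{i_{m}}(A)=R^{N_{1}\cup\{j_{t}\}}_{i_{m}}(A)+Q^{N_{2}\setminus\{j_{t}\}}_{i_{m}}(A)$; multiplying through by $|a_{j_{t}\alpha}|\ge0$ turns the bound from the previous step into exactly the first inequality of \eqref{eq 3.5}. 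The step I expect to be the main obstacle is precisely this bookkeeping: tracking which index sets survive the regroupings of the second paragraph, choosing the right "prior" vector $v$, and verifying it lands in the admissible range of Lemma~\ref{Lemma 3.1}.

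For the second inequality, since $\alpha\subseteq N_{2}$ one has $P_{j_{t}}(A)=R^{N_{1}}_{j_{t}}(A)+Q^{\alpha}_{j_{t}}(A)+Q^{N_{2}\setminus\alpha}_{j_{t}}(A)$, so it reduces to $\sum_{h\in\alpha}\tfrac{|a_{j_{t}h}|}{|a_{hh}|}\bigl(R^{N_{1}\cup\{j_{t}\}}_{h}(A)+Q^{N_{2}\setminus\{j_{t}\}}_{h}(A)\bigr)\le Q^{\alpha}_{j_{t}}(A)=\sum_{h\in\alpha}\tfrac{|a_{j_{t}h}|}{|a_{hh}|}R_{h}(A)$, which I would prove termwise: for each $h\in\alpha\subseteq N_{2}$, $R^{N_{1}\cup\{j_{t}\}}_{h}(A)+Q^{N_{2}\setminus\{j_{t}\}}_{h}(A)\le R^{N_{1}\cup\{j_{t}\}}_{h}(A)+R^{N_{2}\setminus\{j_{t}\}}_{h}(A)=R_{h}(A)$, using $R_{j}(A)/|a_{jj}|<1$ on $N_{2}$ and that $N_{1}\cup\{j_{t}\}$ and $N_{2}\setminus\{j_{t}\}$ partition $N$. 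Finally, the strict inequality $|a_{j_{t}j_{t}}|-P_{j_{t}}(A)>0$ is just the hypothesis that $A$ is $SDD_{1}$, which closes the chain.
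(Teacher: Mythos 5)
Your proposal is correct and follows essentially the same route as the paper's proof: the decomposition of $P_{t}(A/\alpha)$ via Lemma~\ref{Lemma 3.3} and Lemma~\ref{Lemma 3.2}(ii), the insertion of the $\Delta_{j_{t}j_{u}}$ bounds from (\ref{eq3.2}), and the control of the assembled $\Delta$-sum by the prior construction of Lemma~\ref{Lemma 3.1} with exactly the vector the paper uses. Your explicit verification that this vector satisfies $0\le v_{m}\le R^{\overline\alpha}_{i_{m}}(A)$, and the termwise argument for the second inequality, simply spell out steps the paper leaves implicit.
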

\begin{proof}
	By Lemma \ref{Lemma 3.3}, we can suppose  $N_{1}\backslash \widetilde {N_{1}}=\widetilde {N_{2}}\backslash (N_{2}\backslash \alpha)=L$. Therefore 
	\begin{align*}
		|a^{\prime}_{tt}|-P_{t}(A/\alpha)&=|a^{\prime}_{tt}|-\underset{j_{u}\in \widetilde N_{1}\backslash \{j_{t}\}}\sum|a^{\prime}_{tu}|-\underset{j_{u}\in \widetilde N_{2}\backslash \{j_{t}\}}\sum|a^{\prime}_{tu}|\frac{R_{u}(A/\alpha)}{|a^{\prime}_{uu}|}\\
		&\geq |a^{\prime}_{tt}|-\underset{j_{u}\in \widetilde N_{1}\backslash \{j_{t}\}}\sum|a^{\prime}_{tu}|-\underset{j_{u}\in L\backslash\{j_{t}\} }\sum|a^{\prime}_{tu}|-\underset{j_{u}\in \widetilde N_{2}\backslash L \backslash \{j_{t}\}}\sum|a^{\prime}_{tu}|\frac{R_{u}(A/\alpha)}{|a^{\prime}_{uu}|}\\
		&=|a^{\prime}_{tt}|-\underset{j_{u}\in N_{1}\backslash \{j_{t}\}}\sum|a^{\prime}_{tu}|-\underset{j_{u}\in N_{2}\backslash\alpha\backslash \{j_{t}\} }\sum|a^{\prime}_{tu}|\frac{R_{u}(A/\alpha)}{|a^{\prime}_{uu}|}\\
		&\geq |a^{\prime}_{tt}|-\underset{j_{u}\in N_{1}\backslash \{j_{t}\}}\sum|a^{\prime}_{tu}|-\underset{j_{u}\in N_{2}\backslash\alpha\backslash \{j_{t}\} }\sum|a^{\prime}_{tu}|\frac{R_{j_{u}}(A)}{|a_{j_{u}j_{u}}|}(\text{\rm Form  (ii)  of  Lemma \ref{Lemma 3.2}})\\
		&\geq |a_{j_{t}j_{t}}|-\Delta_{j_{t}j_{t}}-\underset{j_{u}\in N_{1}\backslash \{j_{t}\}}\sum(|a_{j_{t}j_{u}}|+\Delta_{j_{t}j_{u}})-\underset{j_{u}\in N_{2}\backslash\alpha\backslash \{j_{t}\} }\sum(|a_{j_{t}j_{u}}|+\Delta_{j_{t}j_{u}})\frac{R_{j_{u}}(A)}{|a_{j_{u}j_{u}}|}\\
		&=|a_{j_{t}j_{t}}|-R^{N_{1}}_{j_{t}}(A)-Q^{N_{2}\backslash \alpha}_{j_{t}}(A)- \left(\Delta_{j_{t}j_{t}}+\underset{j_{u}\in N_{1}\backslash \{j_{t}\}}\sum\Delta_{j_{t}j_{u}}+\underset{j_{u}\in N_{2}\backslash\alpha\backslash \{j_{t}\} }\sum\Delta_{j_{t}j_{u}}\frac{R_{j_{u}}(A)}{|a_{j_{u}j_{u}}|}\right).
	\end{align*}
Make use of Lemma \ref{Lemma 3.1}, we have	
	\begin{align*}
		&\Delta_{j_{t}j_{t}}+\underset{j_{u}\in N_{1}\backslash \{j_{t}\}}\sum\Delta_{j_{t}j_{u}}+\underset{j_{u}\in N_{2}\backslash\alpha\backslash \{j_{t}\} }\sum\Delta_{j_{t}j_{u}}\frac{R_{j_{u}}(A)}{|a_{j_{u}j_{u}}|}\\
		=&|a_{j_{t}\alpha}|< A(\alpha)>^{-1}\left(R^{N_{1}\cup \{j_{t}\}}_{i_{1}}(A)+Q^{N_{2}\backslash \alpha\backslash \{j_{t}\}}_{i_{1}}(A),\dots,  R^{N_{1}\cup \{j_{t}\}}_{i_{k}}(A)+Q^{N_{2}\backslash \alpha\backslash \{j_{t}\}}_{i_{k}}(A)\right)^{T}\\
		\leq& \underset{h\in \alpha}\sum\frac{|a_{j_{t}h}|}{|a_{hh}|}\left(R^{N_{1}\cup \{j_{t}\}}_{h}(A)+Q^{N_{2}\backslash \{j_{t}\}}_{h}(A)\right)\leq Q^{\alpha}_{j_{t}}(A).
	\end{align*}
	Therefore, we have
	\begin{align*}
			|a^{\prime}_{tt}|-P_{t}(A/\alpha)&\geq |a_{j_{t}j_{t}}|-R^{N_{1}}_{j_{t}}(A)-Q^{N_{2}\backslash \alpha}_{j_{t}}(A)-\underset{h\in \alpha}\sum\frac{|a_{j_{t}h}|}{|a_{hh}|}\left(R^{N_{1}\cup \{j_{t}\}}_{h}(A)+Q^{N_{2}\backslash \{j_{t}\}}_{h}(A)\right)\\
		&\geq |a_{j_{t}j_{t}}|-R^{N_{1}}_{j_{t}}(A)-Q^{N_{2}\backslash \alpha}_{j_{t}}(A)    -Q^{\alpha}_{j_{t}}(A)                                             =|a_{j_{t}j_{t}}|-P_{j_{t}}(A)>0
	\end{align*}
	The proof is completed.
\end{proof}
\begin{remark}\rm Given a matrix $A\in \mathbb C^{n\times n}$, we denote the $D_{1}$-diagonally  dominant degree as
	\begin{align*}
	 \delta_{i}=|a_{ii}|-P_{i}(A),          \qquad i\in N.  
	 \end{align*}
	 Then,  $A$ is an $SDD_{1}$ matrix if and only if all the $D_{1}$-diagonally dominant degrees of $A$ are positive. Theorem \ref{Theorem 3.1} tells us if  $A$ is  an $SDD_{1}$ matrix and $\alpha$ is a nonempty proper subset of $N_{2}$, then $A/\alpha$  is still an $SDD_{1}$ matrix.  Moreover, the $D_{1}$-diagonally dominant degree of its Schur complement is better than the $D_{1}$-diagonally dominant degree of the original matrix in the corresponding row.
	\end{remark}

\begin{example}\rm Consider the $SDD_{1}$ matrix
		\begin{align*}
		A=\left(
		\begin{array}{cccccc}
			10& 0& 1 & 0&2  &3  \\
			1&  16&   1& 2& 0 &0  \\
			2 & 0 & 20 &0  &2  &1\\
			1 & 7 &3 &12 &3 &1 \\
			2 & 3 & 10 &1& 7 &0\\
			6& 3 & 16& 7& 5&22
		\end{array}
		\right)  .	
	\end{align*}
We have $N_{1}=\{4,5, 6\}$, $N_{2}=\{1,2,3\}$. 	 Take $\alpha=\{1,2\}$, then
	\begin{align*}
	A/\alpha=(a^{\prime}_{tu})=\left(
	\begin{array}{cccccc}
		19.8& 0& 1.6 & 0.4  \\
		2.5063&  11.125&   2.8875& 0.8313 \\
		9.6313 & 0.625 & 6.6375 &-0.5438  \\
		15.2313 & 6.625 &3.8375 &20.2563 \\
	\end{array}
	\right).	
\end{align*}
	Note that the non-diagonally dominant set of $A/\alpha$ is $\{3,4\}$ and diagonally dominant set of $A/\alpha$ is $\{1,2\}$, and
	\begin{align*}
		&|a^{\prime}_{11}|-P_{1}(A/\alpha)=19.8-2=17.8,\\
		&|a^{\prime}_{22}|-P_{2}(A/\alpha)=11.125-2.5063\times \frac{2}{19.8}-2.8875-0.8313=7.153,\\
  	&|a^{\prime}_{33}|-P_{3}(A/\alpha)=6.6375-9.6313\times \frac{2}{19.8} -0.625\times\frac{6.2251}{11.125}-0.5438=4.7711,\\ 
  	&|a^{\prime}_{44}|-P_{4}(A/\alpha)=20.2563-15.2313\times\frac{2}{19.8}-6.625\times\frac{6.2251}{11.125}-3.8375=11.1732.
	\end{align*}
	Then $A/\alpha$ is an $SDD_{1}$ matrix.
	\end{example}

The following theorem reveals that when $\alpha$ increases from a proper subset of $N_{2}$ to $N_{2}$ itself, there is a qualitative change in the properties of $A/\alpha$, and it becomes an SDD matrix.

\begin{theorem}\rm \label{Theorem 3.2} Let $A=(a_{ij})\in \mathbb C^{n\times n}$ be an $SDD_{1}$ matrix. If $\alpha=N_{2}$, then for any $j_{t}\in \overline\alpha$, we have
	\begin{align}
		|a^{\prime}_{tt}|-R_{t}(A/\alpha)\geq |a_{j_{t}j_{t}}|-R^{N_{1}}_{j_{t}}(A)-\underset{h\in N_{2}}\sum\frac{|a_{j_{t}h}|}{|a_{hh}|}P_{h}(A)\geq |a_{j_{t}j_{t}}|-P_{j_{t}}(A)>0.\label{eq 3.6}
	\end{align}
\end{theorem}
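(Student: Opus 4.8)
The plan is to adapt the argument of Theorem \ref{Theorem 3.1} to the boundary case $\alpha=N_{2}$, where everything simplifies because $\overline\alpha=N_{1}$. First I would record the standing facts needed: $A(\alpha)=A(N_{2})$ is SDD --- for each $i\in N_{2}$ one has $|a_{ii}|>R_{i}(A)\ge R^{N_{2}}_{i}(A)$ --- hence an $H$-matrix and nonsingular, so $A/\alpha$ and all the quantities $\Delta_{j_{t}j_{u}}$ of \eqref{eq3.1}--\eqref{eq3.2} are well defined; moreover $|\alpha|=|N_{2}|<n$ since $N_{1}\ne\emptyset$, so Lemma \ref{Lemma 3.1} applies with this $\alpha$.

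Next, copying the opening chain of the proof of Lemma \ref{Lemma 3.2}(i) from \eqref{eq3.2}, and using $\overline\alpha=N_{1}$ together with $j_{t}\notin\alpha$, I would obtain
\[
|a^{\prime}_{tt}|-R_{t}(A/\alpha)\ \ge\ |a_{j_{t}j_{t}}|-R^{N_{1}}_{j_{t}}(A)-\sum_{j_{u}\in\overline\alpha}\Delta_{j_{t}j_{u}}.
\]
The only substantial step is the sharp estimate of the $\Delta$-sum. Writing $\sum_{j_{u}\in\overline\alpha}\Delta_{j_{t}j_{u}}=|a_{j_{t}\alpha}|< A(\alpha)>^{-1}\bigl(R^{\overline\alpha}_{i_{1}}(A),\dots,R^{\overline\alpha}_{i_{k}}(A)\bigr)^{T}$ (the $s$-th entry of the column vector is $\sum_{j_{u}\in\overline\alpha}|a_{i_{s}j_{u}}|=R^{\overline\alpha}_{i_{s}}(A)$ since $i_{s}\notin\overline\alpha$) and invoking Lemma \ref{Lemma 3.1} with $x_{t}=R^{\overline\alpha}_{i_{t}}(A)$ gives
\[
< A(\alpha)>^{-1}\bigl(R^{\overline\alpha}_{i_{1}}(A),\dots\bigr)^{T}\ \le\ \Bigl(\tfrac{R^{\overline\alpha}_{i_{1}}(A)+Q^{\alpha}_{i_{1}}(A)}{|a_{i_{1}i_{1}}|},\dots,\tfrac{R^{\overline\alpha}_{i_{k}}(A)+Q^{\alpha}_{i_{k}}(A)}{|a_{i_{k}i_{k}}|}\Bigr)^{T}.
\]
Here is exactly the point where $\alpha=N_{2}$ is used: for every $i_{s}\in\alpha=N_{2}$ we have $R^{\overline\alpha}_{i_{s}}(A)=R^{N_{1}}_{i_{s}}(A)$ and $Q^{\alpha}_{i_{s}}(A)=Q^{N_{2}}_{i_{s}}(A)$, so $R^{\overline\alpha}_{i_{s}}(A)+Q^{\alpha}_{i_{s}}(A)=P_{i_{s}}(A)$. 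Multiplying through by the nonnegative row $|a_{j_{t}\alpha}|$ (Lemma \ref{Lemma 2.2}) then yields $\sum_{j_{u}\in\overline\alpha}\Delta_{j_{t}j_{u}}\le\sum_{h\in N_{2}}\frac{|a_{j_{t}h}|}{|a_{hh}|}P_{h}(A)$, which is precisely the middle expression in \eqref{eq 3.6}.

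Finally, for the second inequality I would use the elementary fact $P_{h}(A)\le R_{h}(A)$ for every $h\in N_{2}$ (immediate from $R_{j}(A)/|a_{jj}|<1$ when $j\in N_{2}$), so that $\sum_{h\in N_{2}}\frac{|a_{j_{t}h}|}{|a_{hh}|}P_{h}(A)\le\sum_{h\in N_{2}}|a_{j_{t}h}|\frac{R_{h}(A)}{|a_{hh}|}=Q^{N_{2}}_{j_{t}}(A)$, whence the middle term is bounded below by $|a_{j_{t}j_{t}}|-R^{N_{1}}_{j_{t}}(A)-Q^{N_{2}}_{j_{t}}(A)=|a_{j_{t}j_{t}}|-P_{j_{t}}(A)>0$, the strict positivity being the $SDD_{1}$ hypothesis on row $j_{t}$. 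I do not expect a real obstacle: the case $\alpha=N_{2}$ is strictly easier than Theorem \ref{Theorem 3.1}, because $\overline\alpha=N_{1}$ contains no ``$N_{2}$-type'' rows, so no analogue of Lemma \ref{Lemma 3.3} and no splitting into $\widetilde N_{1},\widetilde N_{2}$ is needed and the SDD conclusion drops out directly from the diagonal-dominance estimate; the only care required is the bookkeeping of the index ranges of the various sums once they have been restricted to $\overline\alpha$.
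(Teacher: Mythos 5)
Your proof is correct and follows essentially the same route as the paper's: the same opening estimate via \eqref{eq3.2}, the same application of Lemma \ref{Lemma 3.1} with $x_{t}=R^{\overline\alpha}_{i_{t}}(A)$ to bound $\sum_{j_{u}\in N_{1}}\Delta_{j_{t}j_{u}}$ by $\sum_{h\in N_{2}}\frac{|a_{j_{t}h}|}{|a_{hh}|}P_{h}(A)$, and the final comparison $P_{h}(A)\le R_{h}(A)$ giving $|a_{j_{t}j_{t}}|-P_{j_{t}}(A)>0$. The only difference is that you spell out the last inequality, which the paper leaves implicit.
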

\begin{proof}
	Note that $\alpha=N_{2}$, then
	\begin{align*}
	|a^{\prime}_{tt}|-R_{t}(A/\alpha)&=|a^{\prime}_{tt}|-\underset{j_{u}\in N_{1}\backslash \{j_{t}\}}\sum|a^{\prime}_{tu}|\\
	&\geq |a_{j_{t}j_{t}}|-R^{N_{1}}_{j_{t}}(A)-\underset{j_{u}\in N_{1}}\sum\Delta_{j_{t}j_{u}}.
	\end{align*}
	Utilize Lemma \ref{Lemma 3.1}, it follows that
	\begin{align*}
		\underset{j_{u}\in N_{1}}\sum\Delta_{j_{t}j_{u}}&=|a_{j_{t}\alpha}|< A(\alpha) \notag >^{-1}\left(R^{N_{1}}_{i_{1}}(A),R^{N_{1}}_{i_{2}}(A),\dots,R^{N_{1}}_{i_{k}}(A)\right)^{T}\leq \underset{h\in N_{2}}\sum \frac{|a_{j_{t}h}|}{|a_{hh}|}P_{h}(A).\\ 
	\end{align*}
	Therefore \vspace{-1.5ex}
	\begin{align*}
		|a^{\prime}_{tt}|-R_{t}(A/\alpha)\geq  |a_{j_{t}j_{t}}|-R^{N_{1}}_{j_{t}}(A)- \underset{h\in N_{2}}\sum \frac{|a_{j_{t}h}|}{|a_{hh}|}P_{h}(A)\geq  |a_{j_{t}j_{t}}|-P_{j_{t}}(A)>0.
		\end{align*}
		The proof is completed.
\end{proof}

\begin{cor}\rm  Let $A=(a_{ij})\in \mathbb C^{n\times n}$ be an $SDD_{1}$ matrix. If $N_{2}\subseteq \alpha \subsetneq N$, then $A/\alpha$ is an SDD matrix.
\end{cor}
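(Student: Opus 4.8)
The plan is to reduce the general case $N_{2}\subseteq\alpha\subsetneq N$ to Theorem~\ref{Theorem 3.2} by means of the Quotient formula (Lemma~\ref{Lemma 2.3}). First I would check that the two Schur complements in play are well defined: since $A$ is an $SDD_{1}$ matrix it is an $H$-matrix, hence so is every principal submatrix of $A$; in particular $A(\alpha)$ is nonsingular, and $A(N_{2})$ is an SDD matrix by the very definition of $N_{2}$, hence nonsingular as well. If $\alpha=N_{2}$, then Theorem~\ref{Theorem 3.2} gives $|a'_{tt}|-R_{t}(A/\alpha)\geq|a_{j_{t}j_{t}}|-P_{j_{t}}(A)>0$ for every $j_{t}\in\overline\alpha$, so every row of $A/\alpha$ is strictly diagonally dominant and $A/\alpha$ is an SDD matrix; this settles the boundary case with no further work.

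Next, assume $N_{2}\subsetneq\alpha\subsetneq N$. I would apply Lemma~\ref{Lemma 2.3} with $B=A(\alpha)$ and $C=A(N_{2})$: here $C$ is a proper nonsingular principal submatrix of $B$ because $N_{2}\subsetneq\alpha$ and $A(N_{2})$ is SDD. The Quotient formula then gives
\begin{align*}
	A/\alpha=A/B=(A/C)/(B/C)=(A/N_{2})\big/\big(A(\alpha)/N_{2}\big),
\end{align*}
and moreover $A(\alpha)/N_{2}=B/C$ is a proper nonsingular principal submatrix of $A/C=A/N_{2}$.

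By the case already settled, $A/N_{2}$ is an SDD matrix, and hence its principal submatrix $A(\alpha)/N_{2}$ is again SDD and, in particular, nonsingular. Thus $A/\alpha$ is the Schur complement of the SDD matrix $A/N_{2}$ with respect to one of its nonsingular principal submatrices, and I would conclude by invoking the classical fact that such a Schur complement is itself SDD. If a self-contained argument is preferred, it is a one-line repetition of the prior-construction estimate of Lemma~\ref{Lemma 3.2}(i), now carried out inside the matrix $A/N_{2}$: strict diagonal dominance of the pivot block forces its outward row-sum vector to be dominated by the comparison matrix of that block times the all-ones vector, and Lemma~\ref{Lemma 2.2} converts this into the bound $\sum_{j_{u}}\Delta_{j_{t}j_{u}}\leq R^{\beta}_{j_{t}}$ on the corresponding inner row, which is exactly what is needed to obtain strict diagonal dominance of $A/\alpha$.

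The only delicate points I anticipate are bookkeeping ones: verifying the properness and nonsingularity hypotheses of the Quotient formula for the nested pair $C=A(N_{2})\subsetneq B=A(\alpha)$, and disposing of the SDD-inheritance step cleanly (either by citation or via the short estimate sketched above). Everything of substance is already done by Theorem~\ref{Theorem 3.2}.
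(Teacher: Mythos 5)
Your proposal is correct and follows essentially the same route as the paper: the case $\alpha=N_{2}$ is settled by Theorem \ref{Theorem 3.2}, and the case $N_{2}\subsetneq\alpha\subsetneq N$ is handled via the Quotient formula $A/\alpha=(A/N_{2})/(A(\alpha)/N_{2})$ together with the classical fact that Schur complements of SDD matrices are SDD. Your extra care in checking the nonsingularity/properness hypotheses is a harmless refinement of the paper's argument, not a different approach.
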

\begin{proof}
	When $\alpha=N_{2}$, by Theorem \ref{Theorem 3.2}, we have $|a^{\prime}_{tt}|-R_{t}(A/\alpha)>0$ for any $j_{t}\in \overline \alpha$ holds, by definition of SDD matrix, the $A/\alpha$ is an SDD matrix. When $N_{2}\subsetneq \alpha \subsetneq N$, we note the fact that the Schur complement of SDD matrix remains SDD matrix \cite{Liu SIAM}. Meanwhile, $A(\alpha)/N_{2}$ is a principal submatrix  of $A/N_{2}$. Since $A/N_{2}$ is already an SDD matrix and by Lemma \ref{Lemma 2.3}, it follows that
	\begin{align*}
		A/\alpha=(A/N_{2})/(A(\alpha)/N_{2})
	\end{align*}
	is an SDD matrix. The proof is completed.
\end{proof}
\begin{remark}\rm
	It is a classical and commonly used method to discuss the Schur complement property of $H$-matrix by using Quotient formula, but it has a significant shortcoming,  it can not get exact numerical results.  Recalling Theorem \ref{Theorem 3.2}, we know that when $\alpha =N_{2}$. Then, for any $j_{t}\in \overline \alpha$ 
	\begin{align*}
			|a^{\prime}_{tt}|-R_{t}(A/\alpha)\geq |a_{j_{t}j_{t}}|-R^{N_{1}}_{j_{t}}(A)-\underset{h\in N_{2}}\sum\frac{|a_{j_{t}h}|}{|a_{hh}|}P_{h}(A)>0.
	\end{align*}
	 We naturally have a question: When $N_{2}\subsetneq \alpha \subsetneq N$,what is the accurate positive lower bound for $|a^{\prime}_{tt}|-R_{t}(A/\alpha)$?  Next, we use the prior construction method to give precise numerical results to answer this question.
	\end{remark}
	
\begin{theorem}\rm 
 Let $A=(a_{ij})\in \mathbb C^{n\times n}$ be an $SDD_{1}$ matrix. If $N_{2}\subsetneq \alpha \subsetneq N$, then for any $j_{t}\in \overline\alpha$, we have
 \begin{align}
 	|a^{\prime}_{tt}|-R_{t}(A/\alpha)\geq |a_{j_{t}j_{t}}|-R^{\overline \alpha}_{j_{t}}(A)-\underset{h\in \alpha}\sum\frac{|a_{j_{t}h}|}{|a_{hh}|}P_{h}(A)\geq |a_{j_{t}j_{t}}|-P_{j_{t}}(A)>0.\label{eq 3.7}
 \end{align}

\end{theorem}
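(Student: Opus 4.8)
The strategy is to mimic the structure of the proof of Theorem~\ref{Theorem 3.2}, but now accounting for the extra indices in $\alpha \setminus N_2$ (which belong to $N_1$). Since $N_2 \subsetneq \alpha$, the set $\alpha$ splits as $\alpha = N_2 \cup (\alpha \cap N_1)$, and the complement satisfies $\overline\alpha \subseteq N_1$; in particular every $j_t \in \overline\alpha$ lies in $N_1$, so $R_t(A/\alpha) = \sum_{j_u \in \overline\alpha \setminus \{j_t\}} |a'_{tu}|$ is the \emph{full} row sum. First I would use \eqref{eq3.2} to write
\begin{align*}
	|a'_{tt}| - R_t(A/\alpha) \geq |a_{j_tj_t}| - \underset{j_u \in \overline\alpha \setminus \{j_t\}}\sum |a_{j_tj_u}| - \underset{j_u \in \overline\alpha}\sum \Delta_{j_tj_u} = |a_{j_tj_t}| - R^{\overline\alpha}_{j_t}(A) - \underset{j_u \in \overline\alpha}\sum \Delta_{j_tj_u}.
\end{align*}
So the whole task reduces to bounding $\sum_{j_u \in \overline\alpha}\Delta_{j_tj_u}$ from above by $\sum_{h\in\alpha}\frac{|a_{j_th}|}{|a_{hh}|}P_h(A)$.

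The key difficulty — and the reason Lemma~\ref{Lemma 3.1} does not apply verbatim — is that $\alpha$ is no longer contained in $N_2$, so $A(\alpha)$ need not be an SDD matrix and $\langle A(\alpha)\rangle^{-1}$ is not immediately available as nonnegative. However, $A$ is an $SDD_1$ matrix, hence an $H$-matrix, and every principal submatrix of an $H$-matrix is an $H$-matrix; thus $A(\alpha)$ is an $H$-matrix and $\langle A(\alpha)\rangle^{-1} \geq 0$ by Lemma~\ref{Lemma 2.1}, so the prior-construction idea still goes through. The plan is to apply the prior-construction mechanism to $A(\alpha)$ directly: set $x = \bigl(R^{\overline\alpha}_{i_1}(A),\dots,R^{\overline\alpha}_{i_k}(A)\bigr)^T$, so that $\sum_{j_u\in\overline\alpha}\Delta_{j_tj_u} = |a_{j_t\alpha}|\langle A(\alpha)\rangle^{-1} x$, and construct a fictitious positive vector $y$ with $y_h = P_h(A)/|a_{hh}|$ for $h\in\alpha$ (note $0 < P_h(A) < |a_{hh}|$ since $A$ is $SDD_1$). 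It then suffices to verify the componentwise inequality $x \leq \langle A(\alpha)\rangle y$, i.e. for each $i_t\in\alpha$,
\begin{align*}
	|a_{i_ti_t}|\,y_{i_t} - \underset{i_u\in\alpha\setminus\{i_t\}}\sum |a_{i_ti_u}|\,y_{i_u} \;\geq\; R^{\overline\alpha}_{i_t}(A).
\end{align*}

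The verification of this inequality is the main obstacle, and it is where the $SDD_1$ hypothesis must be used in full. Substituting $y_h = P_h(A)/|a_{hh}|$, the left side becomes $P_{i_t}(A) - \sum_{i_u\in\alpha\setminus\{i_t\}}|a_{i_ti_u}|\frac{P_{i_u}(A)}{|a_{i_ui_u}|}$. I would expand $P_{i_t}(A) = R^{N_1}_{i_t}(A) + Q^{N_2}_{i_t}(A)$ and split the index set $N$ appearing implicitly into the pieces $\alpha\cap N_1$, $\overline\alpha$ (which is inside $N_1$), and $N_2$ (inside $\alpha$). The terms of $P_{i_t}(A)$ indexed by $\overline\alpha$ contribute exactly $R^{\overline\alpha}_{i_t}(A)$ (these indices lie in $N_1$, so they enter $P_{i_t}$ with weight $1$), which is the quantity we want on the right. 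The remaining terms of $P_{i_t}(A)$ are indexed by $\alpha\setminus\{i_t\}$: those in $\alpha\cap N_1$ enter with weight $1$, those in $N_2$ enter with weight $R_{i_u}(A)/|a_{i_ui_u}|$. Since $P_{i_u}(A)/|a_{i_ui_u}| \le 1$ always, and $P_{i_u}(A)/|a_{i_ui_u}| \le R_{i_u}(A)/|a_{i_ui_u}|$ when $i_u\in N_2$ (because $P_{i_u}(A)\le R_{i_u}(A)$ by comparing \eqref{eq 2.1} with $R_{i_u}(A) = R^{N_1}_{i_u}(A)+R^{N_2}_{i_u}(A)$ and using $R_j(A)/|a_{jj}| < 1$ for $j\in N_2$), the subtracted sum $\sum_{i_u\in\alpha\setminus\{i_t\}}|a_{i_ti_u}|\frac{P_{i_u}(A)}{|a_{i_ui_u}|}$ is dominated term-by-term by the corresponding $\alpha\setminus\{i_t\}$-part of $P_{i_t}(A)$. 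Hence the left side minus $R^{\overline\alpha}_{i_t}(A)$ is nonnegative, establishing $x \le \langle A(\alpha)\rangle y$. Applying Lemma~\ref{Lemma 2.2} gives $\langle A(\alpha)\rangle^{-1}x \le y$, whence $\sum_{j_u\in\overline\alpha}\Delta_{j_tj_u} = |a_{j_t\alpha}|\langle A(\alpha)\rangle^{-1}x \le |a_{j_t\alpha}|\,y = \sum_{h\in\alpha}\frac{|a_{j_th}|}{|a_{hh}|}P_h(A)$. Combining with the first display yields $|a'_{tt}| - R_t(A/\alpha) \ge |a_{j_tj_t}| - R^{\overline\alpha}_{j_t}(A) - \sum_{h\in\alpha}\frac{|a_{j_th}|}{|a_{hh}|}P_h(A)$; finally, bounding $P_h(A)/|a_{hh}| \le R_h(A)/|a_{hh}|$ for $h\in N_2$ and using $P_h(A)/|a_{hh}|\le 1$ otherwise, together with $R^{\overline\alpha}_{j_t}(A) + \sum_{h\in\alpha}\frac{|a_{j_th}|}{|a_{hh}|}R_h(A)$-type regrouping, collapses the right side to $|a_{j_tj_t}| - P_{j_t}(A)$, which is positive by the $SDD_1$ assumption on $A$. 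This completes the proof.
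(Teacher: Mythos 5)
Your proposal is correct and follows essentially the same route as the paper: the same reduction via \eqref{eq3.2}, the same prior construction $y_h=P_h(A)/|a_{hh}|$ justified by the $H$-matrix property of the principal submatrix $A(\alpha)$, the same componentwise verification $x\le \langle A(\alpha)\rangle y$ using $P_{i_u}(A)\le |a_{i_ui_u}|$ on $\alpha\cap N_1$ and $P_{i_u}(A)\le R_{i_u}(A)$ on $N_2$, and the same final regrouping to $|a_{j_tj_t}|-P_{j_t}(A)>0$.
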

\begin{proof}
	Without loss generality, we suppose $N_{2}=\{i_{1},i_{2},\dots, i_{m}\}$ and $\alpha \cap N_{1}=\{i_{m+1},i_{m+2},\dots, i_{k}\}$. Therefore,
\begin{align}
	|a^{\prime}_{tt}|-R_{t}(A/\alpha)&=|a^{\prime}_{tt}|-\underset{j_{u}\in \overline \alpha \backslash \{j_{t}\}}\sum |a^{\prime}_{tu}| \notag\\ 
	&\geq |a_{j_{t}j_{t}}|-R^{\overline \alpha }_{j_{t}}(A)-\underset{j_{u}\in \overline \alpha}\sum \Delta_{j_{t}j_{u}}, \label{Formula 3}
\end{align}
	where 
	\begin{align}
		\underset{j_{u}\in \overline \alpha}\sum \Delta_{j_{t}j_{u}}=|a_{j_{t}\alpha}|< A(\alpha) >^{-1}\left(R^{\overline \alpha}_{i_{1}}(A),\dots, R^{\overline \alpha}_{i_{m}}(A),  R^{\overline \alpha}_{i_{m+1}}(A),\dots, R^{\overline \alpha}_{i_{k}}(A)        \right)^{T}.\label{eq 3.9}
	\end{align}
	Next, we prove that 
	\begin{align}
		&< A(\alpha)>^{-1}\left(R^{\overline \alpha}_{i_{1}}(A),\dots, R^{\overline \alpha}_{i_{m}}(A),  R^{\overline \alpha}_{i_{m+1}}(A),\dots, R^{\overline \alpha}_{i_{k}}(A)\right)^{T}\notag\\
		\leq& \left(\frac{P_{i_{1}}(A)}{|a_{i_{1}i_{1}}|},\frac{P_{i_{2}}(A)}{|a_{i_{2}i_{2}}|},\dots,\frac{P_{i_{m}}(A)}{|a_{i_{m}i_{m}}|},\frac{P_{i_{m+1}}(A)}{|a_{i_{m+1}i_{m+1}}|},\dots,\frac{P_{i_{k}}(A)}{|a_{i_{k}i_{k}}|}    \right)^{T}\leq \left(\frac{R_{i_{1}}(A)}{|a_{i_{1}i_{1}|}},\dots,\frac{R_{i_{m}}(A)}{|a_{i_{m}i_{m}|}},1,\dots,1\right)^{T}.\label{eq 3.10}
	\end{align}
	The inequality to the right of (\ref{eq 3.10}) is obviously valid. We first prove
	\begin{align}
	\left(R^{\overline \alpha}_{i_{1}}(A), R^{\overline \alpha}_{i_{2}}(A), \dots, R^{\overline \alpha}_{i_{k}}(A)\right)^{T}\leq < A(\alpha) > y\label{eq 3.11},
	\end{align}		
	where $y=(y_{1},y_{2},\dots, y_{k})$ and $y_{t}=\frac{P_{i_{t}}(A)}{|a_{i_{t}i_{t}}|}$. Consider $t$-th of $< A(\alpha) > y $ as follow as:
	\begin{align*}
	|a_{i_{t}i_{t}}|y_{t}-\underset{u \neq t}{\sum_{u=1}^{k}} |a_{i_{t}i_{u}}|y_{u}&\geq P_{i_{t}}(A)-\underset{u \neq t}{\sum_{u=1}^{m}}|a_{i_{t}i_{u}}| \frac{P_{i_{u}}(A)}{|a_{i_{u}i_{u}}|}-\underset{u \neq t}{\sum_{u=m+1}^{k}}|a_{i_{t}i_{u}}|\\
	&\geq P_{i_{t}}(A)-Q^{N_{2}}_{i_{t}}(A)-R^{\alpha  \cap N_{1}}_{i_{t}}(A)=R^{\overline \alpha}_{i_{t}}(A).
	\end{align*}
	 Due to the arbitrariness of $t$, we have  (\ref{eq 3.11}) holds. Note that any principal submatrix of an $H$-matrix is still an $H$-matrix \cite{Zfz}. Hence $< A(\alpha)>^{-1}\geq 0$.   By the Lemma \ref{Lemma 2.2}, it follows that (\ref{eq 3.10}) holds.
	Therefore, we have
	\begin{align}
			\underset{j_{u}\in \overline \alpha}\sum \Delta_{j_{t}j_{u}}\leq \underset{h\in \alpha}\sum \frac{|a_{j_{t}h}|}{|a_{hh}|}P_{h}(A)\leq Q^{N_{2}}_{j_{t}}(A)+R^{\alpha \cap N_{1}}_{j_{t}}(A) . \label{eq 3.12}
	\end{align}
	By (\ref{Formula 3}) and (\ref{eq 3.12}), we have (\ref{eq 3.7}). The proof is completed.
\end{proof}

\section{New upper bound of $	\left\|A^{-1}\right\|_{\infty}$  }\label{section.4}
The condition number $\kappa(A)=	{\left\|A\right\|_{\infty}}	\left\|A^{-1}\right\|_{\infty} $   is commonly employed to assess whether a linear system   $Ax=b$ is ill-conditioned or well-behaved, which involves calculating $	\left\|A^{-1}\right\|_{\infty}$.  Moreover, in the linear complementarity problem,  $	\left\|A^{-1}\right\|_{\infty}$ is used to measure the magnitude of the error bound.  In this section, we will give an infinity norm upper bound of the inverse of the $SDD_{1}$ matrix. Let us review some of the conclusions that already exist.

\begin{lem}\rm \cite{Liu1} Let $A=(a_{ij})\in \mathbb C^{n\times n} (n\geq 2)$ be an SDD matrix, then 
	\begin{align*}
		\left\|A^{-1}\right\|_{\infty}\leq   \underset{\underset{i\neq j}{i,j\in N}}{\rm max}\frac{|a_{jj}|+R_{i}(A)}{|a_{ii}||a_{jj}|-R_{i}(A)R_{j}(A)}
	\end{align*}\label{Lemma 5.1}
	\end{lem}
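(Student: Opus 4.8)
The plan is to reduce the estimate to the case of an $M$-matrix and then run a two-index maximum argument. First I would use Lemma~\ref{Lemma 2.1}: since an SDD matrix is an $H$-matrix (hence nonsingular), $<A>^{-1}\geq |A^{-1}|$ entrywise, so $\left\|A^{-1}\right\|_{\infty}\leq\left\|<A>^{-1}\right\|_{\infty}$. Moreover $<A>$ is again an SDD matrix with exactly the same diagonal moduli $|a_{ii}|$ and the same row sums $R_i(A)$. It therefore suffices to prove the bound when $A=<A>$, i.e.\ $a_{ii}>0$, $a_{ij}\le 0$ for $i\neq j$, and $A$ is an SDD $M$-matrix; in that case $A^{-1}\geq 0$.

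Second, for such a matrix let $e=(1,\dots,1)^{T}$ and $x=A^{-1}e=(x_{1},\dots,x_{n})^{T}\geq 0$. Since $A^{-1}\geq 0$, $\left\|A^{-1}\right\|_{\infty}=\max_{i}\sum_{j}(A^{-1})_{ij}=\left\|A^{-1}e\right\|_{\infty}=\left\|x\right\|_{\infty}$. Pick $p\in N$ with $x_{p}=\max_{i\in N}x_{i}$ and, using $n\geq 2$, pick $q\in N\setminus\{p\}$ with $x_{q}=\max_{i\in N\setminus\{p\}}x_{i}$; thus $x_{j}\leq x_{q}$ for all $j\in N\setminus\{p\}$ and $x_{j}\leq x_{p}$ for all $j\in N$. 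Reading off the $p$-th and $q$-th rows of $Ax=e$ and using $a_{ii}>0$, $a_{ij}=-|a_{ij}|$ for $i\neq j$, I obtain
\begin{align*}
	a_{pp}x_{p}=1+\sum_{j\neq p}|a_{pj}|x_{j}\leq 1+R_{p}(A)\,x_{q},\qquad
	a_{qq}x_{q}=1+\sum_{j\neq q}|a_{qj}|x_{j}\leq 1+R_{q}(A)\,x_{p}.
\end{align*}

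Third, I eliminate $x_{q}$: since $a_{qq}>0$ and $R_{p}(A)\geq 0$, substituting $x_{q}\leq(1+R_{q}(A)x_{p})/a_{qq}$ into the first inequality and clearing denominators gives $\big(a_{pp}a_{qq}-R_{p}(A)R_{q}(A)\big)x_{p}\leq a_{qq}+R_{p}(A)$. Because $A$ is SDD, $a_{pp}>R_{p}(A)\ge 0$ and $a_{qq}>R_{q}(A)\ge 0$, so $a_{pp}a_{qq}-R_{p}(A)R_{q}(A)>0$, and dividing yields
\begin{align*}
	\left\|A^{-1}\right\|_{\infty}=x_{p}\leq\frac{a_{qq}+R_{p}(A)}{a_{pp}a_{qq}-R_{p}(A)R_{q}(A)}\leq\max_{\substack{i,j\in N\\ i\neq j}}\frac{|a_{jj}|+R_{i}(A)}{|a_{ii}||a_{jj}|-R_{i}(A)R_{j}(A)}.
\end{align*}
Combined with the reduction step, which preserves all the $|a_{ii}|$ and $R_i(A)$, this is the claimed inequality.

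The only genuinely delicate points are the reduction to the $M$-matrix sign pattern through Lemma~\ref{Lemma 2.1} --- which is what lets us pass freely between $A$ and $<A>$ without changing the right-hand side --- and the positivity of the denominator $a_{pp}a_{qq}-R_{p}(A)R_{q}(A)$, which is exactly where strict diagonal dominance (and the hypothesis $n\geq 2$, needed to secure an index $q\neq p$) enters. Everything else is routine algebra.
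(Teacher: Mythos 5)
Your proof is correct. Note first that the paper does not prove this statement at all: Lemma \ref{Lemma 5.1} is quoted verbatim from the reference [Liu1] (Liu, Zhang, Liu, \emph{Linear Algebra Appl.} 2012), so there is no in-paper argument to compare against. Your argument is the standard one for bounds of this "doubly diagonally dominant" type: estimate the largest and second-largest components and eliminate. The two essential points — that $q$ can be chosen different from $p$ (this is where $n\geq 2$ enters) and that strict diagonal dominance makes every denominator $|a_{ii}||a_{jj}|-R_{i}(A)R_{j}(A)$ positive — are both handled correctly, and the elimination algebra is right. The only remark worth making is that your preliminary reduction to the $M$-matrix case via Lemma \ref{Lemma 2.1} is convenient but not necessary: for a general complex SDD matrix one can take any $x$ with $\left\|x\right\|_{\infty}\leq 1$, set $y=A^{-1}x$, pick $p$ with $|y_{p}|=\left\|y\right\|_{\infty}$ and $q$ with $|y_{q}|=\max_{j\neq p}|y_{j}|$, and run exactly your two-row estimate on $|y_{p}|,|y_{q}|$ using the triangle inequality in rows $p$ and $q$ of $Ay=x$; this is essentially how the cited source argues. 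Your detour through $<A>^{-1}\geq|A^{-1}|$ and $x=<A>^{-1}e$ buys a slightly cleaner bookkeeping (nonnegativity lets you work with row sums rather than moduli) at the cost of invoking the $H$-matrix machinery, and it preserves the right-hand side because $<A>$ has the same $|a_{ii}|$ and $R_{i}(A)$, so the conclusion is unaffected.
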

	
	\begin{lem}\rm  \cite{Yong} \label{Lemma 5.2}
		Let $A=(a_{ij})\in \mathbb C^{n\times n}$ be an SDD matrix, and $B=(b_{ij})\in \mathbb C^{n\times m}$. Then 
		\begin{align*}
			\left\|A^{-1}B\right\|_{\infty} \leq \underset{i\in N}{\rm max} \frac{\sum_{j=1}^{m}|b_{ij}|}{|a_{ii}|-R_{i}(A)}.
		\end{align*}
	\end{lem}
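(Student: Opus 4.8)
The plan is to turn the matrix inequality into a single scalar estimate read off from one well-chosen row. Write $C=A^{-1}B$, so that $AC=B$, and use the standard identity $\left\|C\right\|_{\infty}=\max_{\|x\|_{\infty}\le 1}\|Cx\|_{\infty}$. First I would fix $x\in\mathbb{C}^{m}$ with $\|x\|_{\infty}\le 1$, set $y=Cx$, and record that $Ay=Bx$. The $i$-th equation then reads $a_{ii}y_{i}=(Bx)_{i}-\sum_{j\in N\setminus\{i\}}a_{ij}y_{j}$, so, bounding $|(Bx)_{i}|\le\sum_{j=1}^{m}|b_{ij}|\,|x_{j}|\le\sum_{j=1}^{m}|b_{ij}|$ and $\sum_{j\in N\setminus\{i\}}|a_{ij}|\,|y_{j}|\le R_{i}(A)\|y\|_{\infty}$, one obtains
\begin{align*}
|a_{ii}|\,|y_{i}|\le \sum_{j=1}^{m}|b_{ij}|+R_{i}(A)\,\|y\|_{\infty}\qquad(i\in N).
\end{align*}

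Next I would choose an index $i_{0}$ with $|y_{i_{0}}|=\|y\|_{\infty}$ and specialise the above display to $i=i_{0}$; rearranging and using that $A$ is SDD (so $|a_{i_{0}i_{0}}|-R_{i_{0}}(A)>0$) gives
\begin{align*}
\|Cx\|_{\infty}=\|y\|_{\infty}\le\frac{\sum_{j=1}^{m}|b_{i_{0}j}|}{|a_{i_{0}i_{0}}|-R_{i_{0}}(A)}\le\max_{i\in N}\frac{\sum_{j=1}^{m}|b_{ij}|}{|a_{ii}|-R_{i}(A)}.
\end{align*}
Taking the supremum over all $x$ with $\|x\|_{\infty}\le 1$ then yields the asserted bound.

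An alternative, more structural route that fits the ``prior construction'' philosophy of the paper would be the following. Since an SDD matrix is an $H$-matrix, Lemma~\ref{Lemma 2.1} gives $|C|=|A^{-1}B|\le|A^{-1}|\,|B|\le < A>^{-1}|B|$, with $< A>^{-1}\ge 0$. Setting $s_{i}=\sum_{j=1}^{m}|b_{ij}|$ and $\gamma=\max_{k\in N}\frac{s_{k}}{|a_{kk}|-R_{k}(A)}$, I would construct the flat vector $u=\gamma(1,\dots,1)^{T}$, verify that $< A>u\ge s$ componentwise (the $i$-th component equals $\gamma(|a_{ii}|-R_{i}(A))\ge s_{i}$), and invoke Lemma~\ref{Lemma 2.2} to conclude $< A>^{-1}s\le u$, whence $\|C\|_{\infty}\le\bigl\|< A>^{-1}|B|\bigr\|_{\infty}=\max_{i\in N}(< A>^{-1}s)_{i}\le\gamma$. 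In either approach there is no genuine difficulty; the only step warranting a little care is the reduction from the induced matrix norm to the scalar inequality at the maximizing row $i_{0}$ (equivalently, in the second route, checking that the flat vector $u$ dominates $s$ under $< A>$, i.e.\ exactly the prior-construction step used throughout the paper).
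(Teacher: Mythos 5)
Your proof is correct; note that the paper does not prove this lemma at all but simply cites it from Yong's paper, and your first argument (evaluating the row equation $Ay=Bx$ at a maximizing component $i_{0}$ and using $|a_{i_{0}i_{0}}|-R_{i_{0}}(A)>0$ to solve for $\|y\|_{\infty}$) is exactly the standard proof of that cited result. Your alternative route via $|A^{-1}B|\le \,< A>^{-1}|B|$ and the flat comparison vector is also sound and, as you observe, is the same prior-construction device the paper uses elsewhere (Lemma \ref{Lemma 3.1}), so either version suffices.
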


\begin{lem}\rm \cite{Baoji} \label{Lemma 5.3}
	Let $A=(a_{ij})\in \mathbb C^{n\times n} (n\geq 2)$ be an $SDD_{1}$ matrix, then
	\begin{align*}
			\left\|A^{-1}\right\|_{\infty}\leq \frac{{\rm max}\{1,\underset{i\in N_{2}}{\rm max}\frac{p_{i}(A)}{|a_{ii}|}+\epsilon  \}}                          {{\rm min }\{\underset{i\in N_{1}}{\rm min}H_{i},   \underset{i\in N_{2}}{\rm min}Q_{i} \}                 },
		\end{align*} \label{Lemma 4.2}
where
\begin{align*}
	H_{i}=|a_{ii}|-R^{N_{1}}_{i}(A)-\underset{j\in N_{2}\backslash \{i\}}\sum |a_{ij}|\left(\frac{p_{j}(A)}{|a_{jj}|}+\epsilon \right), \quad i\in N_{1},
\end{align*}	
\begin{align*}
	Q_{i}=\epsilon\left(|a_{ii}|-R^{N_{2}}_{i}(A)\right)+\underset{j\in N_{2}\backslash \{i\}}\sum|a_{ij}|\frac{R_{j}(A)-P_{j}(A)}{|a_{jj}|}, \quad i\in N_{2}.
\end{align*}
	The parameter $\epsilon$ need satisfy $	0<\epsilon<\underset{i\in N}{\rm min}\frac{|a_{ii}|-P_{i}(A)}{R^{N_{2}}_{i}(A)}$.
\end{lem}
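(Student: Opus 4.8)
\noindent The plan is to scale $A$ by an explicit positive diagonal matrix $D$ so that $AD$ becomes strictly diagonally dominant with row dominance gaps equal to the quantities $H_{i}$ and $Q_{i}$, and then to combine the elementary infinity-norm estimate for SDD matrices with the factorisation $A^{-1}=D(AD)^{-1}$ and submultiplicativity of $\|\cdot\|_{\infty}$. Concretely, I would take $D$ to be the diagonal matrix whose $i$-th entry is $d_{i}=1$ for $i\in N_{1}$ and $d_{i}=\frac{P_{i}(A)}{|a_{ii}|}+\epsilon$ for $i\in N_{2}$ (so $p_{i}(A)$ in the statement is just $P_{i}(A)$); each $d_{i}>0$ since $\epsilon>0$.

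\noindent The first step is to verify that $B:=AD=(b_{ij})$ is SDD and to read off its dominance gaps. For a row $i\in N_{1}$ one gets $|b_{ii}|=|a_{ii}|$ and $R_{i}(B)=R^{N_{1}}_{i}(A)+\sum_{j\in N_{2}}|a_{ij}|\bigl(\frac{P_{j}(A)}{|a_{jj}|}+\epsilon\bigr)$, so $|b_{ii}|-R_{i}(B)=H_{i}$; positivity follows from $P_{j}(A)\le R_{j}(A)$ for $j\in N_{2}$ (hence $R^{N_{1}}_{i}(A)+\sum_{j\in N_{2}}|a_{ij}|\frac{P_{j}(A)}{|a_{jj}|}\le P_{i}(A)$) together with the admissible range $\epsilon<\min_{k\in N}\frac{|a_{kk}|-P_{k}(A)}{R^{N_{2}}_{k}(A)}$, which forces $\epsilon R^{N_{2}}_{i}(A)<|a_{ii}|-P_{i}(A)$. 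For a row $i\in N_{2}$ one gets $|b_{ii}|=P_{i}(A)+\epsilon|a_{ii}|$; invoking the identity $P_{i}(A)-R^{N_{1}}_{i}(A)=Q^{N_{2}}_{i}(A)=\sum_{j\in N_{2}\setminus\{i\}}|a_{ij}|\frac{R_{j}(A)}{|a_{jj}|}$, a short rearrangement gives $|b_{ii}|-R_{i}(B)=\sum_{j\in N_{2}\setminus\{i\}}|a_{ij}|\frac{R_{j}(A)-P_{j}(A)}{|a_{jj}|}+\epsilon\bigl(|a_{ii}|-R^{N_{2}}_{i}(A)\bigr)=Q_{i}$, which is positive since $R_{j}(A)\ge P_{j}(A)$ and $|a_{ii}|>R_{i}(A)\ge R^{N_{2}}_{i}(A)$. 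Thus $B$ is SDD and $\min_{i}\bigl(|b_{ii}|-R_{i}(B)\bigr)=\min\{\min_{i\in N_{1}}H_{i},\ \min_{i\in N_{2}}Q_{i}\}$.

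\noindent The second step uses the fact that every SDD matrix $B$ obeys $\|B^{-1}\|_{\infty}\le\bigl(\min_{i}(|b_{ii}|-R_{i}(B))\bigr)^{-1}$. With $e=(1,\dots,1)^{T}$ and $\delta=\min_{i}(|b_{ii}|-R_{i}(B))$, the comparison matrix $\langle B\rangle$ is an $M$-matrix with $\langle B\rangle^{-1}\ge0$ and $\langle B\rangle e\ge\delta e$; Lemma \ref{Lemma 2.2} then yields $\langle B\rangle^{-1}e\le\delta^{-1}e$, so by Lemma \ref{Lemma 2.1} we get $\|B^{-1}\|_{\infty}\le\|\langle B\rangle^{-1}\|_{\infty}=\|\langle B\rangle^{-1}e\|_{\infty}\le\delta^{-1}$. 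Since $D$ is diagonal, $\|D\|_{\infty}=\max_{i}d_{i}=\max\{1,\ \max_{i\in N_{2}}\frac{P_{i}(A)}{|a_{ii}|}+\epsilon\}$. Combining $\|A^{-1}\|_{\infty}=\|D(AD)^{-1}\|_{\infty}\le\|D\|_{\infty}\,\|(AD)^{-1}\|_{\infty}$ with the two estimates just derived produces exactly the asserted bound.

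\noindent The main obstacle is the bookkeeping in the first step: one has to split every row sum cleanly between $N_{1}$ and $N_{2}$, track all the $\epsilon$-terms, and insert the inequalities $P_{j}(A)\le R_{j}(A)$ (for $j\in N_{2}$) and the bound on $\epsilon$ at precisely the right points so that each $H_{i}$ and each $Q_{i}$ comes out \emph{strictly} positive — it is this strictness that makes $AD$ genuinely SDD, so that the SDD inverse estimate applies. Everything after that is routine manipulation of the infinity norm.
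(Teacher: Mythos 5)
The paper does not actually prove this lemma --- it is quoted from \cite{Baoji} --- so there is no in-paper proof to compare against. Your argument is correct and is exactly the scaling proof one expects behind the cited result: the diagonal matrix $D$ with $d_{i}=1$ on $N_{1}$ and $d_{i}=\frac{P_{i}(A)}{|a_{ii}|}+\epsilon$ on $N_{2}$ makes $AD$ strictly diagonally dominant with row gaps precisely $H_{i}$ ($i\in N_{1}$, using $P_{j}(A)\leq R_{j}(A)$ and the admissible range of $\epsilon$) and $Q_{i}$ ($i\in N_{2}$), the Varah-type bound $\|(AD)^{-1}\|_{\infty}\leq \bigl(\min_{i}(|b_{ii}|-R_{i}(AD))\bigr)^{-1}$ is correctly derived from Lemma \ref{Lemma 2.1} and Lemma \ref{Lemma 2.2}, and $\|A^{-1}\|_{\infty}\leq\|D\|_{\infty}\|(AD)^{-1}\|_{\infty}$ yields the numerator $\max\{1,\max_{i\in N_{2}}\frac{P_{i}(A)}{|a_{ii}|}+\epsilon\}$. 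The only minor caveat is that the stated $\epsilon$-range must be read with the convention $\frac{|a_{ii}|-P_{i}(A)}{R^{N_{2}}_{i}(A)}=+\infty$ when $R^{N_{2}}_{i}(A)=0$, which your positivity argument for $H_{i}$ implicitly accommodates.
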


Below, we review the use of the Schur complement to represent the inverse of a matrix.	Let $A=(a_{ij})\in \mathbb C^{n\times n}$ is an nonsingular matrix, if
		\begin{align*}
		A=\left(
		\begin{array}{cc}
			A(\alpha)&A(\alpha,\overline \alpha)\\
			A(\overline \alpha,\alpha)&A(\overline \alpha)\\
		\end{array}
		\right),
	\end{align*}
	where $A(\alpha)$ is an nonsingular matrix.  Then, 	there exists matrices $X, Y\in \mathbb C^{n\times n}$ such that
	\begin{align*}
		XAY=\left(
		\begin{array}{cc}
			A(\alpha)&O  \\
			O &A/\alpha   \\
		\end{array}
		\right)=:\Lambda,
	\end{align*}
	where 
	\begin{align}
		X=\left(
		\begin{array}{cc}
			I_{|\alpha|}&O  \\
			-	A(\overline \alpha, \alpha)A(\alpha)^{-1} &I_{n-|\alpha|}   \\
		\end{array}
		\right) \quad {\text{and}} \quad 	Y=\left(
		\begin{array}{cc}
			I_{|\alpha|}&-A(\alpha)^{-1}A(\alpha, \overline \alpha)  \\
			O &I_{n-|\alpha|}   \\
		\end{array}
		\right).\label{XY4.1}
	\end{align}
	Therefore, we obtain that
	\begin{align*}
		A^{-1}=Y\left(
		\begin{array}{cc}
			A(\alpha)^{-1}&O  \\
			O &(A/\alpha)^{-1}   \\
		\end{array}
		\right)X=Y\Lambda^{-1}X.
	\end{align*}
 By utilizing the fundamental properties of infinity norm, we have: 
	\begin{align*}
			\left\|A^{-1}\right\|_{\infty}=	\left\|Y\Lambda^{-1}X\right\|_{\infty}\leq 	\left\|Y\right\|_{\infty}	\left\|\Lambda^{-1} X\right\|_{\infty},
	\end{align*}
	where 
	\begin{align}
		\Bigl\| \Lambda^{-1} X \Bigr\|_{\infty}
		&= \Biggl\| 
		\begin{pmatrix}
			A(\alpha)^{-1} & O \notag\\
			O & (A/\alpha)^{-1}
		\end{pmatrix}
		\begin{pmatrix}
			I_{|\alpha|} & O \\
			-A(\overline{\alpha},\alpha) A(\alpha)^{-1} & I_{n-|\alpha|}
		\end{pmatrix}
		\Biggr\|_{\infty} \\
		&= \Biggl\| 
		\begin{pmatrix}
			A(\alpha)^{-1} & O \\
			(A/\alpha)^{-1} \bigl( -A(\overline{\alpha},\alpha) A(\alpha)^{-1} \bigr) & (A/\alpha)^{-1}
		\end{pmatrix}
		\Biggr\|_{\infty} \notag \\ 
		&\leq \max \Bigl\{ 
		\bigl\| A(\alpha)^{-1} \bigr\|_{\infty},\,
		\bigl\| (A/\alpha)^{-1} \bigl( -A(\overline{\alpha},\alpha) A(\alpha)^{-1}\thinspace I_{n-|\alpha|} \bigr) \bigr\|_{\infty} 
		\Bigr\}.\label{ZJFS}
	\end{align}
Moreover, the following holds:
\begin{align*}
	\Bigl\| (A/\alpha)^{-1} \bigl( -A(\overline{\alpha},\alpha) A(\alpha)^{-1}\thinspace I_{n-|\alpha|} \bigr) \Bigr\|_{\infty}
	&\leq \Bigl\| (A/\alpha)^{-1} \Bigr\|_{\infty} \Bigl\| -A(\overline{\alpha},\alpha) A(\alpha)^{-1}\thinspace I_{n-|\alpha|} \Bigr\|_{\infty} \\
	&= \Bigl\| (A/\alpha)^{-1} \Bigr\|_{\infty} \| X \|_{\infty}.
\end{align*}
	Finally, we obtain that
	\begin{align}
			\left\|A^{-1}\right\|_{\infty}\leq 	\left\|Y\right\|_{\infty} {\rm max}\{	\left\|A(\alpha)^{-1}\right\|_{\infty},  \left\|(A/\alpha)^{-1}\right\|_{\infty}\left\|X\right\|_{\infty}\}.\label{Schur infinity 2}
	\end{align}

 Under the idea of splitting and shrinking based on  (\ref{ZJFS}), we employ  Theorem \ref{Theorem 3.2} with Lemma \ref{Lemma 5.1} and Lemma \ref{Lemma 5.2},  give the following upper bound for the infinity norm of the inverse of the $SDD_{1}$ matrix.

\begin{theorem}\label{Theorem 6}\rm
	Let $A=(a_{ij})\in \mathbb C^{n\times n}$ be an $SDD_{1}$ matrix with $|N_{2}|\geq 2$. Then
	\begin{align}
		\left\|A^{-1}\right\|_{\infty} \leq  \left(1+\underset{i\in N_{2}}{\rm max}\frac{P_{i}(A)}{|a_{ii}|}\right){\rm max}\{\phi,\psi\},\label{Formula 6.1}
	\end{align}
	where 
	\begin{align*}
		\phi= \underset{\underset{i\neq j}{i,j\in N_{2}}}{\rm max}\frac{|a_{jj}|+R^{N_{2}}_{i}(A)}{|a_{ii}||a_{jj}|-R^{N_{2}}_{i}(A)R^{N_{2}}_{j}(A)},
	\end{align*}
	and
\begin{align*}
	\psi= \underset{i\in N_{1}}{\rm max}\frac{1+\phi R^{N_{2}}_{i}(A)}{|a_{ii}|-R^{N_{1}}_{i}(A)-\underset{j\in N_{2}}\sum \frac{|a_{ij}|}{|a_{jj}|}P_{j}(A)}.
\end{align*}
\end{theorem}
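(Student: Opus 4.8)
The plan is to apply the Schur–complement splitting with the choice $\alpha=N_{2}$. Note first that $A(\alpha)=A(N_{2})$ is SDD by the very definition of the diagonally dominant set $N_{2}$, and that $A/\alpha$ is SDD by the Corollary following Theorem~\ref{Theorem 3.2}; in particular both are nonsingular, and so is $A$ by Schur's formula (Lemma~\ref{Lemma 2.4}). Hence the factorization $A^{-1}=Y\Lambda^{-1}X$ with $X,Y$ as in (\ref{XY4.1}) is legitimate, and by (\ref{ZJFS}) it suffices to bound separately $\left\|Y\right\|_{\infty}$, $\left\|A(\alpha)^{-1}\right\|_{\infty}$, and $\bigl\|(A/\alpha)^{-1}\bigl(-A(\overline\alpha,\alpha)A(\alpha)^{-1}\ \ I_{n-|\alpha|}\bigr)\bigr\|_{\infty}$.

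\emph{The factor $\left\|Y\right\|_{\infty}$.} Because of the block form of $Y$ in (\ref{XY4.1}), $\left\|Y\right\|_{\infty}=1+\left\|A(\alpha)^{-1}A(\alpha,\overline\alpha)\right\|_{\infty}$. Passing to moduli and using Lemma~\ref{Lemma 2.1} (valid since $A(\alpha)$ is an $H$-matrix), the $i_{t}$-th row sum of $|A(\alpha)^{-1}A(\alpha,\overline\alpha)|$ is at most the $t$-th coordinate of $< A(\alpha)>^{-1}x$, where $x_{s}=R^{\overline\alpha}_{i_{s}}(A)=R^{N_{1}}_{i_{s}}(A)$. Applying Lemma~\ref{Lemma 3.1} with this $x$ (whose hypothesis holds with equality), this coordinate is at most $\dfrac{x_{t}+Q^{\alpha}_{i_{t}}(A)}{|a_{i_{t}i_{t}}|}=\dfrac{R^{N_{1}}_{i_{t}}(A)+Q^{N_{2}}_{i_{t}}(A)}{|a_{i_{t}i_{t}}|}=\dfrac{P_{i_{t}}(A)}{|a_{i_{t}i_{t}}|}$. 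Taking the maximum over $i_{t}\in N_{2}$ yields $\left\|Y\right\|_{\infty}\le 1+\max_{i\in N_{2}}\frac{P_{i}(A)}{|a_{ii}|}$, which is precisely the prefactor in (\ref{Formula 6.1}).

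\emph{The two terms inside the maximum.} For the first, $A(\alpha)=A(N_{2})$ is SDD of order $|N_{2}|\ge2$ and $R_{i}(A(\alpha))=R^{N_{2}}_{i}(A)$, so Lemma~\ref{Lemma 5.1} gives $\left\|A(\alpha)^{-1}\right\|_{\infty}\le\phi$ at once. For the second, write the $(n-|\alpha|)\times n$ matrix $C:=\bigl(-A(\overline\alpha,\alpha)A(\alpha)^{-1}\ \ I_{n-|\alpha|}\bigr)$; splitting its $t$-th row $\ell_{1}$-norm into the identity block (contributing $1$) and the block $-a_{j_{t}\alpha}A(\alpha)^{-1}$ (whose $\ell_{1}$-norm is at most $\sum_{i\in N_{2}}|a_{j_{t}i}|\cdot\left\|A(\alpha)^{-1}\right\|_{\infty}\le\phi R^{N_{2}}_{j_{t}}(A)$), the $t$-th row sum of $|C|$ is at most $1+\phi R^{N_{2}}_{j_{t}}(A)$. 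Since $A/\alpha$ is SDD, Lemma~\ref{Lemma 5.2} applied to $(A/\alpha)^{-1}C$ bounds this term by $\max_{j_{t}\in\overline\alpha}\dfrac{1+\phi R^{N_{2}}_{j_{t}}(A)}{|a'_{tt}|-R_{t}(A/\alpha)}$, and, since $\alpha=N_{2}$, Theorem~\ref{Theorem 3.2} lower–bounds the denominator by $|a_{j_{t}j_{t}}|-R^{N_{1}}_{j_{t}}(A)-\sum_{h\in N_{2}}\frac{|a_{j_{t}h}|}{|a_{hh}|}P_{h}(A)>0$; as $\overline\alpha=N_{1}$, this last maximum is exactly $\psi$. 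Combining the three estimates through (\ref{ZJFS}) gives (\ref{Formula 6.1}).

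The main difficulty is not a single hard inequality but the bookkeeping that makes all three estimates collapse onto the stated closed form: recognizing that the row sums of $|A(\alpha)^{-1}A(\alpha,\overline\alpha)|$ are exactly the coordinates of $< A(\alpha)>^{-1}x$ with $x=|A(\alpha,\overline\alpha)|$ applied to the all–ones vector, and that the ``prior construction'' identity $x_{t}+Q^{\alpha}_{i_{t}}(A)=P_{i_{t}}(A)$ is precisely what produces the prefactor $1+\max_{i\in N_{2}}P_{i}(A)/|a_{ii}|$; and, on the other side, peeling the concatenated matrix $C$ into its identity block and its $-A(\overline\alpha,\alpha)A(\alpha)^{-1}$ block so that Lemma~\ref{Lemma 5.2} together with Theorem~\ref{Theorem 3.2} returns exactly $\psi$. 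One should also keep track that the hypothesis $|N_{2}|\ge2$ is used only to make Lemma~\ref{Lemma 5.1} applicable to $A(N_{2})$ and to guarantee $\phi>0$.
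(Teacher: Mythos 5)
Your proposal is correct and follows essentially the same route as the paper: take $\alpha=N_{2}$, apply the splitting (\ref{ZJFS}), bound $\left\|A(\alpha)^{-1}\right\|_{\infty}$ by $\phi$ via Lemma~\ref{Lemma 5.1}, bound the concatenated block via Lemma~\ref{Lemma 5.2} together with Theorem~\ref{Theorem 3.2} to get $\psi$, and obtain the prefactor from Lemma~\ref{Lemma 3.1} applied to $\left\|A(\alpha)^{-1}A(\alpha,\overline\alpha)\right\|_{\infty}$. The only (immaterial) differences are that you bound the row sums of $-A(\overline\alpha,\alpha)A(\alpha)^{-1}$ directly by $\phi R^{N_{2}}_{j_t}(A)$ rather than passing through comparison matrices, and you omit the explicit permutation that moves $A(N_{2})$ into the leading block, which the paper includes for rigor.
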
	
\begin{proof}
	Take $\alpha=N_{2}$.  Then there exists a permutation matrix $P$ such that
	\begin{align*}
		P^{T}AP=\left(
		\begin{array}{cc}
			A(\alpha)& A(\alpha, \overline \alpha)  \\
			A(\overline \alpha, \alpha) &A(\overline \alpha)   \\
		\end{array}
		\right).
	\end{align*}
	Furthermore, we obtain that
	\begin{align*}
		XP^{T}APY=\left(
		\begin{array}{cc}
			A(\alpha)&O  \\
			O &A/\alpha   \\
		\end{array}
		\right),
	\end{align*}
	where $X$ and $Y$ define as (\ref{XY4.1}). 	Therefore, we obtain that
	\begin{align*}
		A^{-1}=PY\left(
		\begin{array}{cc}
			A(\alpha)^{-1}&O  \\
			O &(A/\alpha)^{-1}   \\
		\end{array}
		\right)XP^{T}.
	\end{align*}
		Since the permutation matrix $P$ satisfies $\left\|P\right\|_{\infty}=1$. By (\ref{ZJFS}), it follows that
		\begin{align*}
		\left\|A^{-1}\right\|_{\infty}\leq 	\left\|Y\right\|_{\infty} {\rm max}\{	\left\|A(\alpha)^{-1}\right\|_{\infty},  \left\|(A/\alpha)^{-1} \bigl( -A(\overline{\alpha},\alpha) A(\alpha)^{-1}\thinspace I_{n-|\alpha|}\bigr) \right\|_{\infty}\}.
	\end{align*}
	Since both  $A(\alpha)$ and $A/\alpha$ are SDD matrices. From Lemma \ref{Lemma 5.1}, we have
\begin{align}
	\left\|A(\alpha)^{-1}\right\|_{\infty}\leq  	 \underset{\underset{i\neq j}{i,j\in N_{2}}}{\rm max}\frac{|a_{jj}|+R^{N_{2}}_{i}(A)}{|a_{ii}||a_{jj}|-R^{N_{2}}_{i}(A)R^{N_{2}}_{j}(A)}=\phi. \label{T61}
\end{align}	
From the fundamental property of infinity norm,  we have
	\begin{align}
		\Biggl\| (A/\alpha)^{-1} \Bigl( -A(\overline{\alpha},\alpha) A(\alpha)^{-1} \thinspace I_{n-|\alpha|} \Bigr) \Biggr\|_{\infty} 
		&\leq \Biggl\| < (A/\alpha) >^{-1} \Bigl( |A(\overline{\alpha},\alpha)| < A(\alpha) >^{-1} \thinspace I_{n-|\alpha|} \Bigr) \Biggr\|_{\infty}      \notag \\
		&\leq \max_{j_t \in \overline{\alpha}} \frac{1 + \phi R^{\alpha}_{j_t}(A)}{ |a'_{tt}| - R_t(A/\alpha) }\quad    (\text {\rm By Lemma \ref{Lemma 5.2})}\notag \\
		&\leq \max_{i \in N_1} \frac{1 + \phi R^{N_2}_i(A)}{ \displaystyle |a_{ii}| - R^{N_1}_i(A) - \!\!\! \sum_{j \in N_2} \! \frac{|a_{ij}|}{|a_{jj}|} P_j(A) } \quad   (\text {\rm By Theorem\ref{Theorem 3.2})}\notag\\
		&=\psi.\label{T62}
	\end{align}
		Note that\vspace{-1.5ex}
	\begin{align*}
		\left\|Y^{-1}\right\|_{\infty}=1+	\left\|A(\alpha)^{-1}A(\alpha, \overline \alpha)\right\|_{\infty}.
	\end{align*}
	Let $e=(1,1,\dots,1)^{T}\in \mathbb C^{n-|\alpha|}$, we have 
	\begin{align*}
		\left\|A(\alpha)^{-1}A(\alpha, \overline \alpha)\right\|_{\infty}&=	\left\||A(\alpha)^{-1}A(\alpha, \overline \alpha)|\right\|_{\infty}\\
		&\leq  \left\|<A(\alpha)>^{-1}|A(\alpha, \overline \alpha)|\right\|_{\infty}\\
		&=\left\|<A(\alpha)>^{-1}|A(\alpha, \overline \alpha)|e\right\|_{\infty}\\
		&=\left\|<A(\alpha)>^{-1} (R^{\overline \alpha}_{i_{1}}(A),\dots,R^{\overline \alpha}_{i_{k}}(A))^{T}       \right\|_{\infty}\\
		&\leq \left\|(\frac{P_{i_{1}}(A)}{|a_{i_{1}i_{1}}|},\dots,\frac{P_{i_{k}}(A)}{|a_{i_{k}i_{k}}|} )^{T}      \right\|_{\infty}=\underset{i\in N_{2}}{\rm max}\frac{P_{i}(A)}{|a_{ii}|} (\text{By Lemma \ref{Lemma 3.1}}).
	\end{align*}
	Therefore $\left\|Y^{-1}\right\|_{\infty}\leq 1+\underset{i\in N_{2}}{\rm max}\frac{P_{i}(A)}{|a_{ii}|}$. Furthermore, combine (\ref{T61}) and (\ref{T62}), we have (\ref{Formula 6.1}).
	 The proof is completed.	
\end{proof}
\begin{remark}\rm
When $|N_{2}|=1$, without loss of generality, we can assume that $N_{2}=\{i_{0}\}$. We only need to modify  $\phi$ to be $|a_{i_{0}i_{0}}|^{-1}$, and Theorem \ref{Theorem 6} still holds.
\end{remark}

\begin{example}\rm
	Consider the $SDD_{1}$ matrix
\begin{align*}
	A = \left(
	\renewcommand{\arraystretch}{1.2} 
	\setlength{\arraycolsep}{1.2em}  
	\begin{array}{@{\hspace{0.2em}} *{8}{c} @{\hspace{0.2em}} } 
		0.4506 & 0.0901 & 0.0451 & 0.0472 & 0.0912 & 0.0901 & 0.0421 & 0.1352 \\
		0.0901 & 0.5407 & 0.1352 & 0.0451 & 0.0901 & 0.4506 & 0.0471 & 0.1357 \\
		0.1352 & 0.0901 & 0.5407 & 0.0351 & 0.0611 & 0.0901 & 0.0451 & 0.0901 \\
		0.0901 & 0.0451 & 0.0451 & 0.5407 & 0.0901 & 0.2704 & 0.0428 & 0.0701 \\
		0.0261 & 0.0791 & 0.0451 & 0.0901 & 9.0118 & 0.0451 & 0.1352 & 0.0151 \\
		0.3154 & 0.0161 & 0.0901 & 0.1352 & 0.2704 & 27.0354 & 0.0451 & 0.0901 \\
		0.0142 & 0.0242 & 0.0451 & 0.1352 & 0.2704 & 0.5407 & 1.4419 & 0.2704 \\
		0.0151 & 0.0901 & 0.0451 & 0.0901 & 0.0901 & 0.0361 & 0.0241 & 9.0118
	\end{array}
	\right).
\end{align*}
We have $N_{1}=\{1,2,3,4\}$, $N_{2}=\{5,6,7,8\}$. By Theorem \ref{Theorem 6}, we obtain that
\begin{align*}
		\left\|A^{-1}\right\|_{\infty}\leq  7.6167.
\end{align*}
By Lemma \ref{Lemma 5.3}, we have
\begin{align*}
	\left\|A^{-1}\right\|_{\infty}\leq  11.1572(\epsilon=0.2122),\qquad \epsilon\in (0,0.2787).
\end{align*}
The exact value $	\left\|A^{-1}\right\|_{\infty}=3.3042$.
\end{example}

To enable a more profound exploration of the error bound estimation for the linear complementarity problem in Section \ref{section.5}, we introduce here the concept of the  $S$-$SDD_{1}$  matrix. In fact, this concept provides a more  precise description of the $SDD_{1}$ matrix.

\begin{definition} \rm \label{Definition 5.1}
	 Let  $S$ be a nonempty subset of $N_{2}$. Then  $A=(a_{ij})\in \mathbb C^{n\times n}$$(n\geq 2)$ is said to be an $S$-$SDD_{1}$ matrix, if for all $i\in N$,
	 \begin{align*}
	 	|a_{ii}|-R^{\overline S}_{i}(A)-Q^{S}_{i}(A)>0.
	 \end{align*}
\end{definition}

\begin{theorem}\rm  \label{Theorem 5.2}
	Let   $A=(a_{ij})\in \mathbb C^{n\times n}$$(n\geq 2)$ be an $S$-$SDD_{1}$ matrix, then\\
	(i) $A$ is an $H$-matrix.\\
	(ii) $A/S$ is an SDD matrix.\\
	(iii) If   $2\leq |S|\leq |N_{2}|$, we have
	\begin{align*}
		\left\|A^{-1}\right\|_{\infty} \leq  \left(1+\underset{i\in S}{\rm max}\frac{R^{\overline S}_{i}(A)+Q^{S}_{i}(A)}{|a_{ii}|}\right){\rm max}\{\phi_{S},\psi_{S}\},
	\end{align*}
	where
	\begin{align*}
		\phi_{S}= \underset{\underset{i\neq j}{i,j\in S}}{\rm max}\frac{|a_{jj}|+R^{S}_{i}(A)}{|a_{ii}||a_{jj}|-R^{S}_{i}(A)R^{S}_{j}(A)},
	\end{align*}	
	and
	\begin{align*}
		\psi_{S}= \underset{i\in\overline S}{\rm max}\frac{1+\phi_{S}R^{S}_{i}(A)}{|a_{ii}|-R^{\overline S}_{i}(A)-\underset{j\in S}\sum \frac{|a_{ij}|}{|a_{jj}|}\left(R^{\overline S}_{j}(A)+Q^{S}_{j}(A)\right)}.
	\end{align*}
\end{theorem}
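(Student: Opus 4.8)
The plan is to handle the three claims in turn, in each case reducing to machinery already available in Sections \ref{section.2}--\ref{section.4}. For (i) the point is that the $S$-$SDD_{1}$ condition is strictly stronger than the $SDD_{1}$ condition. Splitting $N_{2}\setminus\{i\}$ into $S\setminus\{i\}$ and $N_{2}\setminus S$ and using $R_{j}(A)/|a_{jj}|<1$ for $j\in N_{2}$, one gets for every $i\in N$
\[
R^{\overline S}_{i}(A)+Q^{S}_{i}(A)-P_{i}(A)=\sum_{j\in(N_{2}\setminus S)\setminus\{i\}}|a_{ij}|\Bigl(1-\tfrac{R_{j}(A)}{|a_{jj}|}\Bigr)\ge 0,
\]
so $|a_{ii}|>R^{\overline S}_{i}(A)+Q^{S}_{i}(A)\ge P_{i}(A)$ for all $i$. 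Hence $A$ is an $SDD_{1}$ matrix and, by the inclusion $\{SDD_{1}\}\subsetneq\{H\}$, an $H$-matrix.

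For (ii), since $S\subseteq N_{2}$ the submatrix $A(S)$ is SDD, hence nonsingular and an $H$-matrix, so $A/S=(a'_{tu})$ is well defined and (\ref{eq3.1})--(\ref{eq3.2}) apply with $\alpha=S$. Writing $S=\{i_{1},\dots,i_{k}\}$ and $\overline S=\{j_{1},\dots,j_{l}\}$, for $j_{t}\in\overline S$ I would run the estimate of Lemma \ref{Lemma 3.2}(i),
\[
|a'_{tt}|-R_{t}(A/S)\ge |a_{j_{t}j_{t}}|-R^{\overline S}_{j_{t}}(A)-\sum_{j_{u}\in\overline S}\Delta_{j_{t}j_{u}},
\]
and apply the prior-construction Lemma \ref{Lemma 3.1} with $x_{t}=R^{\overline S}_{i_{t}}(A)$ (which trivially satisfies $0\le x_{t}\le R^{\overline S}_{i_{t}}(A)$) to get
\[
\sum_{j_{u}\in\overline S}\Delta_{j_{t}j_{u}}=|a_{j_{t}S}|<A(S)>^{-1}\bigl(R^{\overline S}_{i_{1}}(A),\dots,R^{\overline S}_{i_{k}}(A)\bigr)^{T}\le\sum_{h\in S}\frac{|a_{j_{t}h}|}{|a_{hh}|}\bigl(R^{\overline S}_{h}(A)+Q^{S}_{h}(A)\bigr).
\]
Because $R^{\overline S}_{h}(A)+Q^{S}_{h}(A)\le R_{h}(A)$ for $h\in S$ (again $R_{j}/|a_{jj}|\le 1$ on $S\subseteq N_{2}$, and $\overline S\cup(S\setminus\{h\})=N\setminus\{h\}$), this last sum is $\le Q^{S}_{j_{t}}(A)$, so with the $S$-$SDD_{1}$ hypothesis at row $j_{t}$,
\[
|a'_{tt}|-R_{t}(A/S)\ge |a_{j_{t}j_{t}}|-R^{\overline S}_{j_{t}}(A)-\sum_{h\in S}\frac{|a_{j_{t}h}|}{|a_{hh}|}\bigl(R^{\overline S}_{h}(A)+Q^{S}_{h}(A)\bigr)\ge |a_{j_{t}j_{t}}|-R^{\overline S}_{j_{t}}(A)-Q^{S}_{j_{t}}(A)>0;
\]
thus $A/S$ is SDD, and the middle quantity is precisely the denominator of $\psi_{S}$.

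For (iii) I would reproduce, with $\alpha=S$, the Schur-complement splitting $A^{-1}=PY\Lambda^{-1}XP^{T}$ and inequality (\ref{ZJFS}) from the proof of Theorem \ref{Theorem 6}; the hypothesis $|S|\ge 2$ makes $\phi_{S}$ well defined, and $\overline S\supseteq N_{1}\ne\emptyset$ (standing assumption) makes the off-diagonal block genuine. Both $A(S)$ and $A/S$ are SDD, hence $H$-matrices, so Lemma \ref{Lemma 5.1} gives $\|A(S)^{-1}\|_{\infty}\le\phi_{S}$ (and, applied to the SDD matrix $<A(S)>$, also $\|<A(S)>^{-1}\|_{\infty}\le\phi_{S}$); passing to comparison matrices via Lemma \ref{Lemma 2.1}, then using Lemma \ref{Lemma 5.2} and the lower bound from (ii),
\[
\Bigl\|(A/S)^{-1}\bigl(-A(\overline S,S)A(S)^{-1}\,I_{n-|S|}\bigr)\Bigr\|_{\infty}\le\max_{j_{t}\in\overline S}\frac{1+\phi_{S}R^{S}_{j_{t}}(A)}{|a'_{tt}|-R_{t}(A/S)}\le\psi_{S},
\]
while $\|Y\|_{\infty}=1+\|A(S)^{-1}A(S,\overline S)\|_{\infty}\le 1+\max_{i\in S}\frac{R^{\overline S}_{i}(A)+Q^{S}_{i}(A)}{|a_{ii}|}$ by Lemma \ref{Lemma 2.1} and Lemma \ref{Lemma 3.1}. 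Plugging these into $\|A^{-1}\|_{\infty}\le\|Y\|_{\infty}\max\{\|A(S)^{-1}\|_{\infty},\,\cdots\}$ gives the stated bound (and one checks that $S=N_{2}$ recovers Theorem \ref{Theorem 6}).

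The genuinely routine parts are the index-set bookkeeping — which subsets the sums range over, and the $\setminus\{i\}$ corrections. The main obstacle, and the only real content, is concentrated in (ii): it is the prior-construction Lemma \ref{Lemma 3.1} that turns $<A(S)>^{-1}$ into an explicit entrywise bound, and the one point that needs care is that $<A(S)>$ itself (not just $A(S)$) is SDD, so that Lemma \ref{Lemma 5.1} can be invoked on it to produce the factor $\phi_{S}$ in the numerator of $\psi_{S}$ and in the estimate of $\|Y\|_{\infty}$.
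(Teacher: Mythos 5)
Your proposal is correct and follows essentially the same route as the paper: part (i) via the comparison $P_{i}(A)\le R^{\overline S}_{i}(A)+Q^{S}_{i}(A)$, part (ii) via Lemma \ref{Lemma 3.1} applied with $\alpha=S$ and $x_{t}=R^{\overline S}_{i_{t}}(A)$, and part (iii) by repeating the Schur-complement splitting and norm estimates of Theorem \ref{Theorem 6} with $N_{2}$ replaced by $S$. The only difference is that the paper dismisses (iii) with ``similar to the proof of Theorem \ref{Theorem 6}'', whereas you spell out the details (including the point that $<A(S)>$ is SDD so Lemma \ref{Lemma 5.1} yields the factor $\phi_{S}$), all of which is consistent with the paper's argument.
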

\begin{proof}
	(i) Note that $\emptyset \neq S\subseteq N_{2} $. For all $i\in N$, we have
	\begin{align*}
		|a_{ii}|-P_{i}(A)\geq 	|a_{ii}|-R^{\overline S}_{i}(A)-Q^{S}_{i}(A)>0.
	\end{align*}
Then $A$ is an	$SDD_{1}$ matrix, therefore $A$ is an $H$-matrix.\\
(ii)    Note that
\begin{align*}
	|a^{\prime}_{tt}|-R_{t}(A/S)&\geq |a_{j_{t}j_{t}}|-R^{\overline S}_{j_{t}}(A)-\underset{j_{u}\in \overline S}\sum\Delta_{j_{t}j_{u}}.
\end{align*}
	Owing to Lemma \ref{Lemma 3.1}, we have
	\begin{align*}
		\underset{j_{u}\in \overline S}\sum\Delta_{j_{t}j_{u}}&=|a_{j_{t}S}|<A(S)>^{-1}\left(R^{\overline S}_{i_{1}}(A),\dots, R^{\overline S}_{i_{k}}(A)\right)^{T}
		\leq  \underset{h\in S}\sum \frac{|a_{j_{t}h}|}{|a_{hh}|}\left( R^{\overline S}_{h}(A)+Q^{S}_{h}(A)           \right).
	\end{align*}
	Therefore, for any $j_{t}\in \overline S$ we have
	\begin{align*}
		|a^{\prime}_{tt}|-R_{t}(A/S)\geq& |a_{j_{t}j_{t}}|-R^{\overline S}_{j_{t}}(A)- \underset{h\in S}\sum \frac{|a_{j_{t}h}|}{|a_{hh}|}\left( R^{\overline S}_{h}(A)+Q^{S}_{h}(A)           \right)\\
		\geq& 	|a_{j_{t}j_{t}}|-R^{\overline S}_{j_{t}}(A)-Q^{S}_{j_{t}}(A)>0.
	\end{align*}
	Then $A/S$ is an SDD matrix. The properties given by (ii), similar to the proof of Theorem  \ref{Theorem 6}, we can prove (iii). The proof is completed.
\end{proof}
\begin{remark}\rm  
	Recalling the definition of (\ref{eq 2.1}), we observe that $SDD_{1}$ matrices compensate all non-diagonally dominant rows by utilizing all diagonally dominant rows to achieve weakened diagonal dominance. In contrast,   $S$-$SDD_{1}$ matrices compensate non-diagonally dominant rows using only a subset of diagonally dominant rows.
	From this perspective, the $S$-$SDD_{1}$	matrices are closer to SDD matrices than $SDD_{1}$.
	\end{remark}

\section{The error bound for  linear complementarity problems with a  $B_{1}$-matrix }\label{section.5}
In this section, we derive a  bound for linear complementarity problems of $B_{1}$-matrices.   Next, let us review the specific definition of the $B_{1}$-matrix:
\begin{definition}\rm \cite{Pena} Let  $M=(m_{ij})\in{\mathbb R^{n\times n}}$ be written in the form 
	\begin{align}
		M=A+C,  \label{eq 6.2}
	\end{align}
	where
	\begin{align*}
		A=(a_{ij})=\left(
		\begin{array}{ccc}
			m_{11}-r_{1}& \dots &m_{1n}-r_{1} \\
			\vdots      &       &\vdots \\
			m_{n1}-r_{n}&\dots  & m_{nn}-r_{n} 
		\end{array}
		\right)  
		,\quad  C=\left(
		\begin{array}{ccc}
			r_{1}&\dots &r_{1}\\
			\vdots&      &\vdots \\
			r_{n}&\dots       &r_{n}    
		\end{array}
		\right),
	\end{align*}
	and
	\begin{align*}
		r_{i}:={\rm max}\{0, {\rm \underset{i\neq j}{max}}\thinspace m_{ij}\}. 
	\end{align*}
	We call $M$ a $B_{1}$-matrix if $A$ is an  $SDD_{1}$ matrix, and  all its diagonal entries are positive.
\end{definition}

\begin{lem}\rm \cite{Pena}
	If $M$ is a $B_{1}$-matrix then $M$ is a $P$-matrix.
\end{lem}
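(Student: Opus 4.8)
The plan is to show that every principal minor of $M$ is positive, which by the definition of a $P$-matrix recalled in the Introduction is exactly what is required. The key preliminary observation is that the matrix $A$ in the decomposition $M=A+C$ of \eqref{eq 6.2} is in fact a nonsingular $M$-matrix, not merely an $SDD_{1}$ matrix. Indeed, for $i\neq j$ we have $a_{ij}=m_{ij}-r_{i}\leq 0$, since $r_{i}\geq\underset{k\neq i}{\max}\,m_{ik}\geq m_{ij}$, so $A$ is a $Z$-matrix; together with the hypothesis that every diagonal entry $a_{ii}=m_{ii}-r_{i}$ is positive, this forces $\langle A\rangle=A$. Since $A$ is an $SDD_{1}$ matrix it is an $H$-matrix, i.e.\ $\langle A\rangle$ is an $M$-matrix, and therefore $A=\langle A\rangle$ is a nonsingular $M$-matrix.

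First I would invoke the standard structure theory of nonsingular $M$-matrices (see \cite{Berman A}): for every nonempty $\alpha\subseteq N$, the principal submatrix $A(\alpha)$ is again a nonsingular $M$-matrix, so $A(\alpha)^{-1}\geq 0$ and $\det\!\big(A(\alpha)\big)>0$. Next, fix such an $\alpha$ and write the corresponding principal submatrix of $M$ as $M(\alpha)=A(\alpha)+C(\alpha)$. Because every row of $C$ is constant, $C(\alpha)=r_{\alpha}e^{T}$ is a rank-one (possibly zero) matrix, where $r_{\alpha}=(r_{i})_{i\in\alpha}^{T}\geq 0$ and $e$ is the all-ones vector of length $|\alpha|$. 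Applying the matrix determinant lemma $\det(B+uv^{T})=\det(B)\big(1+v^{T}B^{-1}u\big)$ to this rank-one update yields
\[
\det\!\big(M(\alpha)\big)=\det\!\big(A(\alpha)\big)\Big(1+e^{T}A(\alpha)^{-1}r_{\alpha}\Big).
\]
Since $\det\!\big(A(\alpha)\big)>0$, $A(\alpha)^{-1}\geq 0$ and $r_{\alpha}\geq 0$, the scalar $e^{T}A(\alpha)^{-1}r_{\alpha}$ is nonnegative, hence $\det\!\big(M(\alpha)\big)>0$. As $\alpha$ was arbitrary, all principal minors of $M$ are positive, so $M$ is a $P$-matrix.

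The only subtle step is the first one: upgrading ``$A$ is an $H$-matrix'' to ``$A$ is a nonsingular $M$-matrix'' by exploiting the $Z$-matrix structure forced by the definition of $r_{i}$. This upgrade is precisely what guarantees $A(\alpha)^{-1}\geq 0$ for every $\alpha$, which in turn pins down the sign of the correction term $e^{T}A(\alpha)^{-1}r_{\alpha}$; without nonnegativity of the inverse, that term could be negative and the argument would break down (note that $\det(\text{$M$-matrix}+\text{nonnegative matrix})$ need not be positive in general — it is the rank-one form of $C$ that saves the day). Everything after that is a direct application of standard $M$-matrix facts and the rank-one determinant identity, so essentially no computation is needed.
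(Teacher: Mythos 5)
The paper does not prove this lemma at all: it is quoted directly from Pe\~{n}a's paper \cite{Pena}, so there is no in-paper argument to compare yours against. Your proof is correct and self-contained. The key upgrade you make — observing that the definition $r_{i}=\max\{0,\max_{j\neq i}m_{ij}\}$ forces $a_{ij}=m_{ij}-r_{i}\leq 0$ off the diagonal, so that the $SDD_{1}$ (hence $H$-matrix) property together with the positive diagonal of $A$ gives $\langle A\rangle=A$ and thus makes $A$ a nonsingular $M$-matrix — is exactly what is needed, and it is consistent with how the present paper reads the definition (in the proof of Theorem 6.2 the authors likewise take $A$ to be $SDD_{1}$ with positive diagonal entries). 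From there, the heredity of the nonsingular $M$-matrix property to principal submatrices, the nonnegativity of $A(\alpha)^{-1}$, and the rank-one determinant identity $\det\bigl(A(\alpha)+r_{\alpha}e^{T}\bigr)=\det\bigl(A(\alpha)\bigr)\bigl(1+e^{T}A(\alpha)^{-1}r_{\alpha}\bigr)$ give positivity of every principal minor, and your cautionary remark that a general nonnegative perturbation of an $M$-matrix need not preserve positivity of the determinant (so the constant-row, rank-one structure of $C$ is essential) is also accurate. This is the standard style of argument used for $B$-matrix-type classes in the literature, and it gives a clean, elementary replacement for the citation; the only thing it costs is reliance on the classical facts about principal submatrices and inverses of nonsingular $M$-matrices from \cite{Berman A}, which the paper already assumes.
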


Subsequently, we establish an upper bound for (\ref{0.1})  when $M$ is a $B_{1}$-matrix.  Through the substitution $M=A+C$ into $I-D+DM$, we obtain the decomposition:
\begin{align}
	I-D+DM=B+T, \label{eq 6.3} 
	\end{align}
where $B=I-D+DA$ and $T=DC$. The properties of $B$ are closely related to the final upper bound. We give an explanation of its properties by the following Theorem:

\begin{theorem}\rm \label{Theorem 6.1}
	Let $A=(a_{ij})\in \mathbb C^{n\times n}$ be an $SDD_{1}$ matrix with all positive diagonal entries. And let $B=I-D+DA=(b_{ij})$, where $D=diag\{d_{1},d_{2},\dots, d_{n}\}$ with $0\leq d_{i}\leq 1$. Then $B$ is an $SDD_{1}$ matrix with diagonal entries are all positive. Denote $N_{1}(B)$ as the non-diagonally dominant set of $B$ and $N_{2}(B)$ as the diagonally dominant set of $B$, the containment relations hold:
	\begin{align}
		N_{1}(B)\subseteq N_{1}, N_{2}\subseteq N_{2}(B). \label{eq 6.4}
	\end{align}
		Furthermore,  $B$ is an $N_{2}$-$SDD_{1}$ matrix.
\end{theorem}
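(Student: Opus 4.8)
The plan is to argue entrywise. Since $b_{ii}=1-d_i+d_ia_{ii}$ and $b_{ij}=d_ia_{ij}$ for $i\neq j$, we have $R_i(B)=d_iR_i(A)$ and $|b_{ij}|=d_i|a_{ij}|$. Positivity of the diagonal of $B$ follows at once from Lemma \ref{Lemma 2.6}(ii) with $\gamma=a_{ii}>0$ and $x=d_i$, giving $b_{ii}\geq\min\{a_{ii},1\}>0$. Two further facts will be used throughout: from Lemma \ref{Lemma 2.6}(i) with $\gamma=a_{jj}>0$, $\eta=R_j(A)$, $x=d_j$ we get
\[
\frac{R_j(B)}{b_{jj}}=\frac{d_jR_j(A)}{1-d_j+d_ja_{jj}}\leq\frac{R_j(A)}{a_{jj}}\qquad(j\in N),
\]
and since $1-d_i\geq0$ we have $b_{ii}\geq d_ia_{ii}$.

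Next I would establish the containment $N_{2}\subseteq N_{2}(B)$ (equivalently $N_{1}(B)\subseteq N_{1}$). Fix $i\in N_{2}$, so $a_{ii}>R_i(A)$. If $d_i=0$ then $b_{ii}=1>0=R_i(B)$; if $d_i>0$ then $b_{ii}\geq d_ia_{ii}>d_iR_i(A)=R_i(B)$. In either case $i\in N_{2}(B)$. Having this, set $L:=N_{1}\cap N_{2}(B)$, the rows that are non-dominant for $A$ but dominant for $B$; then $N_{1}$ is the disjoint union of $N_{1}(B)$ and $L$, while $N_{2}(B)$ is the disjoint union of $N_{2}$ and $L$.

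The core of the proof is the single chain of inequalities, valid for every $i\in N$,
\[
P_i(B)\ \leq\ R_i^{N_{1}}(B)+Q_i^{N_{2}}(B)\ \leq\ d_iP_i(A).
\]
For the first inequality, split the sum defining $P_i(B)$ according to $N_{1}(B)$, $L$, and $N_{2}$; on $L\subseteq N_{2}(B)$ each weight $R_j(B)/b_{jj}$ is less than $1$, so those terms are at most $|b_{ij}|$ and recombine with the $N_{1}(B)$-sum into the plain $N_{1}$-sum, leaving the $N_{2}$-sum untouched. For the second inequality, apply $R_j(B)/b_{jj}\leq R_j(A)/a_{jj}$ on the $N_{2}$-terms and factor $d_i$ out of $|b_{ij}|=d_i|a_{ij}|$, reaching $d_i\bigl(R_i^{N_{1}}(A)+Q_i^{N_{2}}(A)\bigr)=d_iP_i(A)$. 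I expect this bookkeeping to be the only real obstacle: one must keep in mind that $P_i(B)$ is defined relative to $B$'s own index sets $N_{1}(B)$, $N_{2}(B)$, not those of $A$, and check that moving an index from the weight-$1$ block into the weight-$(R_j(B)/b_{jj})$ block never increases its contribution.

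Finally I would conclude all three assertions from this estimate. If $d_i=0$ then $b_{ij}=0$ for $j\neq i$, so $R_i^{N_{1}}(B)+Q_i^{N_{2}}(B)=0<1=b_{ii}$; if $d_i>0$ then
\[
b_{ii}-\bigl(R_i^{N_{1}}(B)+Q_i^{N_{2}}(B)\bigr)\ \geq\ d_ia_{ii}-d_iP_i(A)\ =\ d_i\bigl(a_{ii}-P_i(A)\bigr)\ >\ 0
\]
because $A$ is an $SDD_{1}$ matrix. Since $\overline{N_{2}}=N_{1}$, this is exactly the defining inequality of an $N_{2}$-$SDD_{1}$ matrix; together with $N_{2}\neq\emptyset$ and $N_{2}\subseteq N_{2}(B)$, Definition \ref{Definition 5.1} applies to $B$ with $S=N_{2}$, so $B$ is $N_{2}$-$SDD_{1}$. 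Then $B$ is $SDD_{1}$ either by Theorem \ref{Theorem 5.2}(i) or directly, using $P_i(B)\leq R_i^{N_{1}}(B)+Q_i^{N_{2}}(B)$ together with $b_{ii}>0$; combined with the positivity of the diagonal and the containments \eqref{eq 6.4} already proved, this completes the statement.
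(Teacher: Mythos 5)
Your proposal is correct and follows essentially the same route as the paper: establish $b_{ii}>0$ and $N_2\subseteq N_2(B)$ via Lemma \ref{Lemma 2.6}, bound $P_i(B)\leq R_i^{N_1}(B)+Q_i^{N_2}(B)$ by passing from $B$'s index sets to $A$'s (your explicit bookkeeping with $L$ is exactly the step the paper leaves implicit), and then use Lemma \ref{Lemma 2.6}(i) to compare with $P_i(A)$ before invoking Definition \ref{Definition 5.1}. The only cosmetic difference is that you drop the $1-d_i$ term and treat $d_i=0$ separately, whereas the paper keeps the convex-combination form and concludes with the slightly stronger bound $b_{ii}-P_i(B)\geq\min\{1,\,a_{ii}-P_i(A)\}$.
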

\begin{proof}
	For any $i\in N$, due to (ii) of Lemma \ref{Lemma 2.6}, we obtain  that
	\begin{align*}
		b_{ii}=1-d_{i}+d_{i}a_{ii}\geq {\rm min} \{1,a_{ii}\}>0.
	\end{align*}
More carefully, 	for any $i\in N_{2}$, it follows that
	\begin{align*}
		b_{ii}-R_{i}(B)=1-d_{i}+d_{i}a_{ii}-d_{i}R_{i}(A)\geq {\rm min}\{1, a_{ii}-R_{i}(A)\}>0.
	\end{align*}
	This means  $N_{2}\subseteq N_{2}(B)$. Since  $\overline {N_{2}(B)}=N_{1}(B)$, we consequently have $N_{1}(B)\subseteq N_{1}$.
	For any $i\in N$, we have 
	\begin{align*}
		b_{ii}-P_{i}(B)=&b_{ii}-R^{N_{1}(B)}_{i}(B)-Q^{N_{2}(B)}_{i}(B)\\
		\geq & b_{ii}-R^{N_{1}}_{i}(B)-Q^{N_{2}}_{i}(B)\\
	    =& 1-d_{i}+d_{i}\left(a_{ii}-R^{N_{1}}_{i}(A)-\underset{j\in N_{2}\backslash \{i\}}\sum \frac{d_{j}|a_{ij}|R_{j}(A)}{1-d_{j}+d_{j}a_{jj}}\right)\\
	    \geq &1-d_{i}+d_{i}\left(a_{ii}-R^{N_{1}}_{i}(A)-Q^{N_{2}}_{i}(A)        \right) (\text{By the (i) of Lemma \ref{Lemma 2.6}})\\
	    \geq &{\rm min} \{1, a_{ii}-P_{i}(A)       \}>0.
	\end{align*}
In summary,  $B$ is an $SDD_{1}$  matrix with all diagonal entries  positive.  Furthermore, by definition \ref{Definition 5.1},  $B$ constitutes an  $N_{2}$-$SDD_{1}$ matrix.
\end{proof}
The following theorem presents a  bound for $	\underset{\rm d\in[0,1]^{n}}{\rm{max}}\left\|(I-D+DM)^{-1}\right\|_{\infty}$  with a $B_{1}$-matrix $M$.

\begin{theorem}\rm \label{Theorem 6.2} Let $M=(m_{ij})\in \mathbb R^{n\times n}$ be a $B_{1}$-matrix with the from $M=A+C$, where $A$ and $C$ are given as in $(\ref{eq 6.2})$ and $|N_{2}|\geq 2$. Then
	\begin{align}
			\underset{\rm d\in[0,1]^{n}}{\rm{max}}\left\|(I-D+DM)^{-1}\right\|_{\infty}\leq (n-1)(1+\underset{i\in N_{2}}{\rm max}\frac{P_{i}(A)}{a_{ii}}){\rm max}\{\widetilde \phi, \widetilde \psi     \},\label{Formula 9.1}
	\end{align}
	where
	\begin{align*}
		\widetilde \phi= \underset{\underset{i\neq j}{i,j\in N_{2}}}{\rm max}\frac{{\rm max} \{1,a_{jj}\}+R^{N_{2}}_{i}(A)      }{{\rm min} \{1,a_{ii},a_{jj},a_{ii}a_{jj}-R^{N_{2}}_{i}(A)R^{N_{2}}_{j}(A)\}         },
	\end{align*}
	and
	\begin{align*}
		\widetilde \psi =\underset{i\in N_{1}     }{ \rm max   }\frac{1+\widetilde \phi R^{N_{2}}_{i}(A)}{{\rm min}\{1,a_{ii}-R^{N_{1}}_{i}(A)-\underset{j\in N_{2}}\sum\frac{|a_{ij}|}{a_{jj}}P_{j}(A)   \} }.
	\end{align*}
	 If $B$ is also a $Z$-matrix, The coefficient $n-1$ can be removed, i.e., \vspace{-1ex}
		\begin{align}
		\underset{\rm d\in[0,1]^{n}}{\rm{max}}\left\|(I-D+DM)^{-1}\right\|_{\infty}\leq (1+\underset{i\in N_{2}}{\rm max}\frac{P_{i}(A)}{a_{ii}}){\rm max}\{\widetilde \phi, \widetilde \psi     \}.\label{Formula 9.2}
	\end{align}
\end{theorem}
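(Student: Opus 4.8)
The plan is to reduce the uniform estimate of $\|(I-D+DM)^{-1}\|_\infty$ to an estimate of $\|B^{-1}\|_\infty$ via the splitting $(\ref{eq 6.3})$, then to bound $\|B^{-1}\|_\infty$ by Theorem \ref{Theorem 5.2}(iii) applied to the $N_2$-$SDD_1$ matrix $B$ produced by Theorem \ref{Theorem 6.1}, and finally to strip the dependence on $D$ using the monotonicity inequalities of Lemma \ref{Lemma 2.6}.

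First I would fix $D=\mathrm{diag}\{d_1,\dots,d_n\}$ with $d_i\in[0,1]$ and write $I-D+DM=B+T$, $B=I-D+DA$, $T=DC$. By Theorem \ref{Theorem 6.1}, $B$ is an $SDD_1$ matrix with positive diagonal and, because $N_2\subseteq N_2(B)$, an $N_2$-$SDD_1$ matrix; since $|N_2|\ge 2$, Theorem \ref{Theorem 5.2}(iii) applies to $B$ with $S=N_2$. The crucial structural observation is that $C=r e^{T}$ with $r=(r_1,\dots,r_n)^{T}\ge 0$ and $e=(1,\dots,1)^{T}$, hence $T=(Dr)e^{T}=:\tilde c\,e^{T}$ is rank one with $\tilde c\ge 0$. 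Then $B^{-1}T=(B^{-1}\tilde c)e^{T}=:w\,e^{T}$, and since $I-D+DM$ is nonsingular ($M$, and hence $I-D+DM$, being a $P$-matrix) we have $1+e^{T}w\neq 0$ and, by Sherman--Morrison,
\begin{align*}
	(I-D+DM)^{-1}=\left(I-\frac{1}{1+e^{T}w}\,w\,e^{T}\right)B^{-1}.
\end{align*}
Expanding the $i$-th row of the right side as $\left(1-\frac{w_i}{1+e^{T}w}\right)(B^{-1})_{i\cdot}-\frac{w_i}{1+e^{T}w}\sum_{k\neq i}(B^{-1})_{k\cdot}$ and passing to $\ell_1$-norms, the $n-1$ rows $(B^{-1})_{k\cdot}$ with $k\neq i$ contribute at most $\|B^{-1}\|_\infty$ each; after controlling $\left|1-\frac{w_i}{1+e^{T}w}\right|$ and $\left|\frac{w_i}{1+e^{T}w}\right|$ this gives $\|(I-D+DM)^{-1}\|_\infty\le(n-1)\|B^{-1}\|_\infty$. (Alternatively, the classical maximum-principle argument for $B$-matrices applied to $(B+T)x=y$ yields the same bound, the factor $n-1$ arising from the $n-1$ off-diagonal entries in the dominant row.) If $B$ is moreover a $Z$-matrix then, being $SDD_1$ with positive diagonal, it is a nonsingular $M$-matrix, so $B^{-1}\ge 0$; hence $w=B^{-1}\tilde c\ge 0$, $1+e^{T}w\ge 1$, and the column sums $e^{T}B^{-1}$ are nonnegative, and a sharper sign analysis of the rank-one correction collapses the factor $n-1$ to $1$.

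Finally I would insert into the bound of Theorem \ref{Theorem 5.2}(iii) the identities $b_{ii}=1-d_i+d_i a_{ii}$, $|b_{ij}|=d_i|a_{ij}|$ ($i\neq j$), $R^{S}_i(B)=d_iR^{S}_i(A)$, and $Q^{N_2}_i(B)=\sum_{j\in N_2\setminus\{i\}}d_i|a_{ij}|\frac{d_jR_j(A)}{1-d_j+d_ja_{jj}}\le d_iQ^{N_2}_i(A)$ (Lemma \ref{Lemma 2.6}(i)), and then use Lemma \ref{Lemma 2.6}(i)--(ii) to replace every remaining $D$-dependent quantity by its worst case over $[0,1]^{n}$: numerators pushed up, denominators pushed down, each denominator staying positive because $A$ is $SDD_1$ (cf. Theorem \ref{Theorem 3.2}). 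This makes the prefactor at most $1+\max_{i\in N_2}\frac{P_i(A)}{a_{ii}}$, the quantity $\phi_{N_2}(B)$ at most $\widetilde\phi$ --- the denominator estimate $(1-d_i+d_ia_{ii})(1-d_j+d_ja_{jj})-d_id_jR^{N_2}_i(A)R^{N_2}_j(A)\ge\min\{1,\,a_{ii},\,a_{jj},\,a_{ii}a_{jj}-R^{N_2}_i(A)R^{N_2}_j(A)\}$ following from the bilinearity of the left side in $(d_i,d_j)$ --- and $\psi_{N_2}(B)$ at most $\widetilde\psi$. Combining with the previous step and taking the supremum over $d\in[0,1]^{n}$ yields $(\ref{Formula 9.1})$, and together with the $Z$-matrix refinement yields $(\ref{Formula 9.2})$. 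The main obstacle is the constant in the second step: establishing exactly $n-1$ (resp. $1$ under the $Z$-matrix hypothesis) requires precise control of the Sherman--Morrison factor $w/(1+e^{T}w)$, or, in the direct approach, a careful case split on the sign of $e^{T}x$; the substitutions in the last step are routine, the bilinear estimate for the denominator of $\widetilde\phi$ being the only mildly subtle point.
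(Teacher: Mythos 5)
Your overall architecture is the paper's: split $I-D+DM=B+T$ with $B=I-D+DA$, $T=DC$, invoke Theorem \ref{Theorem 6.1} to see $B$ is an $N_{2}$-$SDD_{1}$ matrix, bound $\|B^{-1}\|_{\infty}$ by Theorem \ref{Theorem 5.2}(iii), and then eliminate $D$ with Lemma \ref{Lemma 2.6}; your vertex argument for the bilinear form $\Gamma_{ij}=(1-d_i+d_ia_{ii})(1-d_j+d_ja_{jj})-d_id_jR^{N_2}_i(A)R^{N_2}_j(A)$ is a clean substitute for the paper's two-case estimate, and your Sherman--Morrison computation is essentially a re-proof of the inequality $\|(I+B^{-1}T)^{-1}\|_{\infty}\le n-1$ that the paper imports from \cite{Gar}. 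One point you must state already for (\ref{Formula 9.1}): that computation needs $w=B^{-1}Dr\ge 0$, i.e.\ $B^{-1}\ge 0$, since otherwise $1+e^{T}w$ and the factors $w_i/(1+e^{T}w)$ are uncontrolled. This is available because, by the construction (\ref{eq 6.2}), $a_{ij}=m_{ij}-r_i\le 0$ for $j\ne i$, so $A$ and hence $B=I-D+DA$ are always $Z$-matrices (indeed $M$-matrices, being $SDD_{1}$ with positive diagonal); you only invoke this under the extra hypothesis of the second claim, so move it up front.

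The genuine gap is your treatment of (\ref{Formula 9.2}). Precisely because $B$ is \emph{always} a $Z$-matrix here, your reading of the hypothesis is vacuous, and the claim that "a sharper sign analysis of the rank-one correction collapses the factor $n-1$ to $1$" is false: with $w\ge 0$ the exact absolute row sums of $(I+we^{T})^{-1}$ are $1+(n-2)\,w_i/(1+e^{T}w)$, which exceed $1$ whenever $w_i>0$ and $n\ge 3$. Concretely, taking $B$ close to the identity and $\tilde c=Dr>0$ gives $\|(B+T)^{-1}\|_{\infty}=1+(n-2)\max_i \tilde c_i/(1+e^{T}\tilde c)>\|B^{-1}\|_{\infty}$, so no sign analysis of the nonnegative rank-one correction can remove the factor. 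The paper's proof instead interprets the hypothesis as $M$ itself being a $Z$-matrix, i.e.\ all $r_i=0$, so that $C$ and $T=DC$ are null, $I-D+DM=B$, and (\ref{Formula 9.2}) follows directly from the bound on $\|B^{-1}\|_{\infty}$; you need that (trivial) reduction rather than a refined estimate of the rank-one perturbation.
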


\begin{proof}
Since  $B$ is a $B_{1}$-matrix, we have $A$ is an $SDD_{1}$ matrix with all positive diagonal entries. Owing to decomposition (\ref{eq 6.3}), we obtain that $I-D+DM=B+T$, where $B=I-D+DA$, $T=DC$.  Similarly to the proof of Theorem 2 in \cite{Gar} , we have  
 \begin{align*}
	\left\|(I-D+DM)^{-1}\right\|_{\infty}= \left\|(I+B^{-1}T)^{-1}B^{-1}\right\|_{\infty}\leq \left\|(I+B^{-1}T)^{-1}\right\|_{\infty}\left\|B^{-1}\right\|_{\infty}\leq (n-1) \left\|B^{-1}\right\|_{\infty}.
\end{align*}
Owing to Theorem \ref{Theorem 6.1}, we have $B$ is an $N_{2}$-$SDD_{1}$ matrix with all diagonal entries are positive. Using Theorem \ref{Theorem 5.2}, we obtain that 
	\begin{align}
	\left\|B^{-1}\right\|_{\infty} \leq  \left(1+\underset{i\in N_{2}}{\rm max}\frac{\hat{P}_{i}(B)}{|b_{ii}|}\right){\rm max}\{\hat{\phi},\hat{\psi}\},\label{eq 6.5}
\end{align}
where 
\begin{align*}
	\hat{\phi}= \underset{\underset{i\neq j}{i,j\in N_{2}}}{\rm max}\frac{|b_{jj}|+R^{N_{2}}_{i}(B)}{|b_{ii}||b_{jj}|-R^{N_{2}}_{i}(B)R^{N_{2}}_{j}(B)},
\end{align*}
and 
\begin{align*}
	\hat{\psi}= \underset{i\in N_{1}}{\rm max}\frac{1+\hat\phi R^{N_{2}}_{i}(B)}{|b_{ii}|-R^{N_{1}}_{i}(B)-\underset{j\in N_{2}}\sum \frac{|b_{ij}|}{|b_{jj}|}\hat{P}_{j}(B)},
\end{align*}
and 
\begin{align*}
	\hat{P}_{i}(B)=&R^{N_{1}}_{i}(B)+Q^{N_{2}}_{i}(B)\\
	=&d_{i}\left( R^{N_{1}}_{i}(A)+\underset{j\in N_{2}\backslash \{i\}}\sum \frac{d_{j}|a_{ij}|R_{j}(A)        }{1-d_{j}+d_{j}a_{jj}}        \right)\\
	\leq&d_{i} \left( R^{N_{1}}_{i}(A)+Q^{N_{2}}_{i}(A) \right) \\
	=&d_{i}P_{i}(A).
\end{align*}
 Consequently, it follows that 
 \begin{align*}
 		1+\underset{i\in N_{2}}{\rm max}\frac{\hat{P}_{i}(B)}{|b_{ii}|}\leq 	1+\underset{i\in N_{2}}{\rm max}\frac{d_{i}{P}_{i}(A)}{1-d_{i}+d_{i}a_{ii}}\leq 	1+\underset{i\in N_{2}}{\rm max}\frac{P_{i}(A)}{a_{ii}}.
 \end{align*}
 We now derive an estimate for  $\hat \phi$ using the elements of $A$. Denote 
\begin{align}
	\Gamma_{ij}=b_{ii}b_{jj}-R^{N_{2}}_{i}(B)R^{N_{2}}_{j}(B).\label{eq 6.6}
\end{align}
Then  \vspace{-2ex}
\begin{align*}
	\frac{\Gamma_{ij}}{b_{jj}}=&b_{ii}-R^{N_{2}}_{i}(B)\frac{R^{N_{2}}_{j}(B)}{b_{jj}}\\
	=&1-d_{i}+d_{i}\left(a_{ii}-R^{N_{2}}_{i}(A)\frac{R^{N_{2}}_{j}(B)}{b_{jj}}\right)\\
	\geq & {\rm min}\{1, a_{ii}-R^{N_{2}}_{i}(A)\frac{R^{N_{2}}_{j}(B)}{b_{jj}}        \}.
\end{align*}
If $a_{ii}-R^{N_{2}}_{i}(A)\frac{R^{N_{2}}_{j}(B)}{b_{jj}} <1$, we have 
\begin{align*}
	\frac{\Gamma_{ij}}{b_{jj}} \geq  a_{ii}-R^{N_{2}}_{i}(A)\frac{R^{N_{2}}_{j}(B)}{b_{jj}}.
\end{align*}
The above formula is equivalent to
\begin{align*}
	\Gamma_{ij} \geq  a_{ii}b_{jj}-R^{N_{2}}_{i}(A)R^{N_{2}}_{j}(B).
\end{align*}
Then \vspace{-1ex}
\begin{align*}
		\frac{\Gamma_{ij}}{a_{ii}} \geq&  b_{jj}-\frac{R^{N_{2}}_{i}(A)}{a_{ii}}R^{N_{2}}_{j}(B)\\
		=& 1-d_{j}+d_{j}a_{jj}-d_{j}\frac{R^{N_{2}}_{i}(A)}{a_{ii}}R^{N_{2}}_{j}(A)\\
		\geq& {\rm min} \{1, a_{jj}-\frac{R^{N_{2}}_{i}(A)}{a_{ii}}R^{N_{2}}_{j}(A)\}.
\end{align*}
Therefore  
\begin{align}
	\Gamma_{ij}\geq {\rm min}\{a_{ii},a_{ii}a_{jj}-R^{N_{2}}_{i}(A)R^{N_{2}}_{j}(A)\}.\label{eq 6.7}
\end{align}
If $a_{ii}-R^{N_{2}}_{i}(A)\frac{R^{N_{2}}_{j}(B)}{b_{jj}} \geq 1$, we have
\begin{align}
	\Gamma_{ij}\geq b_{jj}=1-d_{j}+d_{j}a_{jj}\geq {\rm min}\{1,a_{jj}\}.\label{eq 6.8}
\end{align}
Due to (\ref{eq 6.7}) and (\ref{eq 6.8}), it follows that
\begin{align}
	\Gamma_{ij}\geq {\rm min}\{1,a_{ii},a_{jj},a_{ii}a_{jj}-R^{N_{2}}_{i}(A)R^{N_{2}}_{j}(A)\}. \label{eq 6.9}
\end{align}
By (ii) of Lemma \ref{Lemma 2.6},  we have
\begin{align}
	|b_{jj}|+R^{N_{2}}_{i}(B)=1-d_{j}+d_{j}a_{jj}+d_{i}R^{N_{2}}_{i}(A)\leq {\rm max}\{1,a_{jj}\}+R^{N_{2}}_{i}(A). \label{eq 6.10}
\end{align}
Owing to (\ref{eq 6.9}) and (\ref{eq 6.10}), it follows that
\begin{align}
	\hat\phi=\underset{\underset{i\neq j}{i,j\in N_{2}}}{\rm max}\frac{|b_{jj}|+R^{N_{2}}_{i}(B)}{\Gamma_{ij}}\leq \underset{\underset{i\neq j}{i,j\in N_{2}}}{\rm max}\frac{{\rm max} \{1,a_{jj}\}+R^{N_{2}}_{i}(A)      }{{\rm min} \{1,a_{ii},a_{jj},a_{ii}a_{jj}-R^{N_{2}}_{i}(A)R^{N_{2}}_{j}(A)\}         }=:\widetilde \phi.\label{eq 6.11}
\end{align}
For $\hat\psi$, we derive  \vspace{-1ex}
\begin{align*} 
	 &b_{ii}-R^{N_{1}}_{i}(B)-\underset{j\in N_{2}}\sum \frac{|b_{ij}|}{|b_{jj}|}\hat{P}_{j}(B)\\
	=& 1-d_{i}+d_{i}a_{ii}-d_{i}R^{N_{1}}_{i}(A)-d_{i}\underset{j\in N_{2}}\sum \frac{|a_{ij}|}{1-d_{j}+d_{j}a_{jj}}\hat{P}_{j}(B)\\
	\geq&1-d_{i}+d_{i}a_{ii}-d_{i}R^{N_{1}}_{i}(A)-d_{i}\underset{j\in N_{2}}\sum \frac{|a_{ij}|d_{j}}{1-d_{j}+d_{j}a_{jj}}P_{j}(A)\\
	\geq& 
	 1-d_{i}+d_{i}a_{ii}-d_{i}R^{N_{1}}_{i}(A)-d_{i}\underset{j\in N_{2}}\sum\frac{|a_{ij}|}{a_{jj}}{P}_{j}(A)\\
	\geq& {\rm min}\{1,a_{ii}-R^{N_{1}}_{i}(A)-\underset{j\in N_{2}}\sum\frac{|a_{ij}|}{a_{jj}}P_{j}(A)\}.  
\end{align*}
Therefore
\begin{align}
	\hat\psi=\underset{i\in N_{1}     }{ \rm max   }\frac{1+\hat\phi R^{N_{2}}_{i}(B)}{|b_{ii}|-R^{N_{1}}_{i}(B)-\underset{j\in N_{2}}\sum \frac{|b_{ij}|}{|b_{jj}|}\hat P_{j}(B) }      \leq \underset{i\in N_{1}     }{ \rm max   }\frac{1+\widetilde \phi R^{N_{2}}_{i}(A)}{{\rm min}\{1,a_{ii}-R^{N_{1}}_{i}(A)-\underset{j\in N_{2}}\sum\frac{|a_{ij}|}{a_{jj}}P_{j}(A)   \} }=:\widetilde \psi.\label{eq 6.12}
\end{align}
Put (\ref{eq 6.11}) and (\ref{eq 6.12}) into (\ref{eq 6.5}) to get (\ref{Formula 9.1}). Now,  we consider the special case that $B$ is also a $Z$-matrix, i.e., $M=A$ and $C$ is a null matrix. Hence, $T=DC$ is a null matrix. Furthermore,  we have 
 \begin{align*}
 		\left\|(I-D+DM)^{-1}\right\|_{\infty}\leq 	\left\|B^{-1}\right\|_{\infty}.
 \end{align*}
Consequently, (\ref{Formula 9.2}) holds. The proof is completed.
\end{proof}
\begin{remark}\rm
	When $|N_{2}|=1$, without loss of generality, we can assume that $N_{2}=\{i_{0}\}$. We only need to modify  $\widetilde\phi$ to be ${\rm max}\{1,|a_{i_{0}i_{0}}|^{-1}\}$, and Theorem \ref{Theorem 6.2} still holds.
\end{remark}

\begin{example}\rm Consider the $B_{1}$-matrix
		\begin{align*}
		M=\left(
			\renewcommand{\arraystretch}{0.9} 
		\setlength{\arraycolsep}{0.6em}  
		\begin{array}{@{\hspace{0.2em}} *{8}{c} @{\hspace{0.2em}} } 
7.5 & -2      & -2      & -2      & -0.9 & 0      & -0.5 & -0.1 \\
-12     & 20.6 & -7      & 0      & -0.9 & -0.6 & -0.1 & 0      \\
-3.5 & -2.2 & 7      & 0      & -1.4 & 0      & 0      &-0.1 \\
-12.4 & -35     & -7      & 56     & -1.6 & 0      & -0.1 & 0      \\
-0.12 & 0      & 0      & 0      & 1.2 & -0.16 & 0      & 0      \\
0      & 0      & -0.2     & -0.12      & 0 & 1.2 & 0      & 0 \\
0      & 0      & -0.16 & 0      & -0.1 & 0      & 1.2      & 0 \\
-0.1 & 0      & -0.12 & 0      & 0 & 0      & 0      & 1.2 
		\end{array}
		\right).	
	\end{align*}
	We have $N_{1}=\{1,2,3,4\}$, $N_{2}=\{5,6,7,8\}$. By Theorem \ref{Theorem 5.2}, we obtain 
	\begin{align*}
		\underset{\rm d\in[0,1]^{n}}{\rm{max}}\left\|(I-D+DM)^{-1}\right\|_{\infty}	\leq 4.1952.
	\end{align*}
	
	Since matrix $M$ is an $H$-matrix with positive main diagonal entries and satisfies the conditions of Theorem 2.1 in \cite{Chen X}, applying this theorem yields 
		\begin{align*}
		\underset{\rm d\in[0,1]^{n}}{\rm{max}}\left\|(I-D+DM)^{-1}\right\|_{\infty}	\leq 10.7081.
	\end{align*}
\end{example}
As illustrated in Fig. \ref{Figure.5.2}, we generated $5000$ diagonal matrices $D$ using MATLAB and computed the corresponding $\left\|(I-D+DM)^{-1}\right\|_{\infty}$. That  $4.1952$ is better than $10.7081$ for max$\left\|(I-D+DM)^{-1}\right\|_{\infty}$.
\begin{figure}[h]
	\begin{minipage}{1.0 \linewidth}
		\centering
		\includegraphics[width=5in]{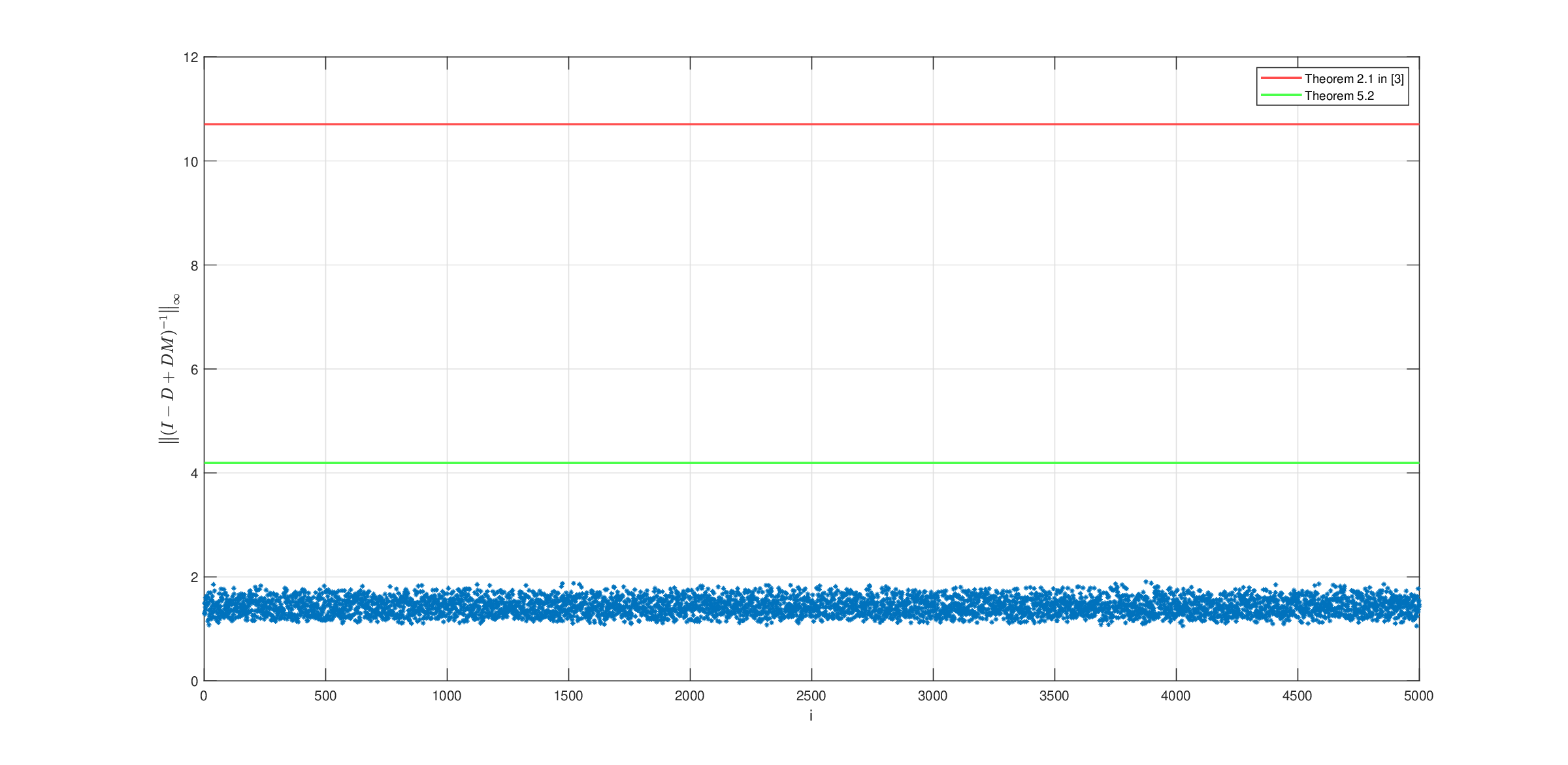}
	\end{minipage}
	\qquad	\caption{	$\left\|(I-D+DM)^{-1}\right\|_{\infty}$     for the 5000 matrices $D$ generated by diag(rand(8,1)).}\label{Figure.5.2}
\end{figure}

\section{The bounds on the determinant for $SDD_{1}$ matrices} \label{section.6}

\qquad The determinants of matrices are widely used in Accurate Computation\cite{HuangRong,Orera,Huang rong}. When the order of the matrix is too large, it is terrible to calculate the determinant directly. Therefore, scholars usually give the upper and lower bounds  of the determinant, and the common effective way is to use the elements of the original matrix to give this estimate. In this section, we will provide the   new determinant upper and lower bounds of the $SDD_{1}$ matrix by Schur complement. An interesting fact is that in 2005, before the $SDD_{1}$ matrix was formally proposed, Huang had already given the upper and lower bounds of the determinant for matrices with such a structure. We summarize them as follows:

\begin{lem}\rm  \label{Lemma 4.1}\cite{Htz} Let $A=(a_{ij})\in \mathbb C^{n\times n}$ be an $SDD_{1}$ matrix, then
	\begin{align}
		\prod\limits_{i=1}^n l_{i}\leq |\det(A)|\leq 	\prod\limits_{i=1}^nu_{i},
	\end{align}\label{eq 4.1}
	where \vspace{-2ex}
	\begin{align*}
		l_{i}=|a_{ii}|-\frac{1}{x_{i}}\sum_{j=i+1}^{n}|a_{ij}|x_{j},\quad u_{i}=|a_{ii}|+\frac{1}{x_{i}}\sum_{j=i+1}^{n}|a_{ij}|x_{j},\quad a_{n,n+1}=0,
	\end{align*}
	
	\begin{align*}
		x_{i}=&\left\{\begin{aligned}\notag
			&\qquad  1, \qquad i\in N_{1}, \\
			\thinspace
			&\theta +\frac{R_{i}(A)}{|a_{ii}|},i\in N_{2},
		\end{aligned}\right.  \quad {\text{and}} \quad \theta=\underset{j\in N_{1}}{\rm min}\frac{|a_{jj}|-P_{j}(A)}{R^{N_{2}}_{j}(A)}.
	\end{align*}
	
\end{lem}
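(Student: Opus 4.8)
\textbf{Proof proposal for Lemma \ref{Lemma 4.1}.}

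The plan is to exploit the Schur-complement determinant identity (Schur's formula, Lemma \ref{Lemma 2.4}) recursively, peeling off one leading $1\times 1$ block at a time, and at each step control the ratio of the new pivot to a prescribed positive weight $x_i$. Concretely, I would first reorder the rows/columns (by a permutation similarity, which leaves $|\det A|$ unchanged) so that every index of $N_2$ precedes every index of $N_1$; this is exactly the ordering hinted at in the statement, and it is what lets the weights $x_i$ be chosen consistently. Then I would argue by induction on $n$: write $A$ in block form with $A(\{1\})=(a_{11})$ the first diagonal entry, so that by Schur's formula $\det A = a_{11}\cdot\det(A/\{1\})$, and it suffices to show that $A/\{1\}$ is again an $SDD_1$ matrix (of size $n-1$) to which the inductive hypothesis applies, together with the pivot estimate $l_1 \le |a_{11}| - |a_{1\cdot}A(\{1\})^{-1}a_{\cdot 1}|$ built into $l_1$ and $u_1$. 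For the ``$N_2$-first'' part of the ordering this hereditary property is precisely Theorem \ref{Theorem 3.1}/\ref{Theorem 3.2} (the Schur complement with respect to a subset of $N_2$ stays $SDD_1$, even gains diagonal dominance), and for the ``$N_1$-tail'' the Schur complement of an SDD matrix is SDD, hence $SDD_1$.

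The heart of the matter is the weight bookkeeping: I would show that the fictitious vector $y$ used throughout Section \ref{section.3} (the ``prior construction'') can be taken componentwise equal to the prescribed $x_i$'s, i.e.\ $x_i=1$ on $N_1$ and $x_i=\theta+R_i(A)/|a_{ii}|$ on $N_2$, and that with this choice the bound $|a'_{tu}|\le |a_{j_tj_u}| + \Delta_{j_tj_u}$ from \eqref{eq3.2} propagates so that the off-diagonal sum $\sum_{j>i}|a_{ij}|x_j$ in $u_i$ dominates the corresponding quantity $\sum_{j_u}|a'_{tu}|x_{j_u}$ for the Schur complement, while the pivot $|a'_{tt}|$ stays at least $l_i$. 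In other words, I would establish by induction the uniform inequality $l_i \le a'^{(i)}_{11} \le u_i$ for the $i$-th successive pivot $a'^{(i)}_{11}$, where the lower estimate uses Lemma \ref{Lemma 3.1} (bounding $<A(\alpha)>^{-1}$ applied to the vector of partial row sums by the vector $(R_{i_s}(A)/|a_{i_si_s}|)$, which is exactly $x_{i_s}-\theta \le x_{i_s}$ on $N_2$) and Lemma \ref{Lemma 2.5}, and the upper estimate is the mirror-image computation. The choice $\theta=\min_{j\in N_1}(|a_{jj}|-P_j(A))/R^{N_2}_j(A)$ is precisely what guarantees the $N_1$-pivots stay positive after the $N_2$-block has been eliminated, so the lower bound $l_i>0$ is nontrivial exactly when this $\theta$ is used; I would check this positivity separately, splitting on $i\in N_1$ versus $i\in N_2$ and invoking the $SDD_1$ inequality \eqref{eq 2.1} for the base cases.

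The main obstacle I anticipate is \emph{not} the determinant identity itself but the two-sided propagation of the weight inequalities through the Schur complement: one must verify that after eliminating a row, the updated partial row sums $\sum_{j_u>j_t}|a'_{tu}|x_{j_u}$ are still controlled by $\big(\text{something}\le u_{j_t}-|a_{j_tj_t}|\big)x_{j_t}$ and symmetrically from below, and this requires carefully tracking which indices have already been eliminated (hence moved from the ``active'' row-sum to the $\Delta$-correction term) and re-deriving at each stage a Lemma \ref{Lemma 3.1}-type estimate with the shrinking index set. A clean way to organize this is to prove the stronger statement: for every leading principal block $\alpha=\{1,\dots,k\}$ (in the reordered matrix), the $t$-th diagonal entry of $A/\alpha$ satisfies $l_{k+t}\le$ (that entry) and the weighted row sum satisfies $\sum_{u>t}|a'_{tu}|x_{j_u}\le (u_{k+t}-|a'_{tt}|)\,x_{j_t}/\,(\text{appropriate factor})$, proved by a single induction on $k$ using \eqref{eq3.2} and Lemma \ref{Lemma 3.1}; once that invariant is in hand, the product bounds $\prod l_i\le|\det A|\le\prod u_i$ follow by multiplying the per-step estimates $l_i\le a'^{(i)}_{11}\le u_i$ and using $\det A=\prod_i a'^{(i)}_{11}$.
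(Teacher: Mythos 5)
You should first note that the paper does not prove Lemma \ref{Lemma 4.1} at all: it is quoted from Huang and Liu \cite{Htz}, and the paper's own new determinant result is the order-restricted Theorem \ref{Theorem modify1}. Measured against the statement you were asked to prove, your plan has a genuine gap at the very first step: the quantities $l_i$ and $u_i$ depend on the original ordering of the indices through the partial sums $\sum_{j=i+1}^{n}|a_{ij}|x_j$, and the lemma carries no ordering hypothesis whatsoever. If you begin by permuting the matrix so that $N_2$ precedes $N_1$, you will at best prove a bound in terms of partial row sums taken in the \emph{permuted} order, and such products are in general incomparable with $\prod_i l_i$ and $\prod_i u_i$ for the original order; a permutation similarity preserves $|\det A|$ but not the $l_i,u_i$. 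What your peeling strategy actually delivers is an order-restricted statement of the type of Theorem \ref{Theorem modify1} (which the paper proves by exactly this successive Schur-complement argument, under the $D_1$ ordering (\ref{eq 4.2}) and with different weights $y_i$), not Lemma \ref{Lemma 4.1} as stated. A second, related problem is your first formulation of the induction: the Schur complement $A/\{1\}$ has its own sets $N_1$, $N_2$, its own $\theta$ and hence its own $x_i$, so the inductive hypothesis yields bounds in different weights that cannot simply be multiplied with $l_1$ and $u_1$; you notice this and retreat to a fixed-weight invariant, but you never supply the estimate that would make that invariant close when rows of $N_1$ are being eliminated (Lemma \ref{Lemma 3.1} only applies for $\alpha\subseteq N_2$, and the analogue (\ref{eq 3.10}) requires $\alpha\supseteq N_2$, so the bookkeeping you flag as ``the main obstacle'' is genuinely unresolved in the proposal).

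The missing simple idea is diagonal scaling, which is how the cited result is actually obtained and which works in the original order. Put $X=\mathrm{diag}(x_1,\dots,x_n)$ with the $x_i>0$ of the statement and set $B=AX$, so $b_{ij}=a_{ij}x_j$. The definition of $\theta$ is precisely what makes $B$ strictly diagonally dominant: for $i\in N_1$ the needed inequality is $|a_{ii}|>R^{N_1}_i(A)+Q^{N_2}_i(A)+\theta R^{N_2}_i(A)=P_i(A)+\theta R^{N_2}_i(A)$, which is the $SDD_1$ condition together with the choice of $\theta$ (replace $\theta$ by $\theta-\varepsilon$ and let $\varepsilon\to 0$ to handle the minimizing row), while for $i\in N_2$ it follows from $|a_{ii}|>R_i(A)$ and $R_j(A)/|a_{jj}|<1$. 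Then the classical two-sided determinant (equivalently, Gaussian-pivot) bounds for SDD matrices, $\prod_i\bigl(|b_{ii}|-\sum_{j>i}|b_{ij}|\bigr)\le|\det B|\le\prod_i\bigl(|b_{ii}|+\sum_{j>i}|b_{ij}|\bigr)$, applied to $B$ and divided by $\det X=\prod_i x_i$, give exactly $\prod_i l_i\le|\det A|\le\prod_i u_i$, with no reordering and no step-by-step re-derivation of Lemma \ref{Lemma 3.1}-type estimates.
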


We say that the index set $N$ of matrix $A$ admits a $D_{1}$ ordering if it is partitioned as 
\begin{align}
	N_{1}=\{1,2,\dots, s\},\qquad N_{2}=\{s+1,s+2,\dots, n\},\label{eq 4.2}
\end{align}
where $0<s<n$. Obviously, for any matrix $A$, there exists a permutation $P$ such that the index set of  $P^{T}AP$ satisfies (\ref{eq 4.2}).

\begin{theorem}\rm \label{Theorem modify1}
		Let $A=(a_{ij})\in \mathbb C^{n\times n}$ be an $SDD_{1}$ matrix, and the index set $N$ of $A$ satisfies (\ref{eq 4.2}). Then
		\begin{align}
		\prod\limits_{i=1}^n f_{i}\leq |\det(A)|\leq 	\prod\limits_{i=1}^n g_{i},\label{eq 4.3}
	\end{align}
	where \vspace{-2ex}
		\begin{align*}
		f_{i}=|a_{ii}|-\sum_{j=i+1}^{n}|a_{ij}|y_{j},\quad g_{i}=|a_{ii}|+\sum_{j=i+1}^{n}|a_{ij}|y_{j},\quad a_{n,n+1}=0,
	\end{align*}
	and
		\begin{align*}
		y_{i}=&\left\{\begin{aligned}\notag
			&  \frac{P_{i}(A)}{|a_{ii}|}, \qquad i\in \{1,2,\dots, s\}, \\
			\thinspace
			&\frac{R_{i}(A)}{|a_{ii}|},\quad i\in \{s+1,\dots, n\}.
		\end{aligned}\right.
	\end{align*}
	\end{theorem}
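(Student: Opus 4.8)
The plan is to perform Gaussian elimination from the bottom up, tracking how the diagonal-dominance structure propagates, and to bound each pivot using the Schur-complement estimates already established. Since the index set satisfies $N_{1}=\{1,\dots,s\}$ and $N_{2}=\{s+1,\dots,n\}$, eliminating the last variable first means we are forming Schur complements with respect to tail index sets $\alpha_{k}=\{k+1,k+2,\dots,n\}$. For $k\geq s$ we have $N_{2}\subseteq \alpha_{k}$, so by the Corollary following Theorem~\ref{Theorem 3.2} (and Theorem~\ref{Theorem 3.2} itself when $\alpha_{k}=N_{2}$), each such Schur complement $A/\alpha_{k}$ is an SDD matrix; for $k<s$ we stay inside the regime where $A/\alpha_{k}$ is $SDD_{1}$. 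Writing $|\det(A)|=\prod_{i=1}^{n}|p_{i}|$ where $p_{i}$ is the $i$-th pivot (equivalently, the $(1,1)$-entry of $A/\{i+1,\dots,n\}$, with $p_{n}=a_{nn}$), it suffices to show $f_{i}\leq |p_{i}|\leq g_{i}$ for each $i$, and then take the product.

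First I would set up the induction on the elimination step. For the top block ($i\leq s$, i.e. $i\in N_{1}$), the pivot $p_{i}=a'_{11}$ is the leading entry of $A/\alpha_{i}$ with $\alpha_{i}=\{i+1,\dots,n\}\supseteq N_{2}$; applying inequality (\ref{eq3.2}) gives $|a_{ii}|-\Delta_{ii}\leq |p_{i}|\leq |a_{ii}|+\Delta_{ii}$, where $\Delta_{ii}=|a_{i\alpha_{i}}|\langle A(\alpha_{i})\rangle^{-1}|a_{\alpha_{i}i}|$. The key is to bound $\Delta_{ii}$ by $\sum_{j=i+1}^{n}|a_{ij}|y_{j}$: the column vector $|a_{\alpha_{i}i}|$ has entries $|a_{ji}|\leq R^{\alpha_{i}}_{j}(A)\leq R^{\overline{\alpha_{i}}}_{j}(A)$... more precisely I would apply the prior-construction Lemma~\ref{Lemma 3.1}-style argument: since $\alpha_{i}$ contains all of $N_{2}$ together with the $N_{1}$-indices beyond $i$, I invoke the estimate (\ref{eq 3.10}) from the proof of the last unnumbered theorem in Section~\ref{section.3}, which gives $\langle A(\alpha_{i})\rangle^{-1}(\dots)^{T}\leq (\dots,P_{j}(A)/|a_{jj}|,\dots,1,\dots)^{T}$ componentwise, i.e. exactly the vector $(y_{j})_{j\in\alpha_{i}}$. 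Hence $\Delta_{ii}\leq \sum_{j=i+1}^{n}|a_{ij}|y_{j}$, which yields $f_{i}\leq|p_{i}|\leq g_{i}$. For the bottom block ($i>s$, $i\in N_{2}$), $\alpha_{i}=\{i+1,\dots,n\}\subseteq N_{2}$, so Lemma~\ref{Lemma 3.1} applies directly with $x=|a_{\alpha_{i}i}|$ (noting $|a_{ji}|\leq R^{\overline{\alpha_{i}}}_{j}(A)$), giving $\langle A(\alpha_{i})\rangle^{-1}|a_{\alpha_{i}i}|\leq (R_{j}(A)/|a_{jj}|)_{j\in\alpha_{i}}^{T}=(y_{j})_{j\in\alpha_{i}}^{T}$, and again $\Delta_{ii}\leq\sum_{j=i+1}^{n}|a_{ij}|y_{j}$.

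Then I would verify positivity of the lower bounds $f_{i}$, so that the product inequality is meaningful and sign-consistent: for $i\in N_{2}$, $f_{i}=|a_{ii}|-\sum_{j>i}|a_{ij}|\cdot R_{j}(A)/|a_{jj}|\geq |a_{ii}|-Q^{N_{2}}_{i}(A)\geq |a_{ii}|-P_{i}(A)>0$; for $i\in N_{1}$, $f_{i}=|a_{ii}|-\sum_{j>i}|a_{ij}|y_{j}\geq |a_{ii}|-R^{N_{1}\cap\{i+1,\dots\}}_{i}(A)-Q^{N_{2}}_{i}(A)\geq |a_{ii}|-P_{i}(A)>0$, using $y_{j}\leq 1$ for $j\in N_{1}$ and $y_{j}=R_{j}(A)/|a_{jj}|$ for $j\in N_{2}$. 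Finally, multiplying $f_{i}\leq|p_{i}|\leq g_{i}$ over $i=1,\dots,n$ and using $|\det(A)|=\prod|p_{i}|$ (a consequence of Schur's formula, Lemma~\ref{Lemma 2.4}, applied recursively) gives (\ref{eq 4.3}).

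The main obstacle is the precise bookkeeping in the upper block: when $i\in N_{1}$ but $\alpha_{i}$ contains both the leftover $N_{1}$-indices $\{i+1,\dots,s\}$ and all of $N_{2}$, one must show that $\langle A(\alpha_{i})\rangle^{-1}$ applied to the relevant right-hand side is dominated componentwise by the vector whose $N_{1}$-part is $1$ and whose $N_{2}$-part is $R_{j}(A)/|a_{jj}|$ — this is exactly the content of the prior-construction inequality (\ref{eq 3.11})–(\ref{eq 3.10}), but one has to check that the right-hand side vector feeding into $\Delta_{ii}$ (namely $|a_{\alpha_{i}i}|$) is indeed dominated entrywise by $(R^{\overline{\alpha_{i}}}_{j}(A))_{j}$, which follows because $i\in\overline{\alpha_{i}}$. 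Once that domination is in place the rest is routine; the only other care needed is confirming that the off-by-one indexing ($a_{n,n+1}=0$, $p_{n}=a_{nn}$) and the two cases $i=s$ versus $i>s$ knit together without gap.
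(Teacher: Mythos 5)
Your proposal is correct and follows essentially the same route as the paper's own proof: factor $|\det(A)|$ into pivots via nested tail Schur complements (Lemma \ref{Lemma 2.4}), bound each pivot by $|a_{ii}|\pm\Delta_{ii}$ as in (\ref{eq3.2}), and dominate $\Delta_{ii}$ by $\sum_{j=i+1}^{n}|a_{ij}|y_{j}$ using Lemma \ref{Lemma 3.1} for $i\geq s$ and (\ref{eq 3.10}) for $i<s$ (your explicit positivity check of the $f_{i}$ is a welcome addition that the paper only records in a remark). Two small slips to fix: the $i$-th pivot is the $(i,i)$-entry of $A/\{i+1,\dots,n\}$ rather than its $(1,1)$-entry, and the dominating vector you need from (\ref{eq 3.10}) is $\left(P_{j}(A)/|a_{jj}|\right)_{j>i}\leq (y_{j})_{j>i}$, not the looser vector with $N_{1}$-part equal to $1$ mentioned in your last paragraph, which would only yield a weaker bound than the stated $f_{i}$, $g_{i}$.
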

	\begin{proof}
			Denote $\alpha_{n}=\{1,2,\dots,n\}$, $\alpha_{n-1}=\{2,3,\dots,n\}$, \dots , $\alpha_{1}=\{n\}$, $\alpha_{0}=\emptyset$.  By applying  Lemma \ref{Lemma 2.5}, we deduce that \vspace{-1.2ex}
		\begin{align*}
			\det(A)&=\det\left(A(\alpha_{n-1})\right)\times\det\left(A/A(\alpha_{n-1})\right)\\ \notag
			&=\det\left(A(\alpha_{n-2})\right)\times\det\left(A(\alpha_{n-1})/A(\alpha_{n-2})\right)\times\det\left(A/A(\alpha_{n-1})\right)\\ \notag
			&\enspace \vdots \\ \notag
			&=\det\left(A(\alpha_{1})\right)\times\det\left(A(\alpha_{2})/A(\alpha_{1})\right)\times\dots\times\det\left(A/A(\alpha_{n-1})\right).\tag{42} \label{modify1}
		\end{align*}
		For  $i\in \{1,2,\dots, s-1\}$, we have
		\begin{align*}
			&|\det\left(A(\alpha_{n-i+1})/A(\alpha_{n-i})\right)|\\ \notag
			=&|A(\alpha_{n-i+1})/A(\alpha_{n-i})|\\  \notag
			=&|a_{ii}-a_{i\alpha_{n-i}}(A(\alpha_{n-i}))^{-1}a_{\alpha_{n-i}i}|\\  \notag
			\geq &|a_{ii}|-|a_{i\alpha_{n-i}}|<A(\alpha_{n-i})>^{-1}|a_{\alpha_{n-i}i}|\\ \notag 
			\geq &|a_{ii}|-|a_{i\alpha_{n-i}}|(\frac{P_{i+1}(A)}{|a_{i+1i+1}|},\dots, \frac{P_{n}(A)}{|a_{nn}|})^{T}(\text {\rm By (\ref{eq 3.10})})\\ \notag
			=&|a_{ii}|-\sum_{j=i+1}^{n}|a_{ij}|\frac{P_{j}(A)}{|a_{jj}|}\\ \notag
			\geq&|a_{ii}|-\sum_{j=i+1}^{s}|a_{ij}|\frac{P_{j}(A)}{|a_{jj}|}-\sum_{j=s+1}^{n}|a_{ij}|\frac{R_{j}(A)}{|a_{jj}|}. \tag{43}\label{modify2}
		\end{align*}
Conversely, 
		\begin{align*}
			&|\det\left(A(\alpha_{n-i+1})/A(\alpha_{n-i})\right)|\\ 
			\leq &|a_{ii}|+|a_{i\alpha_{n-i}}|<A(\alpha_{n-i})>^{-1}|a_{\alpha_{n-i}i}|\\
			\leq &|a_{ii}|+\sum_{j=i+1}^{n}|a_{ij}|\frac{P_{j}(A)}{|a_{jj}|}\\
			\leq &|a_{ii}|+\sum_{j=i+1}^{s}|a_{ij}|\frac{P_{j}(A)}{|a_{jj}|}+\sum_{j=s+1}^{n}|a_{ij}|\frac{R_{j}(A)}{|a_{jj}|}.\tag{44} \label{modify2.1}
			\end{align*}
 Similarly, for  $i\in \{s,s+1,\dots,n\}$, we have
		\begin{align*}
			&|\det\left(A(\alpha_{n-i+1})/A(\alpha_{n-i})\right)|\\  \notag
			\geq &|a_{ii}|-|a_{i\alpha_{n-i}}|<A(\alpha_{n-i})>^{-1}|a_{\alpha_{n-i}i}|\\ \notag
			\geq &|a_{ii}|-|a_{i\alpha_{n-i}}|(\frac{|a_{i+1i}|+Q^{\alpha_{n-i}}_{i+1}(A)}{|a_{i+1i+1}|},\dots,\frac{|a_{ni}|+Q^{\alpha_{n-i}}_{n}(A)}{|a_{nn}|})^{T}(\text {\rm By  Lemma \ref{Lemma 3.1}} )\\ \notag
			\geq &|a_{ii}|-|a_{i\alpha_{n-i}}|(\frac{R_{i+1}(A)}{|a_{i+1i+1}|},\dots, \frac{R_{n}(A)}{|a_{nn}|})^{T}\\ \notag
			=&|a_{ii}|-\sum_{j=i+1}^{n}|a_{ij}|\frac{R_{j}(A)}{|a_{jj}|}.\tag{45}\label{modify3}
		\end{align*}
	Conversely, 
			\begin{align*}
			&|\det(A(\alpha_{n-i+1})/A(\alpha_{n-i}))|\\ 
			\leq &|a_{ii}|+|a_{i\alpha_{n-i}}|<A(\alpha_{n-i})>^{-1}|a_{\alpha_{n-i}i}|\\ 
			\leq &|a_{ii}|+|a_{i\alpha_{n-i}}|(\frac{R_{i+1}(A)}{|a_{i+1i+1}|},\dots, \frac{R_{n}(A)}{|a_{nn}|})^{T}\\ 
			=&|a_{ii}|+\sum_{j=i+1}^{n}|a_{ij}|\frac{R_{j}(A)}{|a_{jj}|}.\tag{46}\label{modify3.1}
		\end{align*}		
		Combine (\ref{modify2})---(\ref{modify3.1})  to (\ref{modify1}), we have (\ref{eq 4.3}). The proof is completed.
	\end{proof}

The following theorem shows that in  $D_{1}$ ordering, the upper and lower bounds of the determinant of Theorem \ref{Theorem modify1} are superior to Lemma \ref{Lemma 4.1}.
\begin{theorem}\rm 
	Let $A=(a_{ij})\in \mathbb C^{n\times n}$ be an $SDD_{1}$ matrix, and $N$ satisfies (\ref{eq 4.2}). Then
	\begin{align*}
		\prod\limits_{i=1}^n l_{i}\leq 				\prod\limits_{i=1}^n f_{i}\leq |\det(A)|\leq 	\prod\limits_{i=1}^n g_{i} 	\leq 	\prod\limits_{i=1}^nu_{i},
	\end{align*}
	where $l_{i}, u_{i}$ are given as in Lemma \ref{Lemma 4.1}, and $f_{i}, g_{i}$ are given as in Theorem \ref{Theorem modify1}.
\end{theorem}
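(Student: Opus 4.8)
The plan is to observe that the two middle inequalities $\prod_{i=1}^{n} f_i \le |\det(A)| \le \prod_{i=1}^{n} g_i$ are exactly the content of Theorem \ref{Theorem modify1}, so it remains only to establish the two outer comparisons $\prod_{i=1}^{n} l_i \le \prod_{i=1}^{n} f_i$ and $\prod_{i=1}^{n} g_i \le \prod_{i=1}^{n} u_i$. I would obtain these by a termwise argument: if I can show $0 \le l_i \le f_i$ and $0 \le g_i \le u_i$ for every $i \in N$, then, since products of nonnegative reals respect the order, multiplying over $i$ yields the two product inequalities, and chaining with Theorem \ref{Theorem modify1} gives the full chain.

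First I would dispose of the nonnegativity. Since $A$ is $SDD_{1}$ one has $P_j(A)/|a_{jj}| < 1$ and $R_j(A)/|a_{jj}| < 1$, and $\theta \ge 0$ because $|a_{jj}| > P_j(A)$ and $R^{N_2}_j(A) \ge 0$ for $j \in N_1$; hence $x_i > 0$ whenever the corresponding tail sum is nonzero (if $R_i(A) = 0$ that sum is empty and the factor is simply $|a_{ii}|$, so the apparent $1/x_i$ causes no trouble). A crude estimate, replacing every $y_j$ by $1$ in the $N_1$-part and keeping the $N_2$-part as $Q^{N_2}_i(A)$, gives $f_i > |a_{ii}| - P_i(A) > 0$ for $i \in N_1$ and $f_i > |a_{ii}| - R^{N_2}_i(A) > 0$ for $i \in N_2$, while $g_i, u_i > 0$ is immediate; and $l_i \ge 0$ follows for $i \in N_1$ from the very choice of $\theta$ (so that $\theta R^{N_2}_i(A) \le |a_{ii}| - P_i(A)$) and for $i \in N_2$ from $R^{N_2}_i(A) \le R_i(A) < |a_{ii}|$ together with $R_j(A)/|a_{jj}| < 1$. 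Next, both termwise comparisons $l_i \le f_i$ and $g_i \le u_i$ are algebraically equivalent to the \emph{same} inequality $\sum_{j=i+1}^{n} |a_{ij}|\, y_j \le \frac{1}{x_i}\sum_{j=i+1}^{n} |a_{ij}|\, x_j$, which, since $|a_{ij}| \ge 0$, follows as soon as $x_j/x_i \ge y_j$ for every $j > i$.

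Thus the heart of the proof is the pointwise estimate $x_j/x_i \ge y_j$ for $j > i$, and here the $D_1$ ordering (\ref{eq 4.2}) is crucial: if $i \in N_2$ then $j > i$ forces $j \in N_2$. I would argue by cases. If $i \in N_1$ then $x_i = 1$, and for $j \in N_1$ one has $x_j/x_i = 1 > P_j(A)/|a_{jj}| = y_j$, while for $j \in N_2$ one has $x_j/x_i = \theta + R_j(A)/|a_{jj}| \ge R_j(A)/|a_{jj}| = y_j$ because $\theta \ge 0$. If $i \in N_2$ then $j \in N_2$ as well, $x_i = \theta + r_i$ with $r_i := R_i(A)/|a_{ii}| < 1$, and $x_j = \theta + y_j$ with $y_j := R_j(A)/|a_{jj}| \in [0,1)$; Lemma \ref{Lemma 2.5} applied with $a = 1$, $b = y_j$, $c = \theta$ gives $\frac{y_j + \theta}{1 + \theta} \ge y_j$, and since $r_i \le 1$ we conclude $\frac{x_j}{x_i} = \frac{y_j + \theta}{r_i + \theta} \ge \frac{y_j + \theta}{1 + \theta} \ge y_j$. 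This last case (together with the incidental handling of degenerate rows where $R_i(A) = 0$ or $R_j(A) = 0$, so that the relevant sum is empty or $y_j = 0$) is the only mildly delicate point; everything else is bookkeeping. Collecting the cases gives $x_j/x_i \ge y_j$ for all $j > i$, hence $l_i \le f_i$ and $g_i \le u_i$ termwise with all factors nonnegative, and multiplying these together with Theorem \ref{Theorem modify1} completes the proof.
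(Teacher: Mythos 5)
Your proposal is correct and follows essentially the same route as the paper: a termwise comparison $l_i \le f_i$ and $g_i \le u_i$, reduced to the pointwise estimate $x_j/x_i \ge y_j$ and settled for rows $i \in N_2$ via Lemma \ref{Lemma 2.5} (the paper splits into the cases $R_i(A)/|a_{ii}| > R_j(A)/|a_{jj}|$ and $R_i(A)/|a_{ii}| \le R_j(A)/|a_{jj}|$, whereas you compare against the denominator $1+\theta$ in a single step). Your explicit check that all factors involved are nonnegative is a detail the paper leaves implicit but which is indeed needed to pass from the termwise inequalities to the product inequalities.
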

\begin{proof}Let us prove it by discussing $i$ in two cases:\\
	When $i\in \{1,2,\dots,s\}$, we have
	\begin{align*}
		f_{i}-l_{i}=\sum_{j=i+1}^{s}|a_{ij}|(1-\frac{P_{j}(A)}{|a_{jj}|})+\sum_{j=s+1}^{n}|a_{ij}|(\theta +\frac{R_{j}(A)}{|a_{jj}|}-\frac{R_{j}(A)}{|a_{jj}|})\geq 0. 
	\end{align*}
	When $i\in \{s+1,s+2,\dots, n\}$, we have
	\begin{align*}
		f_{i}-l_{i}=\sum_{j=i+1}^{n}|a_{ij}|(\frac{\theta +\frac{R_{j}(A)}{|a_{jj}|}}{\theta+\frac{R_{i}(A)}{|a_{ii}|} }-\frac{R_{j}(A)}{|a_{jj}|}). 
	\end{align*}	
	If $\frac{R_{i}(A)}{|a_{ii}|}> \frac{R_{j}(A)}{|a_{jj}|}$, make use of Lemma \ref{Lemma 2.4}, we obtain that 
	\begin{align*}
		\frac{\theta+ \frac{R_{j}(A)}{|a_{jj}|}}{\theta +\frac{R_{i}(A)}{|a_{ii}|}}\geq \frac{ \frac{R_{j}(A)}{|a_{jj}|}}{ \frac{R_{i}(A)}{|a_{ii}|}}\geq \frac{R_{j}(A)}{|a_{jj}|}.
	\end{align*}
	If $\frac{R_{i}(A)}{|a_{ii}|}\leq  \frac{R_{j}(A)}{|a_{jj}|}$, we have
	\begin{align*}
		\frac{\theta+ \frac{R_{j}(A)}{|a_{jj}|}}{\theta +\frac{R_{i}(A)}{|a_{ii}|}} \geq 1 >\frac{R_{j}(A)}{|a_{jj}|}.
	\end{align*}
	In combination with the above steps, we have $f_{i}-l_{i}\geq 0$. By the arbitrariness of $i$, 	$	\prod\limits_{i=1}^n l_{i}\leq 				\prod\limits_{i=1}^n f_{i}$. Similarly,  we have $	\prod\limits_{i=1}^n g_{i} 	\leq 	\prod\limits_{i=1}^nu_{i}$. The proof is completed.
\end{proof}

\begin{example}\rm Consider the $SDD_{1}$  matrix 
	\begin{align*}
		A=\left(
		\begin{array}{cccccc}
			1.5 & 0.3 & 0.3 & 0.6 & 0.3 & 0.3 \\
		0.3 & 1.5 & 0.3 & 0.3 & 0.6 & 0.3 \\ 
		0.3 & 0 & 1.5 & 0.6 & 0 & 0.6 \\
		0.3 & 0 & 0.3 & 3 & 0 & 0 \\
		0.3 & 0 & 0 & 0 & 1.5 & 0.3 \\
		0 & 0 & 0.3 & 0 & 0 & 1.5 
		\end{array}
		\right).	
		\end{align*}
		We have $N_{1}=\{1,2,3\}$, $N_{2}=\{4,5,6\}$. Applying  Lemma \ref{Lemma 4.1} yields 
		\begin{align*}
			0.099\leq |\det(A)|\leq 125.8959.
		\end{align*}
		By Theorem \ref{Theorem 6.1}, we have
			\begin{align*}
			7.5835\leq |\det(A)|\leq 50.4812.
		\end{align*}
		The exact value  $\det(A)=17.6899$.
	\end{example}

\begin{example}\rm \label{example4} Consider the $SDD_{1}$ matrix
	\begin{align*}
	A=\left(
	\begin{array}{cccccc}
3 & 0 & 1 & 2 & 0 & 2 \\
1 & 2 & 0 & 1 & 0 & 0 \\
1 & 0 & 2 & 1 & 0 & 0 \\
0 & 1 & 0 & 3 & 0 & 0 \\
0 & 0 & 0 & 0 & 3 & 1 \\
0 & 0 & 0 & 1 & 0 & 3 
	\end{array}
	\right).	
\end{align*}
		Then $N_{1}=\{1,2,3\}$ and $N_{2}=\{4,5,6\}$. We  calculate the following values:
		\begin{align*}
			&R_{1}(A) = 5, \quad R_{2}(A) = 2, \quad R_{3}(A) = 2, \\
			&R_{4}(A) = 1, \quad R_{5}(A) = 1, \quad R_{6}(A) = 1, 
		\end{align*}
and $\theta=\frac{1}{6}$. 		Therefore, we have
		\begin{align*}
			l_{1}=3-1-(\frac{1}{6}+\frac{1}{3})\times2-(\frac{1}{6}+\frac{1}{3})\times 2=0.
		\end{align*}
		Applying  Lemma \ref{Lemma 4.1} yields 
\begin{align*}
	0\leq |\det(A)|\leq 1311.3.
\end{align*}
By Theorem \ref{Theorem 6.1}, we have
\begin{align*}
66.7\leq |\det(A)|\leq816.7.
\end{align*}
The exact value $\det(A)=240$.
\end{example}

\begin{remark}\rm 
From Example \ref{example4}, we observe that when the $SDD_{1}$ matrix $A$ is subjected to more adverse conditions, the determinant lower bound produced by Lemma \ref{Lemma 4.1} may degenerate to $0$. However,  for any $i\in N$, \vspace{-0.5em}
\begin{align*}
		f_{i}=&|a_{ii}|-\sum_{j=i+1}^{n}|a_{ij}|y_{j}\geq |a_{ii}|-P_{i}(A)>0,
\end{align*}
	the lower bound derived by Theorem \ref{Theorem modify1} is guaranteed to be strictly positive. This conclusively demonstrates the superiority of Theorem \ref{Theorem modify1}  over Lemma  \ref{Lemma 4.1} .
\end{remark}

\thinspace

\noindent \textbf{Acknowledgements:} The authors are thankful to the editors and the anonymous  referees for their valuable comments to improve the paper.\\

\noindent \textbf{Funding:}  The work is supported  partly by  National Key Research and Development Program of China (No. 2023YFB3001604), and Postdoctoral Fellowship Program of CPSF (Grant No.
GZC20231534).\\

\noindent \textbf{Conflict of Interest:} All authors declare that they have no conflict of interest.

\bibliography{sn-bibliography}

\section*{}

\end{document}